\theoremstyle{plain}
\newtheorem{theorem}{Theorem}[section]
\newtheorem{lemma}[theorem]{Lemma}
\newtheorem{proposition}[theorem]{Proposition}
\newtheorem{corollary}[theorem]{Corollary}
\newtheorem{problem}[theorem]{Problem}
\theoremstyle{definition}
\newtheorem{definition}[theorem]{Definition}
\newtheorem{algorithm}[theorem]{Algorithm}
\theoremstyle{remark}
\newtheorem{remark}[theorem]{Remark}
\numberwithin{theorem}{section}
\numberwithin{equation}{section}
\newcommand{\R}{\mathbb{R}}
\newcommand{\ve}{\varepsilon}
\newcommand{\id}{\mathrm{id}}
\newcommand{\rank}{\mathrm{rank}}
\newcommand{\qf}{\mathrm{qf}}
\newcommand{\Hess}{\mathrm{Hess}}
\begin{document}

\title{Adaptive Stochastic Gradient Descents on Manifolds with an Application on Weighted Low-Rank Approximation}

\author[1]{Peiqi Yang} 
\author[2]{Conglong Xu} 
\author[3]{Hao Wu}

\thanks{Correspondence to: Hao Wu (\texttt{haowu@gwu.edu})}

\thanks{The authors would like to thank Pierre-Antoine Absil and Yingfeng Hu for interesting and helpful conversations.}

\address{Department of Mathematics, The George Washington University, Phillips Hall, Room 739, 801 22nd Street, N.W., Washington DC 20052, USA. Telephone: 1-202-994-0653, Fax: 1-202-994-6760}

\email{pqyang@gwmail.gwu.edu, xuconglong@gwmail.gwu.edu, haowu@gwu.edu}

\subjclass[2010]{Primary 41A60, 53Z50, 62L20, 68T05}

\keywords{Stochastic gradient descent, manifold, adaptive learning rate, weighted low-rank approximation} 

\begin{abstract}
We prove a convergence theorem for stochastic gradient descents on manifolds with adaptive learning rate and apply it to the weighted low-rank approximation problem.
\end{abstract}

\maketitle

\section{Introduction}\label{sec-intro}

\subsection{Stochastic Gradient Descents with Adaptive Learning Rates in $\R^n$} 
Let $\Omega$ be a probability space. A stochastic gradient descent in $\R^n$ for a $C^1$ cost function $F:\R^n\rightarrow \R$ is given by the iterative update
\begin{equation}\label{eq-iter-SGD-Rn}
\mathbf{x}_{t+1} = \mathbf{x}_t -\eta_t g(\mathbf{x}_t,\omega_t)
\end{equation}
where $g$ is a function $g:\R^n\times \Omega \rightarrow \R^n$ and $\{\omega_t\}_0^\infty \subset \Omega$ is a sequence of random variables of independent identical distribution in $\Omega$ satisfying $E(g(\mathbf{x},\omega_t)) = \nabla F (\mathbf{x})$ $\forall \mathbf{x} \in \R^n$.

In classical applications of stochastic gradient descents, the learning rates $\{\eta_t\}_0^\infty$ are usually a deterministic sequence of non-negative numbers satisfying 
\begin{equation}\label{eq-deterministic-assumption}
\sum_{t=0}^\infty \eta_t = \infty \text{ and } \sum_{t=0}^\infty \eta_t^2 < \infty. 
\end{equation}
More recent applications often adopt adaptive learning rates not satisfying these constraints. In \cite{Li-Orabona:2019}, Li and Orabona provided an elegant proof for the convergence of stochastic gradient descent \eqref{eq-iter-SGD-Rn} with the adaptive learning rates
\begin{equation}\label{eq-iter-SGD-Rn-ada-rate}
\eta_t = \frac{\alpha}{(\beta+\sum_{i=0}^{t-1} \|g(\mathbf{x}_i,\omega_i)\|^2)^{\frac{1}{2}+\ve}},
\end{equation}
where $\alpha$ and $\beta$ are positive numbers and $0 < \ve \le \frac{1}{2}$.

\subsection{Overview and Main Contributions} 
In the current paper, we generalize \cite{Li-Orabona:2019} to stochastic gradient descents on Riemannian manifolds, which leads to Theorem \ref{thm-Ada-SGD-mfd} below. While the core idea of our proof of Theorem \ref{thm-Ada-SGD-mfd} is similar to that in \cite{Li-Orabona:2019}, we need to overcome technical hurdles rooted in three issues:
\begin{enumerate}
	\item It is assumed in \cite{Li-Orabona:2019} that the cost function and its gradient are both globally Lipschitz. This limits the application of the results of \cite{Li-Orabona:2019}. We replace the assumption of global Lipschitzness by a notion of retraction-specific local Lipschitzness. See Definition \ref{def-R-lipschitz} below.
	\item To use the local Lipschitzness, we need to confine the search steps in a compact subset of the manifold. For this purpose, we opt to assume that the random search admits a ``$\kappa$-confinement". See Definition \ref{def-kappa-confinement} below. This also guarantees that the search sequence contains a converging subsequence.
	\item To prove Theorem \ref{thm-Ada-SGD-mfd}, we need to estimate the difference of the gradients of two closely related functions in Lemma \ref{lemma-gradient-difference}. When working on an Euclidean space, these two gradients are identical, which eliminates the need for such an estimation. But, when working on a Riemannian manifold, it took us some effort to control the difference between them.
\end{enumerate}

After resolving these issues, we apply the arguments similar to those in \cite{Li-Orabona:2019} to complete our proof of Theorem \ref{thm-Ada-SGD-mfd}. 

Then, as an application, we apply Theorem \ref{thm-Ada-SGD-mfd} to establish an algorithm for a Regularized Weighted Low-Rank Approximation Problem (Problem \ref{prob-RWLRA-1-reform}). 
We introduced in \cite{Xu-Yang-Wu-CSGD} a deterministic/``semi-adaptive" approach to stochastic gradient descents on manifolds. 
We will compare the computational efficiencies of the adaptive and semi-adaptive approaches when applied to Problem \ref{prob-RWLRA-1-reform}.

\section{A Convergent Theorem}\label{sec-Ada-SGD-mfd}
\subsection{Relevant Concepts}\label{subsec-concepts}

Before stating our main convergence theorem, we need to review some necessary technical concepts and their basic properties. In this manuscript, we only discuss the type of random functions given in Definition \ref{def-random-function}.

\begin{definition}\label{def-random-function}
Let $M$ be a differentiable manifold, and $\Omega$ a probability space. A function $f:M\times \Omega \rightarrow \R$ is called a random function on $M$. We say that $f$ is $k$th-order differentiable in the $M$-direction if, for every $\omega \in \Omega$, $f(\ast,\omega):M \rightarrow \R$ is $k$th-order differentiable. We also say that $f$ is locally bounded on $M$ if, for every compact set $K\subset M$, there is a $C_K>0$ such that $|f(x,\omega)|\leq C_K$ for every $(x,\omega)  \in K \times \Omega$.
\end{definition}

In Definition \ref{def-random-function}, $M$ is the manifold in which we search for the optimal parameters of the learning model. $\Omega$ is the space of samples used to guide our search. 

\begin{definition}\cite[Definition 4.1.1]{AMS}\label{def-retraction} 
Let $M$ be a differentiable manifold. A retraction on $M$ is a $C^1$ map $R:TM \rightarrow M$ such that, for every $x \in M$, the restriction $R_x=R|_{T_x M}$ satisfies
\begin{itemize}
	\item $R_x(\mathbf{0}_x)=x$, where $\mathbf{0}_x$ is the zero vector in $T_x M$,
	\item $dR_x(\mathbf{0}_x)=\id_{T_x M}$ under the canonical identification $T_{\mathbf{0}_x}T_x M \cong T_x M$, where $dR_x$ is the differential of $R_x$ and $\id_{T_x M}$ is the identity map of $T_x M$.
\end{itemize}
\end{definition}

We also need the following concept of retraction-specific Lipschitzness.

\begin{definition}\label{def-R-lipschitz}
Let $M$ be a Riemannian manifold, $R$ a given retraction on $M$, and $\Omega$ a probability space.  Suppose that the random function $f:M \times \Omega \rightarrow \R$ is first-order differentiable in the $M$-direction. Then, for each $x \in M$ and each $\omega \in \Omega$, the function $f_{x,\omega}:=f(R_x(\ast),\omega):T_x M \rightarrow \R$ is a first-order differentiable function. Its gradient $\nabla f_{x,\omega}$ is the vector in $T_x M$ dual to the differential $df_{x,\omega}$ via the Riemannian inner product $\left\langle \ast,\ast\right\rangle_x$.

\begin{itemize}
	\item  $f$ is said to have $R$-Lipschitz gradient in the $M$-direction if there is a constant $C>0$ such that 
	\[
	\|\nabla f_{x,\omega}(\mathbf{v}) - \nabla f_{x,\omega}(\mathbf{0}_x)\|_x \leq C\|\mathbf{v}\|_x
	\]
	for every $x \in M$, $\mathbf{v} \in T_x M$ and $\omega \in \Omega$.
	\item  $f$ is said to have locally $R$-Lipschitz gradient in the $M$-direction if, for every compact subset $K$ of $M$ and every $r>0$, there is a constant $C_{K,r}>0$ such that 
	\[
	\|\nabla f_{x,\omega}(\mathbf{v}) - \nabla f_{x,\omega}(\mathbf{0}_x)\|_x \leq C_{K,r}\|\mathbf{v}\|_x
	\]
	for every $x \in K$, every $\mathbf{v} \in T_x M$ satisfying $\|\mathbf{v}\|_x\leq r$ and every $\omega \in \Omega$.
\end{itemize}
In the case when $\Omega=\{\omega\}$ is a probability space of a single point, the above gives the definitions of the gradient of a deterministic function on $M$ being $R$-Lipschitz and locally $R$-Lipschitz.
\end{definition}

\begin{remark}\label{remark-gradient-coincide}
In Definition \ref{def-R-lipschitz}, denote by $\nabla_M f$ the gradient of $f$ with respenct to $M$, that is, $\nabla_M f(x,\omega) = \nabla f_w(x)$, where $f_\omega := f(\ast,\omega):M \rightarrow \R$. 
Then according to \cite[Remark 2.4]{Xu-Yang-Wu-CSGD}, we have that
\begin{equation}\label{eq-gradient-coincide}
\nabla f_{x,\omega}(\mathbf{0}_x) = \nabla_M f(x,\omega).
\end{equation} 
\end{remark}

To guarantee that there is a compact subset of the manifold that contains all steps of the stochastic gradient descent, we assume that, for some $\kappa>0$, there is a $\kappa$-confinement for the random function used in the stochastic gradient descent. 

\begin{definition}[$\kappa$-confinement]     \label{def-kappa-confinement}
Assume that: 
\begin{enumerate}
	\item $M$ is a Riemannian manifold with a fixed $C^2$ retraction $R:TM\rightarrow M$;
	\item $\Omega$ is a probability space;
	\item $f:M\times \Omega \rightarrow \R$ is a random function on $M$ that is first-order differentiable in the $M$-direction.
\end{enumerate}
For a fixed $\kappa>0$, a  $\kappa$-confinement of $f$ on $M$ is $C^2$ differentiable function $\rho: M \rightarrow \R$ satisfying:
\begin{itemize}
	\item For every $c\in \R$, the set $\{x \in M~\big{|}~ \rho(x)\leq c\}$ is compact;
	\item There exist $\rho_0,\rho_1\in \R$ such that 
	\begin{itemize}
	\item $\rho_0<\rho_1$, 
	\item	\begin{equation}\label{eq-def-confinement-1}
  \rho(R_{x}(-s\nabla_M f(x,\omega)))\leq \rho_1
	\end{equation}
	for every $s \in [0,\kappa]$, every $\omega \in \Omega$ and every $x \in M$ satisfying $\rho(x)\leq \rho_0$,
	\item  \begin{equation}\label{eq-def-confinement-2}
	\left\langle \nabla \rho (x), \nabla_M f(x,\omega) \right\rangle_x \geq \frac{\kappa}{2} \Big{|} \Hess(\rho \circ R_x)|_{-s \nabla_M f(x,\omega)}(\nabla_M f(x,\omega), \nabla_M f(x,\omega))\Big{|} 
	\end{equation}
	for every $s \in [0,\kappa]$, every $\omega \in \Omega$ and every $x \in M$ satisfying $\rho_0\leq \rho(x)\leq \rho_1$, where $\Hess(\rho\circ R_x)$ is the Hessian of the function $\rho\circ R_x:T_x M \rightarrow \R$, which is defined on the inner product space $T_x M$.
	\end{itemize}
\end{itemize}
\end{definition}

The following lemma explains the origin of the idea of $\kappa$-confinements. 

\begin{lemma}\label{lemma-confinement-construction}
Assume that: 
\begin{enumerate}
	\item $M$ is a Riemannian manifold with a fixed $C^2$ retraction $R:TM\rightarrow M$.
	\item $\Omega$ is a probability space.
	\item $f:M\times \Omega \rightarrow \R$ is a random function on $M$ that is first-order differentiable in the $M$-direction.
	\item $\widetilde{\kappa}>0$.
	\item $\rho: M \rightarrow \R$ is $C^2$ function satisfying:
\begin{itemize}
	\item For every $c\in \R$, the set $\{x \in M~\big{|}~ \rho(x)\leq c\}$ is compact;
	\item There exists a $\rho_0\in \R$ and a $\zeta>0$ such that $\left\langle \nabla \rho (x), \nabla_M f(x,\omega) \right\rangle_x \geq \zeta$ for every $\omega \in \Omega$ and every $x \in M$ satisfying $\rho(x)\geq \rho_0$.
	\end{itemize}
\end{enumerate}
Let 
\begin{enumerate}[(i)]
	\item $K_0  :=  \{x\in M ~\big{|}~ \rho(x) \leq \rho_0\}$, $G_0  :=    \max \{\|\nabla_M f(x,\omega)\|_x ~\big{|}~ x \in K_0, ~\omega \in \Omega\}$,
	\item\label{item-rho-1} \[
\rho_1  :=    \max \{\rho_0 + \widetilde{\kappa} |\left\langle \nabla \rho(x),\mathbf{v}\right\rangle_x| + \frac{1}{2}\widetilde{\kappa}^2|\Hess (\rho \circ R_x)|_{-s\mathbf{v}}(\mathbf{v},\mathbf{v})|~\big{|}~ x \in K_0, ~\mathbf{v} \in T_xM, ~\|\mathbf{v}\|_x \leq G_0, ~ s \in [0,\widetilde{\kappa}] \}, 
  \]
	\item $K_1  :=  \{x\in M ~\big{|}~ \rho(x) \leq \rho_1\}$ and $\widetilde{G}_1  :=    \max \{\|\nabla_M f(x,\omega)\|_x ~\big{|}~ x \in K_1, ~\omega \in \Omega\}$,
	\item \[
\kappa := \min\left\{\widetilde{\kappa}, \frac{2\zeta}{\max\{|\Hess (\rho \circ R_x)|_{-s\mathbf{v}}(\mathbf{v},\mathbf{v})|~\big{|}~x \in K_1, ~\mathbf{v} \in T_xM, ~\|\mathbf{v}\|_x \leq G_1, ~ s \in [0,\widetilde{\kappa}]\}}\right\}
\]
\end{enumerate}
Then $\rho$ is a $\kappa$-confinement of $f$.
\end{lemma}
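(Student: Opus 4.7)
The plan is to verify the three requirements of Definition \ref{def-kappa-confinement} for the constructed $\kappa$. The compactness of sublevel sets is given verbatim by hypothesis (5), so only conditions \eqref{eq-def-confinement-1} and \eqref{eq-def-confinement-2} need to be checked, together with the strict inequality $\rho_0 < \rho_1$.

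First, I would verify \eqref{eq-def-confinement-1}. Fix $x \in M$ with $\rho(x) \leq \rho_0$ (so $x \in K_0$), $\omega \in \Omega$, $s \in [0,\kappa]$, and set $\mathbf{v} := \nabla_M f(x,\omega)$. Applying the second-order Taylor formula with Lagrange remainder to the scalar function $h(t) := (\rho\circ R_x)(-t\mathbf{v})$ on $[0,s]$, and using Remark \ref{remark-gradient-coincide} to identify $\nabla(\rho\circ R_x)(\mathbf{0}_x) = \nabla\rho(x)$, yields
\[
\rho(R_x(-s\mathbf{v})) = \rho(x) - s\,\langle \nabla\rho(x),\mathbf{v}\rangle_x + \tfrac{s^{2}}{2}\,\Hess(\rho\circ R_x)|_{-\tau s\mathbf{v}}(\mathbf{v},\mathbf{v})
\]
for some $\tau \in [0,1]$. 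Since $\rho(x) \leq \rho_0$, $s \leq \widetilde{\kappa}$, $\tau s \in [0,\widetilde{\kappa}]$, and $\|\mathbf{v}\|_x \leq G_0$, bounding the last two terms by their absolute values shows that the right-hand side is majorized by the expression whose max defines $\rho_1$ in (ii), and hence by $\rho_1$.

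Next, I would verify \eqref{eq-def-confinement-2}. If $\rho_0 \leq \rho(x) \leq \rho_1$ then $x \in K_1$, so $\mathbf{v} := \nabla_M f(x,\omega)$ has $\|\mathbf{v}\|_x \leq \widetilde{G}_1$. The lower bound $\langle \nabla\rho(x),\mathbf{v}\rangle_x \geq \zeta$ assumed in hypothesis (5), combined with the choice of $\kappa$ in (iv) (with the symbol ``$G_1$'' there read as $\widetilde{G}_1$, which I take to be a typo), gives
\[
\tfrac{\kappa}{2}\big|\Hess(\rho\circ R_x)|_{-s\mathbf{v}}(\mathbf{v},\mathbf{v})\big| \;\leq\; \tfrac{\kappa}{2}\cdot\tfrac{2\zeta}{\kappa} \;=\; \zeta \;\leq\; \langle \nabla\rho(x),\mathbf{v}\rangle_x
\]
for every $s \in [0,\widetilde{\kappa}]$, which is precisely \eqref{eq-def-confinement-2}.

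The bookkeeping of which compact set governs which step---$K_0$ for \eqref{eq-def-confinement-1} and $K_1$ for \eqref{eq-def-confinement-2}, with $\rho_1$ and $\widetilde{G}_1$ engineered so that one gradient step starting in $K_0$ lands inside $K_1$---is the main conceptual point. The remaining loose ends are that all the maxima in (i)--(iv) are attained and finite, which follows from compactness of $K_0$ and $K_1$, compactness of the constraint sets for $\mathbf{v}$ and $s$, the $C^2$ regularity of $R$ and $\rho$, and the tacit local boundedness of $\nabla_M f$ in both variables; and the strict inequality $\rho_0<\rho_1$, which follows by plugging any $x$ with $\rho(x)=\rho_0$ together with $\mathbf{v}=\nabla_M f(x,\omega)$ into the max defining $\rho_1$, since the second assumption in (5) then contributes at least $\widetilde{\kappa}\,\zeta>0$ beyond $\rho_0$. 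I do not anticipate any serious technical obstacle beyond this Taylor-plus-constant-chase.
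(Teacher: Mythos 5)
Your proof is correct and follows essentially the same route as the paper's: the second-order Taylor expansion of $\rho\circ R_x$ along $-s\nabla_M f(x,\omega)$ (the paper's Lemma \ref{lemma-Taylor}) to establish \eqref{eq-def-confinement-1}, and the chain $\left\langle \nabla\rho(x),\nabla_M f(x,\omega)\right\rangle_x \geq \zeta \geq \frac{\kappa}{2}\big|\Hess(\rho\circ R_x)|_{-s\nabla_M f(x,\omega)}(\nabla_M f(x,\omega),\nabla_M f(x,\omega))\big|$ to establish \eqref{eq-def-confinement-2}. Your added remarks---reading $G_1$ as $\widetilde{G}_1$, the attainment of the maxima, and the strict inequality $\rho_0<\rho_1$---address details the paper's proof leaves implicit.
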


\begin{proof}
By Lemma \ref{lemma-Taylor} below, for any $x\in M$, and any $s \geq 0$, we have that
\[
\rho(R_{x}(-s\nabla_M f(x,\omega))) = \rho(x) -s\left\langle \nabla \rho(x), \nabla_M f(x,\omega)\right\rangle + \frac{1}{2} s^2 \Hess(\rho\circ R_x)|_{-s^\star \nabla_M f(x,\omega)} (\nabla_M f(x,\omega),\nabla_M f(x,\omega)) 
\]
for some $s^\star \in [0,s]$. By the definition of $\rho_1$ in item (\ref{item-rho-1}) above, one can see that $\rho(R_{x}(-s\nabla_M f(x,\omega))) \leq \rho_1$ for any $x \in K_0$ and $s\in[0, \kappa] \subset[0, \widetilde{\kappa}]$. This proves Inequality \eqref{eq-def-confinement-1}.

For every $s \in [0,\kappa]$, every $\omega \in \Omega$ and every $x \in M$ satisfying $\rho_0\leq \rho(x)\leq \rho_1$, we have that
\[
\left\langle \nabla \rho (x), \nabla_M f(x,\omega) \right\rangle_x \geq \zeta \geq  \frac{\kappa}{2} \big{|} \Hess(\rho \circ R_x)|_{-s \nabla_M f(x,\omega)}(\nabla_M f(x,\omega), \nabla_M f(x,\omega))\big{|} .
\]
This proves Inequality \eqref{eq-def-confinement-2}.
\end{proof}

\subsection{The Convergent Theorem}\label{subsec-main-thm}

The following is the main theoretical result of the current paper.

\begin{theorem}\label{thm-Ada-SGD-mfd}
Assume that:
\begin{enumerate}
	\item $M$ is an $m$-dimensional Riemannian manifold equipped with a $C^2$ retraction $R:TM\rightarrow M$. 
	Also, $\left\langle \ast,\ast \right\rangle_x$ is the Riemannian inner product on $T_x M$ for all $x \in M $ and induces the norm $\|\ast\|_x$ on $T_x M$ for all $x \in M$. 
	\item \label{assumption-probability-space} $\Omega$ is a probability space with a probability measure $\mu$. 
	\item \label{assumption-random-function} $f:M \times \Omega \rightarrow \R$ is a random function on $M$ satisfying:
	\begin{enumerate}[(i)]
	  \item $f$ is first-order differentiable in the $M$-direction,
	  \item $f$ and $\|\nabla_M f\|$ are locally bounded on $M$,
		\item $f$ has locally $R$-Lipschitz gradient in the $M$-direction.
	\end{enumerate}
	\item $F:M\rightarrow \R$ is the expectation of $f(x,\omega)$ with respect to $\omega$. That is, $F(x)=E(f(x,\omega))=\int_\Omega f(x,\omega)d\mu$ for all $x\in M$.
	\item \label{assumption-confinement-function} For a fixed $\kappa>0$, $\rho:M\rightarrow \R$ is a $C^2$ $\kappa$-confinement of $f$. Fix $\rho_0, \rho_1\in \R$ for which Inequalities \eqref{eq-def-confinement-1} and \eqref{eq-def-confinement-2} are true. Let $K_1$ be the compact subsets of $M$ given by
	\begin{equation} \label{eq-K-1-def}
  K_1  :=  \left\{x\in M ~\big{|}~ \rho(x) \leq \rho_1 \right\}.
	\end{equation}
	\item $\{\omega_t\}_{t=0}^\infty$ is a sequence of independent random variables taking values in $\Omega$ with identical probability distribution $\mu$. 
	\item \label{assumption-initial value} $x_0 \in K_1$ is a fixed point.
	\item \label{assumption-constants} Fix positive constants $\ve, \alpha, \beta$ satisfying:
  \begin{enumerate}[(i)]
	\item $0<\ve\leq \frac{1}{2}$.
	\item \begin{equation}\label{eq-bound-kappa}
        \frac{\alpha}{\beta^{\frac{1}{2}+\ve}} \leq \kappa.
	\end{equation}
  \end{enumerate}
\end{enumerate}

Define a sequences $\{x_t\}_{t=0}^\infty$ of random elements of $M$ and a sequence $\{\eta_t\}_{t=0}^\infty$ of random positive numbers by 
\begin{equation}\label{eq-def-x-t}
\begin{cases}
x_{t+1} = R_{x_t}\left(-\eta_t \nabla_M f(x_t,\omega_t)\right) \\
\eta_t = \frac{\alpha}{(\beta+\sum_{s=0}^{t-1} \|\nabla_M f(x_s,\omega_s)\|_{x_s}^2)^{\frac{1}{2}+\ve}}
\end{cases}
\text{ for } t\geq 0.
\end{equation}
 Then:
\begin{itemize}
  \item $\{x_t\}_{t=0}^\infty$ is contained in the compact subset $K_1$ of $M$. In particular, it has convergent subsequences.
	\item $\{\|\nabla F(x_t)\|_{x_t}\}_{t=0}^\infty$ converges almost surely to $0$.
	\item Any limit point of $\{x_t\}_{t=0}^\infty$ is almost surely a stationary point of $F$.
\end{itemize}
\end{theorem}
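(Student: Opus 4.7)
The plan is to adapt the Li-Orabona argument \cite{Li-Orabona:2019} to the manifold setting, working with the pulled-back random functions $f_{x_t,\omega_t} = f(R_{x_t}(\cdot),\omega_t)$ on the tangent spaces and using the retraction-specific local Lipschitz hypothesis as a substitute for the global Lipschitzness of loc.\ cit. I would first establish by induction the deterministic confinement $x_t \in K_1$: by \eqref{eq-bound-kappa} every realized step satisfies $\eta_t \leq \kappa$, so if $\rho(x_t)\leq \rho_0$ then \eqref{eq-def-confinement-1} applies directly with $s=\eta_t$, while if $\rho_0 \leq \rho(x_t) \leq \rho_1$ a second-order Taylor expansion of $\rho \circ R_{x_t}$ along $-\eta_t \nabla_M f(x_t,\omega_t)$, combined with \eqref{eq-def-confinement-2}, shows that the first-order drift of $\rho$ dominates the absolute second-order term, so $\rho(x_{t+1}) \leq \rho(x_t) \leq \rho_1$. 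Compactness of $K_1$ then delivers convergent subsequences and uniform constants $G,L,M$ such that $\|\nabla_M f(x_t,\omega_t)\|_{x_t} \leq G$, $|f|,|F| \leq M$ on $K_1$, and each $f_{x_t,\omega_t}$ has $L$-Lipschitz gradient on the tangent ball of radius $\kappa G$.

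The analytical core is a descent inequality. Using the retraction-specific Lipschitz bound together with Remark \ref{remark-gradient-coincide} to identify $\nabla f_{x_t,\omega_t}(\mathbf{0}_{x_t}) = \nabla_M f(x_t,\omega_t)$, the standard quadratic majorant argument applied to $f_{x_t,\omega_t}$ on the tangent ball of radius $\kappa G$ yields
\begin{equation*}
f(x_{t+1},\omega_t) \leq f(x_t,\omega_t) - \eta_t \|\nabla_M f(x_t,\omega_t)\|_{x_t}^2 + \tfrac{L}{2}\eta_t^2 \|\nabla_M f(x_t,\omega_t)\|_{x_t}^2.
\end{equation*}
Setting $\mathcal{F}_{t-1} = \sigma(\omega_0,\dots,\omega_{t-1})$, with respect to which $x_t$ and $\eta_t$ are measurable while $\omega_t$ is independent, and invoking Lemma \ref{lemma-gradient-difference} to bridge the gap between $F(x_{t+1})$ and the conditional expectation of $f(x_{t+1},\omega_t)$, I would obtain an inequality of the form
\begin{equation*}
E[F(x_{t+1})\,|\,\mathcal{F}_{t-1}] - F(x_t) \leq -\eta_t \|\nabla F(x_t)\|_{x_t}^2 + \tfrac{L\eta_t^2}{2} E\big[\|\nabla_M f(x_t,\omega_t)\|_{x_t}^2\,\big|\,\mathcal{F}_{t-1}\big] + \text{controlled error},
\end{equation*}
where the error term, vacuous in the Euclidean case of \cite{Li-Orabona:2019}, is bounded via Lemma \ref{lemma-gradient-difference} in terms of higher-order quantities that will be summable against $\eta_t^2$.

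Telescoping from $t=0$ to $T-1$, using $F \geq -M$ on $K_1$, and absorbing the martingale noise $N_T = \sum_{t=0}^{T-1}(F(x_{t+1}) - E[F(x_{t+1})\,|\,\mathcal{F}_{t-1}])$ produces
\begin{equation*}
\sum_{t=0}^{T-1} \eta_t \|\nabla F(x_t)\|_{x_t}^2 \leq 2M + \tfrac{L}{2}\sum_{t=0}^{T-1}\eta_t^2 \|\nabla_M f(x_t,\omega_t)\|_{x_t}^2 + N_T + \text{error}_T.
\end{equation*}
The adaptive form \eqref{eq-def-x-t} of $\eta_t$ makes the right-hand sum pathwise summable via the integral-test comparison $\sum_t \|g_t\|^2/(\beta+\sum_{i<t}\|g_i\|^2)^{1+2\ve} < \infty$, handled separately in the cases $\sum_t \|g_t\|^2 < \infty$ and $\sum_t \|g_t\|^2 = \infty$ as in \cite{Li-Orabona:2019}. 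The quadratic variation of $N_T$ is summable, so a Doob-type argument gives almost-sure convergence of $N_T$, and the error sum is controlled by Lemma \ref{lemma-gradient-difference}. Hence $\sum_t \eta_t \|\nabla F(x_t)\|_{x_t}^2 < \infty$ almost surely; the Li-Orabona argument then converts this summability to $\|\nabla F(x_t)\|_{x_t} \to 0$ almost surely (using that $\eta_t$ cannot decay too fast relative to $\|\nabla F(x_t)\|_{x_t}^2$), and continuity of $\nabla F$ on the compact $K_1$ promotes norm convergence along any convergent subsequence to stationarity of the limit.

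The principal obstacle will be the conditional-expectation step, where the curvature of the retraction obstructs the straightforward commutation between gradient, retraction, and expectation used in the Euclidean proof; this is precisely the discrepancy estimated in the forthcoming Lemma \ref{lemma-gradient-difference}. Once that lemma is available, everything else is a careful manifold-aware transcription of \cite{Li-Orabona:2019}, with the $\kappa$-confinement standing in for the global boundedness assumptions and the retraction-specific Lipschitz condition supplying the quadratic majorant that drives the descent.
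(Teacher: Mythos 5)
Your skeleton (confinement by induction $\Rightarrow$ descent inequality $\Rightarrow$ summability of $\sum_t \eta_t\|\nabla F(x_t)\|_{x_t}^2$ together with $\sum_t\eta_t=\infty$ $\Rightarrow$ conclusion via the Alber--Iusem--Solodov type lemma) matches the paper's proof, and your treatment of the confinement step is exactly right. But there is a genuine gap in your descent-inequality step. You apply the quadratic majorant to the \emph{random} pullback $f_{x_t,\omega_t}$, obtaining a bound on $f(x_{t+1},\omega_t)$, and then propose to pass to $F(x_{t+1})$ by ``invoking Lemma \ref{lemma-gradient-difference} to bridge the gap between $F(x_{t+1})$ and the conditional expectation of $f(x_{t+1},\omega_t)$.'' Lemma \ref{lemma-gradient-difference} does no such thing: it compares the two \emph{gradient norms} $\|\nabla F(R_x(\mathbf{v}))\|_{R_x(\mathbf{v})}$ and $\|\nabla(F\circ R_x)(\mathbf{v})\|_x$, not function values. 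Moreover, the discrepancy $E[f(x_{t+1},\omega_t)\mid\mathcal{F}_{t-1}]-F(x_{t+1})$ is of order $\eta_t$ (it contains the correlation $-\eta_t\bigl(E\|\nabla_M f(x_t,\omega_t)\|_{x_t}^2-\|\nabla F(x_t)\|_{x_t}^2\bigr)$), not an $O(\eta_t^2)$ error summable against $\eta_t^2$; so treating it as ``controlled error'' is not justified as stated. The clean fix, which is what the paper does in Lemmas \ref{lemma-F-locally-Lipschitz} and \ref{lemma-bound-F-gradient-sum}, is to first show that $F$ itself inherits the locally $R$-Lipschitz gradient from $f$ (via dominated convergence), apply the quadratic majorant to $F$ along the step $-\eta_t\nabla_M f(x_t,\omega_t)$, and then use that $\eta_t$ and $x_t$ are $\mathcal{F}_{t-1}$-measurable while $\omega_t$ is independent, so that $E[\eta_t\langle\nabla F(x_t),\nabla_M f(x_t,\omega_t)\rangle_{x_t}]=E[\eta_t\|\nabla F(x_t)\|_{x_t}^2]$ exactly, with no error term at all. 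Your step also omits verifying that $F$ has the required Lipschitz property in the first place.

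Relatedly, you have mislocated where Lemma \ref{lemma-gradient-difference} is actually indispensable. Its role is in the final step: to apply the Alber--Iusem--Solodov lemma (Lemma \ref{lemma-Li-Orabona-1}) with $b_t=\|\nabla F(x_t)\|_{x_t}^2$ and $a_t=\eta_t$, one must verify $|b_{t+1}-b_t|\leq L\eta_t$, and on a manifold this requires comparing $\|\nabla F(x_{t+1})\|_{x_{t+1}}$ (a vector in $T_{x_{t+1}}M$) with $\|\nabla F(x_t)\|_{x_t}$ (a vector in $T_{x_t}M$). The comparison is done in two stages: Lemma \ref{lemma-gradient-difference} controls $\bigl|\|\nabla F(x_{t+1})\|_{x_{t+1}}-\|\nabla(F\circ R_{x_t})(-\eta_t\nabla_M f(x_t,\omega_t))\|_{x_t}\bigr|$, and the local $R$-Lipschitzness of $\nabla F$ controls the remaining difference to $\|\nabla F(x_t)\|_{x_t}$. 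Your sketch glosses this as ``$\eta_t$ cannot decay too fast relative to $\|\nabla F(x_t)\|_{x_t}^2$,'' which does not identify the manifold-specific difficulty. A minor further remark: your martingale/Doob argument for the telescoped noise is unnecessary; since the summands $\eta_t\|\nabla F(x_t)\|_{x_t}^2$ are non-negative, finiteness of the expectation of the infinite sum already gives almost-sure convergence.
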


Some of our technical lemmas are fairly basic. Some of these have already appeared in \cite{Bertsekas-Tsitsiklis:2000,Li-Orabona:2019,Xu-Yang-Wu-CSGD} for example. For the convenience of the reader, we nevertheless include these lemmas and the proofs of most of them here. The main outlier is Lemma \ref{lemma-gradient-difference}. Over Euclidean spaces, this lemma is trivial. So it does not appear in \cite{Bertsekas-Tsitsiklis:2000,Li-Orabona:2019}. Even in \cite{Xu-Yang-Wu-CSGD}, we only need a weaker version of this lemma, which is easier to prove.  See \cite[Lemma A.7]{Xu-Yang-Wu-CSGD}.

\begin{lemma}\label{lemma-Taylor}
Let $h:M\rightarrow\R$ be any $C^2$ function. Then, for any $x\in M$ and $\mathbf{v}\in T_x M$, there exists an $s^\star \in [0,1]$ such that $h(R_x(\mathbf{v})) = h(x) + \left\langle  \nabla h(x), \mathbf{v}\right\rangle_x +\frac{1}{2}\Hess(h\circ R_x)|_{s^\star \mathbf{v}} (\mathbf{v},\mathbf{v})$, where $\Hess(h\circ R_x)$ is the Hessian of the function $h\circ R_x:T_x M \rightarrow \R$, which is defined on the inner product space $T_x M$. 
\end{lemma}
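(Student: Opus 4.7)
The plan is to reduce Lemma \ref{lemma-Taylor} to the classical one-variable Taylor theorem with Lagrange remainder by restricting $h\circ R_x$ to the line segment $\{s\mathbf{v}\mid s\in[0,1]\}$ in the inner product space $T_x M$.

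First, I would fix $x\in M$ and $\mathbf{v}\in T_x M$ and define the auxiliary function
\[
\phi:[0,1]\rightarrow\R,\qquad \phi(s)=h(R_x(s\mathbf{v})).
\]
Since $h$ is $C^2$ on $M$ and $R$ is $C^2$ (so $R_x:T_x M\to M$ is $C^2$), the composition $h\circ R_x$ is $C^2$ on $T_x M$, and hence $\phi$ is $C^2$ on $[0,1]$. The classical Taylor theorem with Lagrange remainder then yields some $s^\star\in[0,1]$ such that
\[
\phi(1)=\phi(0)+\phi'(0)+\tfrac{1}{2}\phi''(s^\star).
\]

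Next I would identify each term on the right-hand side. By the retraction axiom $R_x(\mathbf{0}_x)=x$, we have $\phi(0)=h(x)$. For the first derivative, the chain rule gives
\[
\phi'(s)=d(h\circ R_x)\big|_{s\mathbf{v}}(\mathbf{v})=\bigl\langle\nabla(h\circ R_x)(s\mathbf{v}),\mathbf{v}\bigr\rangle_x,
\]
so at $s=0$ the second retraction axiom $dR_x(\mathbf{0}_x)=\id_{T_x M}$ together with the chain rule gives $d(h\circ R_x)|_{\mathbf{0}_x}(\mathbf{v})=dh|_x(\mathbf{v})=\langle\nabla h(x),\mathbf{v}\rangle_x$ (which is exactly the deterministic form of \eqref{eq-gradient-coincide}). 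Thus $\phi'(0)=\langle\nabla h(x),\mathbf{v}\rangle_x$.

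Finally, for the second derivative, I would use the definition of the Hessian of the function $h\circ R_x$ on the inner product space $T_x M$: for any fixed direction $\mathbf{v}$, the second $s$-derivative of $s\mapsto(h\circ R_x)(s\mathbf{v}+\mathbf{w})$ evaluated at $s=0$ equals $\Hess(h\circ R_x)|_{\mathbf{w}}(\mathbf{v},\mathbf{v})$. Applying this with $\mathbf{w}=s^\star\mathbf{v}$ gives
\[
\phi''(s^\star)=\Hess(h\circ R_x)\big|_{s^\star\mathbf{v}}(\mathbf{v},\mathbf{v}).
\]
Substituting these three identifications into the Taylor expansion of $\phi$ yields the claimed formula. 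There is no real obstacle here; the only thing to be careful about is invoking the retraction properties at the correct place to convert $\nabla(h\circ R_x)(\mathbf{0}_x)$ into the intrinsic gradient $\nabla h(x)$, and stating the Hessian with respect to the fixed inner product on $T_x M$ so that the standard Euclidean Taylor expansion applies verbatim.
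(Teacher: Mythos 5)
Your proposal is correct and follows essentially the same route as the paper's proof: define $p(s)=h(R_x(s\mathbf{v}))$, apply the one-variable Taylor theorem with Lagrange remainder, and identify $p(0)$, $p'(0)$, and $p''(s^\star)$ using the retraction axioms (Equation \eqref{eq-gradient-coincide}) and the definition of the Hessian on the inner product space $T_x M$. No gaps.
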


\begin{proof}
Define $p(s)=h(R_x(s\mathbf{v}))$. Then, by Taylor's Theorem,  there exists an $s^\star \in [0,1]$ such that $p(1) = p(0) + p'(0) + \frac{1}{2}p''(s^\star)$. Note that:
\begin{itemize}
	\item $p(0) = h(R_x(\mathbf{0}_x)) =h(x)$ and $p(1) = h(R_x(\mathbf{v}))$;
	\item $p'(s) = \left\langle \nabla (h\circ R_x)(s\mathbf{v}) ,\mathbf{v}\right\rangle_x$ and therefore $p'(0) = \left\langle \nabla (h\circ R_x)(\mathbf{0}_x) ,\mathbf{v}\right\rangle_x=\left\langle \nabla h(x) ,\mathbf{v}\right\rangle_x$, where the last equation follows from Equation \eqref{eq-gradient-coincide};
	\item $p''(s) = \frac{d}{ds}\left\langle \nabla (h\circ R_x)(s\mathbf{v}) ,\mathbf{v}\right\rangle_x = \left\langle \frac{d}{ds}\nabla (h\circ R_x)(s\mathbf{v}) ,\mathbf{v}\right\rangle_x = \Hess(h\circ R_x)|_{s \mathbf{v}} (\mathbf{v},\mathbf{v})$.
\end{itemize}
Combining the above, we get that $h(R_x(\mathbf{v})) = h(x) + \left\langle  \nabla h(x), \mathbf{v}\right\rangle_x +\frac{1}{2}\Hess(h\circ R_x)|_{s^\star \mathbf{v}} (\mathbf{v},\mathbf{v})$.
\end{proof}

\begin{lemma}\label{lemma-x-t-confined}
$\rho(x_t) \leq \rho_1$ for all $t\geq 0$. That is, the sequence $\{x_t\}_{t=0}^\infty$ is contained in the compact subset $K_1$ of $M$ defined in Equation \eqref{eq-K-1-def}.
\end{lemma}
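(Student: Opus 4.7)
The plan is an induction on $t$, with the base case $\rho(x_0) \leq \rho_1$ being exactly assumption (\ref{assumption-initial value}). For the inductive step, I would first make the routine observation that $\eta_t \leq \kappa$ for every $t$: since the denominator of $\eta_t$ is at least $\beta^{1/2+\ve}$, inequality \eqref{eq-bound-kappa} gives
\[
\eta_t \leq \frac{\alpha}{\beta^{\frac{1}{2}+\ve}} \leq \kappa.
\]
Thus, writing $g_t := \nabla_M f(x_t,\omega_t)$, the update $x_{t+1} = R_{x_t}(-\eta_t g_t)$ is of the form $R_{x_t}(-s g_t)$ with $s = \eta_t \in [0,\kappa]$, which is exactly the form to which the $\kappa$-confinement conditions \eqref{eq-def-confinement-1} and \eqref{eq-def-confinement-2} apply.

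Then, assuming $\rho(x_t) \leq \rho_1$, I would split into two cases. If $\rho(x_t) \leq \rho_0$, then inequality \eqref{eq-def-confinement-1} applied with $s = \eta_t$ and $x = x_t$ gives $\rho(x_{t+1}) \leq \rho_1$ directly. If $\rho_0 \leq \rho(x_t) \leq \rho_1$, I would apply Lemma \ref{lemma-Taylor} to $h = \rho$ and $\mathbf{v} = -\eta_t g_t$ to obtain some $s^\star \in [0,1]$ with
\[
\rho(x_{t+1}) = \rho(x_t) - \eta_t \left\langle \nabla \rho(x_t), g_t\right\rangle_{x_t} + \tfrac{1}{2}\eta_t^2 \, \Hess(\rho \circ R_{x_t})|_{-s^\star \eta_t g_t}(g_t,g_t).
\]
Since $s^\star \eta_t \in [0,\eta_t] \subset [0,\kappa]$, inequality \eqref{eq-def-confinement-2} applies with $s = s^\star \eta_t$, giving
\[
\left\langle \nabla \rho(x_t), g_t\right\rangle_{x_t} \geq \tfrac{\kappa}{2}\bigl|\Hess(\rho \circ R_{x_t})|_{-s^\star \eta_t g_t}(g_t,g_t)\bigr|.
\]
Combining this with $\eta_t \leq \kappa$, the linear term dominates the quadratic term in absolute value, so $\rho(x_{t+1}) \leq \rho(x_t) \leq \rho_1$.

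The main bookkeeping point, and the only thing one must take care with, is to verify that the intermediate parameter $s^\star \eta_t$ produced by Taylor's theorem lies in $[0,\kappa]$ so that \eqref{eq-def-confinement-2} is legitimately applicable at that intermediate point; this is where the assumption $\frac{\alpha}{\beta^{1/2+\ve}} \leq \kappa$ is used. Once both cases are handled, the induction closes and $\{x_t\}_{t=0}^\infty \subset K_1$ as claimed.
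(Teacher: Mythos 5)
Your proposal is correct and follows essentially the same argument as the paper: induction on $t$, the bound $\eta_t \leq \alpha/\beta^{1/2+\ve} \leq \kappa$, the case split on whether $\rho(x_t) \leq \rho_0$, and the Taylor expansion from Lemma \ref{lemma-Taylor} combined with \eqref{eq-def-confinement-2} in the second case. Your explicit check that the intermediate parameter $s^\star\eta_t$ lies in $[0,\kappa]$ is a point the paper passes over more quickly, but the reasoning is identical.
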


\begin{proof}
We prove this lemma by induction. For $t=0$, $\rho(x_0) \leq \rho_1$ by our choice. Assume that $x_t \in K_1$. Let us consider $x_{t+1}$. Note that 
\begin{equation}\label{eq-eta-t-alpha-beta-bound}
0< \eta_t \leq  \frac{\alpha}{\beta^{\frac{1}{2}+\ve}} \leq \kappa \text{ for all } t \geq 0
\end{equation}
and let $K_0 := \left\{x\in M ~\big{|}~ \rho(x) \leq \rho_0 \right\}$.

If $x_t \in K_0\subset K_1$, then, by Inequality \eqref{eq-def-confinement-1}, we have that 
\[ 
\rho(x_{t+1})= R_{x_t}\left(-\eta_t \nabla_M f(x_t,\omega_t)\right) \leq \rho_1.
\]
So $x_{t+1} \in K_1$.

Now consider the case $x_t \in K_1\setminus K_0\subset K_1$. By Lemma \ref{lemma-Taylor}, we have that
\begin{eqnarray*}
	&& \rho(x_{t+1}) = \rho (R_{x_t}(-\eta_t\nabla_M f(x_t,\omega_t))) \\ 
	& = & \rho (x_t) - \eta_t\left\langle  \nabla \rho(x_t), \nabla_M f(x_t,\omega_t)\right\rangle_x  + \frac{1}{2}\eta_t^2\Hess(\rho\circ R_x)|_{-s^\star \eta_t\nabla_M f(x_t,\omega_t)} (\nabla_M f(x_t,\omega_t),\nabla_M f(x_t,\omega_t)) 
\end{eqnarray*}
for some $s^\star \in [0,1]$. By Inequalities \eqref{eq-def-confinement-2} and \eqref{eq-eta-t-alpha-beta-bound}, we have that
\begin{eqnarray*}
&& \rho(x_{t+1}) \\
& = & \rho (x_t) - \eta_t\left\langle  \nabla \rho(x_t), \nabla_M f(x_t,\omega_t)\right\rangle_x  + \frac{1}{2}\eta_t^2\Hess(\rho\circ R_x)|_{-s^\star \eta_t\nabla_M f(x_t,\omega_t)} (\nabla_M f(x_t,\omega_t),\nabla_M f(x_t,\omega_t)) \\
& \leq & \rho (x_t)  +\eta_t\left(-\left\langle  \nabla \rho(x_t), \nabla_M f(x_t,\omega_t)\right\rangle_x  + \frac{1}{2}\kappa\big{|}\Hess(\rho\circ R_x)|_{-s^\star \eta_t\nabla_M f(x_t,\omega_t)} (\nabla_M f(x_t,\omega_t),\nabla_M f(x_t,\omega_t))\big{|}\right) \\
& \leq & \rho (x_t) \leq \rho_1.
\end{eqnarray*}
Again, $x_{t+1} \in K_1$.

This shows that $\{x_t\}_{t=0}^\infty \subset K_1$.
\end{proof}

\begin{lemma}\label{lemma-F-locally-Lipschitz}
$F$ is first-order differentiable and has locally $R$-Lipschitz gradient. Moreover, there exists a $C_1>0$ such that $\|\nabla (F\circ R_x)(\mathbf{v}) - \nabla F(x)\|_x \leq C_1\|\mathbf{v}\|_x$ and $F(R_x(\mathbf{v})) \leq F(x) + \left\langle \nabla F(x), \mathbf{v} \right\rangle_x + \frac{C_1}{2}\|\mathbf{v}\|_x^2$ for every $x\in K_1$ and every $\mathbf{v}\in T_x M$ satisfying $\|\mathbf{v}\|_x \leq \kappa G_1$, where $G_1$ is given by
\begin{equation} \label{eq-G-1-def}
G_1  :=    \max \{\|\nabla_M f(x,\omega)\|_x ~\big{|}~ x \in K_1, ~\omega \in \Omega\}.
\end{equation}
\end{lemma}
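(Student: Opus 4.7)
The plan is to deduce all three claims from the corresponding properties of $f$ by differentiating under the integral sign. First, I would show that $F$ is first-order differentiable on $M$ with
\[
\nabla (F \circ R_x)(\mathbf{v}) = \int_\Omega \nabla f_{x,\omega}(\mathbf{v})\, d\mu(\omega)
\]
for every $x \in M$ and $\mathbf{v} \in T_x M$. Working in the retraction chart $R_x : T_x M \to M$, the local boundedness of $\|\nabla_M f\|$ on $M$ together with continuity of $dR$ (which holds since $R$ is $C^2$) produces a uniform-in-$\omega$ integrable dominant for the difference quotients of $F \circ R_x$ on a small tangent ball, so dominated convergence justifies the interchange. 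Setting $\mathbf{v}=\mathbf{0}_x$ and invoking Remark \ref{remark-gradient-coincide} identifies $\nabla F(x)$ with $\int_\Omega \nabla_M f(x,\omega)\, d\mu(\omega)$.

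Next I would set $C_1 := C_{K_1,\kappa G_1}$, the constant supplied by the locally $R$-Lipschitz hypothesis on $\nabla f$ applied to the compact sublevel set $K_1$ and radius $\kappa G_1$. Note that $G_1 < \infty$ follows from local boundedness of $\|\nabla_M f\|$ on the compact set $K_1$, so $\kappa G_1$ is a legitimate finite radius. For $x \in K_1$ and $\mathbf{v} \in T_x M$ with $\|\mathbf{v}\|_x \leq \kappa G_1$, I would subtract the integral formula at $\mathbf{v}$ from that at $\mathbf{0}_x$ and pull the norm inside the integral via the triangle inequality for Bochner integrals:
\[
\|\nabla(F \circ R_x)(\mathbf{v}) - \nabla F(x)\|_x \leq \int_\Omega \|\nabla f_{x,\omega}(\mathbf{v}) - \nabla f_{x,\omega}(\mathbf{0}_x)\|_x\, d\mu(\omega) \leq C_1 \|\mathbf{v}\|_x,
\]
which yields the first inequality and also shows that $F$ has locally $R$-Lipschitz gradient (any other compact $K$ and radius $r$ are handled in exactly the same way with $C_{K,r}$ replacing $C_{K_1,\kappa G_1}$).

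The quadratic descent bound then follows by the standard Lipschitz-gradient argument: define $p(s) = F(R_x(s\mathbf{v}))$ on $[0,1]$ and use the fundamental theorem of calculus to write
\[
F(R_x(\mathbf{v})) - F(x) - \langle \nabla F(x), \mathbf{v}\rangle_x = \int_0^1 \langle \nabla(F \circ R_x)(s\mathbf{v}) - \nabla F(x), \mathbf{v}\rangle_x\, ds;
\]
since $\|s\mathbf{v}\|_x \leq \kappa G_1$ for all $s \in [0,1]$, Cauchy--Schwarz and the first inequality (applied at $s\mathbf{v}$) bound the right-hand side by $\int_0^1 C_1 s\|\mathbf{v}\|_x^2\, ds = \tfrac{C_1}{2}\|\mathbf{v}\|_x^2$. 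The main obstacle I anticipate is the interchange of differentiation and integration in the first step, where one must produce a uniform-in-$\omega$ integrable dominant for the difference quotient; this is precisely where the hypothesis that $\|\nabla_M f\|$ be locally bounded (not merely locally integrable in $\omega$) is doing real work. Once that interchange is in place, transferring local Lipschitzness of the gradient and the descent inequality from $f$ to $F$ is routine.
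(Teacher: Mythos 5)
Your proposal is correct and follows essentially the same route as the paper: dominated convergence (justified by the local boundedness of $\|\nabla_M f\|$ and the $C^2$ retraction) to get $\nabla(F\circ R_x)(\mathbf{v})=\int_\Omega \nabla f_{x,\omega}(\mathbf{v})\,d\mu$, the triangle inequality under the integral with $C_1=C_{K_1,\kappa G_1}$ to transfer the local $R$-Lipschitz bound, and the fundamental theorem of calculus along $p(s)=F(R_x(s\mathbf{v}))$ plus Cauchy--Schwarz for the quadratic descent inequality. Your explicit remark that $G_1<\infty$ follows from local boundedness on the compact $K_1$ is a small point the paper leaves implicit, but otherwise the two arguments coincide.
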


\begin{proof}
Since $\|\nabla_M f(x,\omega)\|$ is locally bounded, it follows from the Donimated Convergence Theorem that $F$ is first-order differentiable and $\nabla F(x) = \int_\Omega \nabla_M f(x,\omega)d\mu$. Since $R$ is $C^2$, $\|\nabla_{T_x M} f(R_x(\mathbf{v}),\omega)\|$ is also locally bounded, So, again by the Donimated Convergence Theorem, we have $\nabla (F\circ R)(\mathbf{v}) = \int_\Omega \nabla_{T_x M} f(R_x(\mathbf{v}),\omega)d\mu = \int_\Omega \nabla f_{x,\omega}(\mathbf{v})d\mu$, where $f_{x,\omega}:=f(R_x(\ast),\omega):T_x M \rightarrow \R$ is defined in Definition \ref{def-R-lipschitz}. 

Since $f$ has locally $R$-Lipschitz gradient, we know that for every compact subset $K$ of $M$ and every $r>0$, there is a constant $C_{K,r}>0$ such that $\|\nabla f_{x,\omega}(\mathbf{v}) - \nabla f_{x,\omega}(\mathbf{0}_x)\|_x \leq C_{K,r}\|\mathbf{v}\|_x$ for every $x \in K$, every $\mathbf{v} \in T_x M$ satisfying $\|\mathbf{v}\|_x\leq r$ and every $\omega \in \Omega$. Thus,
\begin{eqnarray*}
&& \|\nabla (F\circ R_x)(\mathbf{v}) - \nabla (F\circ R_x)(\mathbf{0}_x)\|_x = \|\int_\Omega \nabla f_{x,\omega}(\mathbf{v})d\mu - \int_\Omega \nabla f_{x,\omega}((\mathbf{0}_x))d\mu\|_x \\
& \leq & \int_\Omega\| \nabla f_{x,\omega}(\mathbf{v})-\nabla f_{x,\omega}((\mathbf{0}_x))\|_xd\mu \leq \int_\Omega C_{K,r}\|\mathbf{v}\|_x d\mu = C_{K,r}\|\mathbf{v}\|_x.
\end{eqnarray*}
This shows that $F$ has locally $R$-Lipschitz gradient.

Now consider the special case $K=K_1=\{x\in M~\big{|}~ \rho(x)\leq \rho_1\}$ and $r=\kappa G_1$. Let $C_1= C_{K_1,\kappa G_1}$. Then, by Equation \eqref{eq-gradient-coincide} and the above inequality, we have that $\|\nabla (F\circ R_x)(\mathbf{v}) - \nabla F(x)\|_x \leq C_1\|\mathbf{v}\|_x$ for every $x\in M$ satisfying $\rho(x)\leq \rho_1$ and every $\mathbf{v}\in T_x M$ satisfying $\|\mathbf{v}\|\leq \kappa G_1$. Let $p(s)=F(R_x(s\mathbf{v}))$ for $s\in [0,1]$. Then $p'(s)=\left\langle  \nabla (F\circ R_x)(s\mathbf{v}), \mathbf{v}\right\rangle_x$. And
\begin{eqnarray*}
F(R_x(\mathbf{v})) - F(x) & = & p(1) -p(0) = \int_0^1 p'(s) ds = \int_0^1 \left\langle  \nabla (F\circ R_x)(s\mathbf{v}), \mathbf{v}\right\rangle_x ds \\
& = & \int_0^1 \left\langle  \nabla F(x)+\nabla (F\circ R_x)(s\mathbf{v})-\nabla F(x), \mathbf{v}\right\rangle_x ds \\
& = &  \left\langle  \nabla F(x), \mathbf{v}\right\rangle_x + \int_0^1 \left\langle  \nabla (F\circ R_x)(s\mathbf{v})-\nabla F(x), \mathbf{v}\right\rangle_x ds \\
& \leq & \left\langle  \nabla F(x), \mathbf{v}\right\rangle_x + \int_0^1 \|\nabla (F\circ R_x)(s\mathbf{v})-\nabla F(x)\|_x \|\mathbf{v}\|_x ds \\
& \leq & \left\langle  \nabla F(x), \mathbf{v}\right\rangle_x + \int_0^1 C_1s \|\mathbf{v}\|_x^2 ds  = \left\langle  \nabla F(x), \mathbf{v}\right\rangle_x + \frac{C_1}{2}\|\mathbf{v}\|_x^2
\end{eqnarray*}
for every $x\in K_1$ and every $\mathbf{v}\in T_x M$ satisfying $\|\mathbf{v}\|\leq \kappa G_1$. This completes the proof.
\end{proof}

To proceed further, we need to understand the dependence of the solution of an ODE initial value problem on the initial value and other parameters.

\begin{lemma}\label{lemma-ode-C-1-dependence}\cite[Chapter 1, Proposition 6.1]{Taylor}
Let $h:\R \times \R^n \times \R^l \rightarrow \R^n$ be a $C^1$ function. Assume that there is an open set $U\subset \R^n \times \R^l$  and an interval $ (t_{-1},t_1)$ such that  $t_{-1}<t_0<t_1$ and the initial value problem 
\begin{equation}\label{eq-ode-ivp-parameters}
\frac{d\vec{y}}{dt} = h(t, \vec{y}, \vec{z}) \text{ and } \vec{y}(t_0) = \vec{x}
\end{equation}
has a solution $\vec{y}(t,\vec{x},\vec{z})$ for $t \in (t_{-1},t_1)$ and $\left[
\begin{array}{c}
\vec{x} \\
\vec{z}
\end{array}
\right] \in U$. Then, for each $t \in (t_{-1},t_1)$, the function $\vec{y}(t,\vec{x},\vec{z})$ is $C^1$ jointly in the initial value $\vec{x}$ and the parameter vector $\vec{z}$ for $\left[
\begin{array}{c}
\vec{x} \\
\vec{z}
\end{array}
\right] \in U$.
\end{lemma}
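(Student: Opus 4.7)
The plan is to absorb the parameters into the state variable so that the claim reduces to the classical theorem on smooth dependence of an ODE solution on initial data alone. Set $\vec{w} = \left[\begin{array}{c}\vec{y} \\ \vec{z}\end{array}\right] \in \R^{n+l}$ and $H(t,\vec{w}) = \left[\begin{array}{c} h(t,\vec{y},\vec{z}) \\ \vec{0}\end{array}\right]$. Since $h$ is $C^1$, so is $H$. The augmented initial value problem $d\vec{w}/dt = H(t,\vec{w})$, $\vec{w}(t_0)=\left[\begin{array}{c}\vec{x} \\ \vec{z}\end{array}\right]$, has the unique solution $\vec{w}(t,\vec{x},\vec{z})=(\vec{y}(t,\vec{x},\vec{z}),\vec{z})$ on $(t_{-1},t_1)$, so it suffices to prove $C^1$ dependence on the initial datum $\vec{w}_0$ for an unparametrized $C^1$ ODE.

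First, I would fix a compact subinterval $[a,b]\subset(t_{-1},t_1)$ containing $t_0$ and a compact neighborhood of the reference initial value inside $U$. By compactness and the $C^1$ assumption, $D_{\vec{w}}H$ is bounded, say by $L>0$, on a tube around the reference solution. Applying Gronwall's inequality to $\vec{w}(t,\vec{w}_0^{(1)})-\vec{w}(t,\vec{w}_0^{(2)})$ gives
\[
|\vec{w}(t,\vec{w}_0^{(1)})-\vec{w}(t,\vec{w}_0^{(2)})|\leq|\vec{w}_0^{(1)}-\vec{w}_0^{(2)}|\,e^{L(b-a)},
\]
which yields continuous dependence and confirms that the solution exists on $[a,b]$ for every $\vec{w}_0$ in a sufficiently small neighborhood.

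Next, I would introduce the variational equation: for each such $\vec{w}_0$, let $A(t;\vec{w}_0)$ be the unique solution of the linear matrix ODE $dA/dt=D_{\vec{w}}H(t,\vec{w}(t,\vec{w}_0))\,A$ with $A(t_0)=I_{n+l}$, which exists on all of $[a,b]$ because the coefficient is continuous. The candidate derivative is $\partial\vec{w}/\partial\vec{w}_0=A$. To verify this, I would analyze the remainder $Q_\epsilon(t):=\epsilon^{-1}(\vec{w}(t,\vec{w}_0+\epsilon\vec{e})-\vec{w}(t,\vec{w}_0))-A(t;\vec{w}_0)\vec{e}$, differentiate in $t$, and apply the mean value theorem to $H$ to obtain a linear ODE $dQ_\epsilon/dt=D_{\vec{w}}H(t,\vec{w}(t,\vec{w}_0))\,Q_\epsilon+r_\epsilon(t)$ with $Q_\epsilon(t_0)=0$, where the inhomogeneity $r_\epsilon$ collects the linearization error. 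A final Gronwall estimate gives $\|Q_\epsilon\|_\infty\lesssim\|r_\epsilon\|_\infty\,e^{L(b-a)}\to 0$, establishing differentiability.

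The main obstacle, and the step that uses the $C^1$ hypothesis on $h$ essentially, is showing $\|r_\epsilon\|_\infty\to 0$. This requires the uniform continuity of $D_{\vec{w}}H$ on a compact tube, combined with the already-established continuous dependence to guarantee that $\vec{w}(t,\vec{w}_0+\epsilon\vec{e})$ stays inside that tube uniformly in $t\in[a,b]$ for $\epsilon$ small. Once this is in hand, continuity of $A(t;\vec{w}_0)$ in $\vec{w}_0$ follows by a further Gronwall argument applied to the variational equation itself, so $(t,\vec{w}_0)\mapsto\vec{w}(t,\vec{w}_0)$ has continuous partial derivatives and is therefore $C^1$ jointly in $\vec{w}_0$. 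Unpacking the augmented system recovers the claimed $C^1$ dependence of $\vec{y}(t,\vec{x},\vec{z})$ jointly in $(\vec{x},\vec{z})$.
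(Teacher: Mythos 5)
Your reduction---absorbing the parameter $\vec{z}$ into the state vector so that parameter dependence becomes dependence on the initial value---is exactly the device the paper uses (the paper additionally augments the time variable $\tau$ with $\tau'=1$ so as to invoke the cited proposition verbatim). The only difference is that the paper then stops and cites \cite[Chapter 1, Proposition 6.1]{Taylor} for $C^1$ dependence on initial data, whereas you go on to give a correct self-contained proof of that classical result via Gronwall estimates and the variational equation; both routes are sound.
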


\begin{proof}
Define a $C^1$ function $H:\R \times \R^n \times \R^l \rightarrow \R \times \R^n \times \R^l$ by 
\[
H \left( \left[
\begin{array}{c}
\tau \\
\vec{y} \\
\vec{z}
\end{array}
\right] \right) = 
\left[
\begin{array}{c}
1 \\
h(\tau, \vec{y}, \vec{z}) \\
\vec{0}
\end{array}
\right].
\]
Then $\left[
\begin{array}{c}
t \\
\vec{y}(t,\vec{x},\vec{z}) \\
\vec{z}
\end{array}
\right]$ is the solution of the initial value problem
\[
\frac{d}{dt}\left[
\begin{array}{c}
\tau(t) \\
\vec{y}(t) \\
\vec{z}(t)
\end{array}
\right] =
H \left( \left[
\begin{array}{c}
\tau(t) \\
\vec{y}(t) \\
\vec{z}(t)
\end{array}
\right] \right) 
\text{ and } 
\left[
\begin{array}{c}
\tau(t_0) \\
\vec{y}(t_0) \\
\vec{z}(t_0)
\end{array}
\right] =
\left[
\begin{array}{c}
t_0 \\
\vec{x} \\
\vec{z}
\end{array}
\right]
\]
on the interval $(t_{-1},t_1)$. By \cite[Chapter 1, Proposition 6.1]{Taylor}, for each $t \in (t_{-1},t_1)$, the function $\left[
\begin{array}{c}
t \\
\vec{y}(t,\vec{x},\vec{z}) \\
\vec{z}
\end{array}
\right]$ is $C^1$ in the initial value $\left[
\begin{array}{c}
t_0 \\
\vec{x} \\
\vec{z}
\end{array}
\right]$. This proves the lemma.
\end{proof}

\begin{definition}\label{def-PT-R}
Recall that $TM \oplus TM$ is the vector bundle over $M$ obtained by direct summing $TM$ to itself. Its fiber over $x \in M$ is $T_x M \oplus T_x M$. It comes with a standard smooth manifold structure. For any $(\mathbf{u}_x, \mathbf{v}_x) \in T_x M \oplus T_x M$, define $PT_R(\mathbf{u}_x, \mathbf{v}_x) \in T_{R_x(\mathbf{u}_x)} M$ to be vector resulted from the parallel transportation of $\mathbf{v}_x$ along the curve $R_x(t\mathbf{u}_x)$ from $t=0$ to $t=1$. Then $PT_R: TM \oplus TM \rightarrow TM$ is a function.
\end{definition}

\begin{lemma}\label{lemma-PT-R-C-1}
The function $PT_R: TM \oplus TM \rightarrow TM$ defined in Definition \ref{def-PT-R} is $C^1$. Moreover, for any $x \in M$ and $\mathbf{u}_x, \mathbf{v}_x^1, \mathbf{v}_x^2 \in T_x M$, we have that
\begin{equation}\label{eq-PT-R-parallel}
\left\langle PT_R(\mathbf{u}_x, \mathbf{v}_x^1), PT_R(\mathbf{u}_x, \mathbf{v}_x^2)\right\rangle_{R_x(\mathbf{u}_x)} = \left\langle \mathbf{v}_x^1, \mathbf{v}_x^2\right\rangle_{x}.
\end{equation}
\end{lemma}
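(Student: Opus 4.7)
The plan is to split the statement into its two halves and handle them independently. The inner product identity is essentially the metric-compatibility of Levi-Civita parallel transport, so it follows from a standard one-line argument. The real work is the $C^1$ regularity, which I would reduce to Lemma \ref{lemma-ode-C-1-dependence} by writing the parallel transport equation in local coordinates and checking that all the coefficients have the needed regularity.

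For the regularity, I would fix an arbitrary point $x_0 \in M$, choose a coordinate chart $(U,\phi)$ around $x_0$ together with the induced frame $\{\partial_i\}$ on $TU$, and work in the associated trivialization of $TM \oplus TM$ over $U$. For $(x,\mathbf{u}_x,\mathbf{v}_x)$ near $(x_0,\mathbf{0},\mathbf{0})$, set $\gamma_{x,\mathbf{u}}(t):=R_x(t\mathbf{u}_x)$. Since $R$ is $C^2$, the coordinate representation of $\gamma_{x,\mathbf{u}}(t)$ is $C^2$ jointly in $(t,x,\mathbf{u})$, so in particular its coordinate velocity $\dot\gamma_{x,\mathbf{u}}^j(t)$ is $C^1$ in $(t,x,\mathbf{u})$. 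The parallel transport $V(t) \in T_{\gamma_{x,\mathbf{u}}(t)} M$ of $\mathbf{v}_x$ satisfies the linear initial value problem
\[
\dot V^i(t) = -\Gamma^i_{jk}\bigl(\gamma_{x,\mathbf{u}}(t)\bigr)\,\dot\gamma^j_{x,\mathbf{u}}(t)\, V^k(t),\qquad V(0)=\mathbf{v}_x,
\]
where $\Gamma^i_{jk}$ are the smooth Christoffel symbols of the Levi-Civita connection. The right-hand side is thus $C^1$ in $(t,V,x,\mathbf{u})$ and linear in $V$, so it fits the framework of Lemma \ref{lemma-ode-C-1-dependence} with $(x,\mathbf{u})$ playing the role of the parameter vector and $\mathbf{v}_x$ the initial value. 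Linearity guarantees global existence of the solution on $[0,1]$, and the lemma then gives that $V(1;x,\mathbf{u},\mathbf{v}_x)$ is jointly $C^1$ in $(x,\mathbf{u},\mathbf{v}_x)$. Since $V(1)$ is the coordinate representation of $PT_R(\mathbf{u}_x,\mathbf{v}_x)$ in the frame at $R_x(\mathbf{u}_x)$, and the chart around $x_0$ was arbitrary, this exhibits $PT_R$ as a $C^1$ map between smooth manifolds.

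For the inner product identity, fix $x$ and $\mathbf{u}_x$, and let $V^1(t)$ and $V^2(t)$ be the parallel transports along the common curve $\gamma(t)=R_x(t\mathbf{u}_x)$ with $V^i(0)=\mathbf{v}_x^i$. Metric compatibility of the Levi-Civita connection gives
\[
\tfrac{d}{dt}\left\langle V^1(t),V^2(t)\right\rangle_{\gamma(t)} = \left\langle \nabla_{\dot\gamma}V^1,V^2\right\rangle_{\gamma(t)} + \left\langle V^1,\nabla_{\dot\gamma}V^2\right\rangle_{\gamma(t)} = 0,
\]
so the pairing is constant on $[0,1]$. Equating its values at $t=0$ and $t=1$ yields the claimed equality.

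The main obstacle I anticipate is bookkeeping rather than depth: one must be careful that the local trivialization of $TM\oplus TM$ is used consistently so that Lemma \ref{lemma-ode-C-1-dependence} applies to an ODE of the form it requires, and that the regularity of the coefficients (which hinges on $R$ being $C^2$, not merely $C^1$) is traced correctly. Everything else is standard Riemannian geometry.
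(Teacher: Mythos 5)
Your treatment of the inner-product identity is fine (the paper simply cites it as a standard property of parallel transport), and your core analytic step --- writing the parallel transport equation in a chart and invoking Lemma \ref{lemma-ode-C-1-dependence}, with $(x,\mathbf{u})$ as parameters and $\mathbf{v}_x$ as initial value --- is exactly the engine of the paper's proof. But there is a genuine gap in how you localize. You fix a chart around a point $x_0\in M$ and argue only for $(x,\mathbf{u}_x,\mathbf{v}_x)$ near $(x_0,\mathbf{0},\mathbf{0})$; this establishes $C^1$ regularity of $PT_R$ only on a neighborhood of the zero section of $TM\oplus TM$. A general point of $TM\oplus TM$ is a pair $(\mathbf{u}_x,\mathbf{v}_x)$ with $\mathbf{u}_x$ arbitrary, and for large $\mathbf{u}_x$ the curve $t\mapsto R_x(t\mathbf{u}_x)$, $t\in[0,1]$, need not stay in any single coordinate chart, so your single-chart ODE cannot even be written down along the whole curve. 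The restriction matters for the application: Lemma \ref{lemma-gradient-difference-local} uses $PT_R(\mathbf{u},\cdot)$ for all $\|\mathbf{u}\|_y\le r$ with $r$ arbitrary (ultimately $r=\kappa G_1$), not just for $\mathbf{u}$ small.

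The paper closes this gap by covering the compact curve $R_x(t\mathbf{u}_x)$, $t\in[0,1]$, with finitely many charts via the Lebesgue Number Lemma, shrinking to a neighborhood $\mathcal{W}$ of $\mathbf{u}_x$ in $TM$ so that for every nearby $\mathbf{w}_y$ the subsegment $t\in[\frac{l-1}{p},\frac{l}{p}]$ of $R_y(t\mathbf{w}_y)$ stays in the $l$-th chart, and then inducting on $l$: the transport over each subsegment is $C^1$ in the data by precisely your ODE argument, and the composition of these finitely many $C^1$ maps is $C^1$. This chaining is not mere bookkeeping --- without it the proof covers only a neighborhood of the zero section --- but once it is added, your argument coincides with the paper's.
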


\begin{proof}
Equation \eqref{eq-PT-R-parallel} is a standard property of parallel transportation. We only need to prove that $PT_R: TM \oplus TM \rightarrow TM$ is $C^1$. Fix an $x \in M$ and $\mathbf{u}_x \in T_x M$. Consider the curve $\mathscr{C}$ given by $R_x(t\mathbf{u}_x)$ for $0\leq t \leq 1$. By the Lebesgue Number Lemma, there is a positive integer $p$ such that, for $l=1,2,\dots,p$, there is a coordinate chart $U_l$ such that the segment of $\mathscr{C}$ with $\frac{l-1}{p} \leq t \leq \frac{l}{p}$ is contained in $U_l$. By the continuity of $R$ and the compactness of closed intervals, we know that there is a neighborhood $\mathcal{W}$ of $\mathbf{u}_x$ in $T M$ such that, for any $\mathbf{w}_y\in \mathcal{W}$, the curve $R_y(t\mathbf{w}_y)$ with $\frac{l-1}{p} \leq t \leq \frac{l}{p}$ is contained in $U_l$.

Next, we prove that $PT_R$ is $C^1$ on the open subset $\widetilde{\mathcal{W}}:= \{(\mathbf{w}_y, \mathbf{v}_y) \in TM \oplus TM ~\big{|}~  \mathbf{w}_y\in \mathcal{W}\}$ of $TM \oplus TM$. We do so by inductively prove that $PT_R(\frac{l}{p}\mathbf{w}_y, \mathbf{v}_y)$ is $C^1$ in $(\mathbf{w}_y, \mathbf{v}_y)$ for $(\mathbf{w}_y, \mathbf{v}_y) \in \widetilde{\mathcal{W}}$ for $l=0,1,2, \dots,p$.

For $l=0$, $PT_R(\mathbf{0}_y, \mathbf{v}_y)=\mathbf{v}_y$ and is clearly $C^\infty$. Now assume that $PT_R(\frac{l-1}{p}\mathbf{w}_y, \mathbf{v}_y)$ is $C^1$ in $(\mathbf{w}_y, \mathbf{v}_y)$ for $(\mathbf{w}_y, \mathbf{v}_y) \in \widetilde{\mathcal{W}}$. We prove that $PT_R(\frac{l}{p}\mathbf{w}_y, \mathbf{v}_y)$ is $C^1$ in $(\mathbf{w}_y, \mathbf{v}_y)$ for $(\mathbf{w}_y, \mathbf{v}_y) \in \widetilde{\mathcal{W}}$. Denote by $(z_1,\dots,z_m)$ the coordinate functions for the chart $U_l$. These equip $TU_l$ with a frame $[\frac{\partial}{\partial z_1},\dots,\frac{\partial}{\partial z_m}]$. Note that $PT_R(t\mathbf{w}_y, \mathbf{v}_y)$ for $\frac{l-1}{p} \leq t \leq \frac{l}{p}$ is a parallel vector field along the curve $R_y(t\mathbf{w}_y)$ with $\frac{l-1}{p} \leq t \leq \frac{l}{p}$. So it satisfies the first order differential equation $\frac{D}{\partial t}\left(PT_R(t\mathbf{w}_y, \mathbf{v}_y)\right) = 0$ for $\frac{l-1}{p} \leq t \leq \frac{l}{p}$. Under the frame $[\frac{\partial}{\partial z_1},\dots,\frac{\partial}{\partial z_m}]$, this is 
\begin{equation}\label{eq-parallel-transport}
\frac{\partial v_k}{\partial t}  + \sum_{i=1}^m \sum_{j=1}^m \Gamma_{i,j}^k|_{R_y(t\mathbf{w}_y)}  \left(\frac{\partial}{\partial t} \left(z_i|_{R_y(t\mathbf{w}_y)}\right) \right)  v_j = 0 \text{ for } k =1,2,\dots,m,
\end{equation}
where $PT_R(t\mathbf{w}_y, \mathbf{v}_y) = \sum_{j=1}^m v_j \frac{\partial}{\partial z_j}$ and $\{\Gamma_{i,j}^k\}$ are the Christoffel symbols of the Levi-Civita connection with respected to the coordinates $(z_1,\dots,z_m)$, which are $C^{\infty}$ in $(z_1,\dots,z_m)$. Since $R$ is $C^2$, we know that:
\begin{itemize}
	\item $\Gamma_{i,j}^k|_{R_y(t\mathbf{w}_y)}$ is $C^2$ in $\mathbf{w}_y$ for $\mathbf{w}_y \in \mathcal{W}$,
	\item $\frac{\partial}{\partial t} \left(z_i|_{R_y(t\mathbf{w}_y)}\right)$ is $C^1$ in $\mathbf{w}_y$ for $\mathbf{w}_y \in \mathcal{W}$.
\end{itemize}

Write $\vec{v} = [v_1,\dots,v_m]^T \in \R^m$ and $\Lambda = [\sum_{i=1}^m \Gamma_{i,j}^k|_{R_y(t\mathbf{w}_y)}  \left(\frac{\partial}{\partial t} \left(z_i|_{R_y(t\mathbf{w}_y)}\right)\right)]_{m \times m}$. Then $\Lambda$ is a matrix of functions that are $C^1$ in $\mathbf{w}_y$ for $\mathbf{w}_y \in \mathcal{W}$. And Equation \eqref{eq-parallel-transport} becomes
\begin{equation}\label{eq-parallel-transport-1}
\frac{\partial \vec{v}}{\partial t}  + \Lambda  \vec{v} = 0.
\end{equation}
By Lemma \ref{lemma-ode-C-1-dependence}, the solution $\vec{v}$ of Equation \eqref{eq-parallel-transport-1} is $C^1$ jointly in $\mathbf{w}_y$ for $\mathbf{w}_y \in \mathcal{W}$ and in the initial value $\vec{v}|_{t=\frac{l-1}{p}}$. But $PT_R(\frac{l-1}{p}\mathbf{w}_y, \mathbf{v}_y)$ is $C^1$ in $(\mathbf{w}_y, \mathbf{v}_y)$ for $(\mathbf{w}_y, \mathbf{v}_y) \in \widetilde{\mathcal{W}}$ and $PT_R(\frac{l-1}{p}\mathbf{w}_y, \mathbf{v}_y) = [\frac{\partial}{\partial z_1},\dots,\frac{\partial}{\partial z_m}]\vec{v}|_{t=\frac{l-1}{p}}$. So $\vec{v}|_{t=\frac{l-1}{p}}$ is a column of functions that are $C^1$ in $(\mathbf{w}_y, \mathbf{v}_y)$ for $(\mathbf{w}_y, \mathbf{v}_y) \in \widetilde{\mathcal{W}}$ . Putting these together, we have that $PT_R(\frac{l}{p}\mathbf{w}_y, \mathbf{v}_y) = [\frac{\partial}{\partial z_1},\dots,\frac{\partial}{\partial z_m}]\vec{v}|_{t=\frac{l}{p}}$ is also $C^1$ in $(\mathbf{w}_y, \mathbf{v}_y)$ for $(\mathbf{w}_y, \mathbf{v}_y) \in \widetilde{\mathcal{W}}$.

This completes the induction and proves that $PT_R(\mathbf{w}_y, \mathbf{v}_y)$ is $C^1$ in $(\mathbf{w}_y, \mathbf{v}_y)$ for $(\mathbf{w}_y, \mathbf{v}_y) \in \widetilde{\mathcal{W}}$. In summary, we have established that, for every $(\mathbf{u}_x, \mathbf{v}_x) \in TM \oplus TM$, there is an open neighborhood $\widetilde{\mathcal{W}}$ of $(\mathbf{u}_x, \mathbf{v}_x)$ in $TM \oplus TM$ on which $PT_R$ is $C^1$. This proves that $PT_R: TM \oplus TM \rightarrow TM$ is $C^1$.
\end{proof}

\begin{lemma}\label{lemma-gradient-difference-local}
Fix any $x \in M$ and any $r>0$. Then there are an open neighborhood $U_x$ of $x$ in $M$ and a positive number $C_{x,r}$ such that 
\[
\left|\|\nabla F (R_y(\mathbf{u}))\|_{R_y(\mathbf{u})} - \|\nabla(F\circ R_y)(\mathbf{u})\|_y\right| \leq C_{x,r} \|\mathbf{u}\|_y
\] 
for every $y \in U_x$ and every $\mathbf{u} \in T_y M$ satisfying $\|\mathbf{u}\|_y \leq r$.
\end{lemma}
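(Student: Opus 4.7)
The plan is to first use the isometry property of parallel transport (Equation~\eqref{eq-PT-R-parallel}) to write $\|\nabla(F\circ R_y)(\mathbf{u})\|_y = \|PT_R(\mathbf{u}, \nabla(F\circ R_y)(\mathbf{u}))\|_{R_y(\mathbf{u})}$. The reverse triangle inequality in the inner product space $T_{R_y(\mathbf{u})}M$ then reduces the claim to establishing
\[
\|\nabla F(R_y(\mathbf{u})) - PT_R(\mathbf{u}, \nabla(F\circ R_y)(\mathbf{u}))\|_{R_y(\mathbf{u})} \leq C_{x,r}\|\mathbf{u}\|_y
\]
for $y$ in a small neighborhood of $x$ and $\|\mathbf{u}\|_y \leq r$.

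Next I would insert $\pm PT_R(\mathbf{u}, \nabla F(y))$ and exploit the linearity and isometry of $PT_R(\mathbf{u},\cdot)$ to split the left-hand side into $A + B$ with
\[
A := \|\nabla F(y) - \nabla(F\circ R_y)(\mathbf{u})\|_y, \qquad B := \|\nabla F(R_y(\mathbf{u})) - PT_R(\mathbf{u}, \nabla F(y))\|_{R_y(\mathbf{u})}.
\]
The term $A$ is immediate: $\nabla F(y) = \nabla(F\circ R_y)(\mathbf{0}_y)$ by Equation~\eqref{eq-gradient-coincide}, and $F$ has locally $R$-Lipschitz gradient by Lemma~\ref{lemma-F-locally-Lipschitz}, so choosing a compact neighborhood $K$ of $x$ and applying the Lipschitz estimate gives $A \leq C_1\|\mathbf{u}\|_y$ for $y\in K$ and $\|\mathbf{u}\|_y \leq r$.

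The delicate part is $B$, since the hypotheses only give retraction-specific Lipschitz control of $\nabla F$ rather than pointwise Lipschitzness as a vector field on $M$. I would work in a coordinate chart around $x$ and use the chain rule $\nabla(F\circ R_y)(\mathbf{u}) = (dR_y|_{\mathbf{u}})^* \nabla F(R_y(\mathbf{u}))$. Since $R$ is $C^2$ and $dR_y|_{\mathbf{0}_y} = \id$, on a small neighborhood of $(x, \mathbf{0}_x)$ the linear operator $\Phi_y(\mathbf{u}) := ((dR_y|_{\mathbf{u}})^*)^{-1}: T_y M \to T_{R_y(\mathbf{u})}M$ is well-defined, is $C^1$ in $(y,\mathbf{u})$, satisfies $\Phi_y(\mathbf{0}_y) = \id$, and has locally bounded operator norm. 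This yields the identity
\begin{align*}
\nabla F(R_y(\mathbf{u})) - PT_R(\mathbf{u}, \nabla F(y)) = &\,\Phi_y(\mathbf{u})\bigl[\nabla(F\circ R_y)(\mathbf{u}) - \nabla F(y)\bigr] \\
&+ \bigl[\Phi_y(\mathbf{u}) - PT_R(\mathbf{u}, \cdot)\bigr]\nabla F(y).
\end{align*}
The first summand is $O(\|\mathbf{u}\|_y)$ by Lemma~\ref{lemma-F-locally-Lipschitz} together with the local boundedness of the operator norm of $\Phi_y(\mathbf{u})$. The second summand is a $C^1$ family of linear maps (using Lemma~\ref{lemma-PT-R-C-1} for the $C^1$ regularity of $PT_R$) that vanishes identically on the zero section $\mathbf{u}=\mathbf{0}_y$, so by Taylor's theorem its operator norm is $O(\|\mathbf{u}\|_y)$ locally; applied to the locally bounded vector $\nabla F(y)$, it also contributes $O(\|\mathbf{u}\|_y)$. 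Combining both summands gives $B \leq C_2\|\mathbf{u}\|_y$, and one concludes with $C_{x,r} = C_1+C_2$.

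The main obstacle is precisely the treatment of $B$: the chain-rule correction $\Phi_y(\mathbf{u})$ encodes the non-isometric nature of $dR_y$ away from the zero section, and must be quantitatively compared with the honest parallel transport $PT_R(\mathbf{u},\cdot)$. This is where the $C^1$ regularity of parallel transport established in Lemma~\ref{lemma-PT-R-C-1} is essential, and explains why such an estimate is nontrivial over a curved manifold even though the analogous statement is automatic in the Euclidean setting of \cite{Li-Orabona:2019}.
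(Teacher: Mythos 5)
Your reduction to bounding $A+B$ is algebraically sound: the isometry and linearity of $PT_R(\mathbf{u},\cdot)$ together with the reverse triangle inequality justify the first step, the identity you use for the chain rule, $\nabla(F\circ R_y)(\mathbf{u}) = \mathrm{adj}(dR_y|_{\mathbf{u}})(\nabla F(R_y(\mathbf{u})))$, is exactly the one the paper derives, and your bound on $A$ via Lemma \ref{lemma-F-locally-Lipschitz} and Equation \eqref{eq-gradient-coincide} is correct. The gap is in the treatment of $B$. You define $\Phi_y(\mathbf{u}) := ((dR_y|_{\mathbf{u}})^{*})^{-1}$ and assert it is well-defined with locally bounded operator norm ``on a small neighborhood of $(x,\mathbf{0}_x)$.'' But the lemma must hold for \emph{every} $\mathbf{u}\in T_yM$ with $\|\mathbf{u}\|_y\leq r$, where $r$ is a fixed, possibly large, number chosen in advance (downstream it is taken to be $\alpha G_1/\beta^{\frac12+\ve}$); only the base-point neighborhood $U_x$ may be shrunk, not the fiber radius. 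A retraction is only required to satisfy $dR_y|_{\mathbf{0}_y}=\id$, so $dR_y|_{\mathbf{u}}$ can be singular for $\mathbf{u}$ away from the zero section, and then $\Phi_y(\mathbf{u})$ does not exist; even where it exists its operator norm need not be controlled. Since both summands in your decomposition of $B$ pass through $\Phi_y(\mathbf{u})$, the argument breaks down on exactly the region the lemma is required to cover.

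The fix — and the paper's route — is to never invert $dR_y|_{\mathbf{u}}$. Working in a parallel-transported orthonormal frame (whose joint $C^1$ dependence is Lemma \ref{lemma-PT-R-C-1}), the paper writes $\nabla F(R_y(\mathbf{u}))$ with coordinate vector $\vec{w}$ and observes that $\|\nabla(F\circ R_y)(\mathbf{u})\|_y = \|[dR_{i,j}(y,\mathbf{u})]^{T}\vec{w}\|$, so the difference of norms is at most $\|([dR_{i,j}(y,\mathbf{u})]-I)^{T}\vec{w}\|$. The matrix entries are $C^1$ and equal $\delta_{i,j}$ at $\mathbf{u}=\mathbf{0}_y$, so the mean value theorem gives $|dR_{i,j}(y,\mathbf{u})-\delta_{i,j}|\leq D_{x,r}\|\mathbf{u}\|_y$ on the compact set $\overline{U_x}\times\{\|\mathbf{u}\|_y\leq r\}$, and $\|\vec{w}\|=\|\nabla F(R_y(\mathbf{u}))\|$ is bounded there. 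This uses only the local boundedness of $\|\nabla F\|$ — not the $R$-Lipschitzness of $\nabla F$, which your term $A$ invokes — and requires no invertibility anywhere. Your decomposition could be repaired in the same spirit by bounding the operator norm of the $C^1$ family $\mathbf{w}\mapsto PT_R(\mathbf{u},\mathrm{adj}(dR_y|_{\mathbf{u}})\mathbf{w})-\mathbf{w}$, which vanishes on the zero section, rather than introducing $\Phi_y(\mathbf{u})$; as written, however, the proof of the $B$ estimate does not go through.
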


\begin{proof}
For any $y \in M$ and $\mathbf{u},\mathbf{v}\in T_y M$, we have that 
\begin{eqnarray*}
&&\left\langle \nabla (F\circ R_y)(\mathbf{u}), \mathbf{v}\right\rangle_y = d(F\circ R_y)|_{\mathbf{u}}(\mathbf{v}) = ((dF|_{R_y(\mathbf{u})})\circ (dR_y|_{\mathbf{u}}))(\mathbf{v})  \\
& = & \left\langle (\nabla F)(R_y(\mathbf{u})), (dR_y|_{\mathbf{u}})(\mathbf{v})\right\rangle_{R_y(\mathbf{u})} = \left\langle \mathrm{adj}(dR_y|_{\mathbf{u}})((\nabla F)(R_y(\mathbf{u}))), \mathbf{v}\right\rangle_{y},
\end{eqnarray*}
where $\mathrm{adj}(dR_y|_{\mathbf{u}})$ is the adjoint of $dR_y|_{\mathbf{u}}$ with respect to the inner products $\left\langle \ast,\ast\right\rangle_y$ and $\left\langle \ast,\ast\right\rangle_{R_y(\mathbf{u})}$. This shows that 
\begin{equation}\label{eq-lemma-gradient-difference-1}
\nabla (F\circ R_y)(\mathbf{u})=\mathrm{adj}(dR_y|_{\mathbf{u}})((\nabla F)(R_y(\mathbf{u}))).
\end{equation} 

Let $(W_x,(y_1,\dots,y_m))$ be a $C^\infty$ coordinate chart containing $x$. Fix an open neighborhood $U_x$ of $x$ such that its closure $\overline{U_x}$ is compact and contained in $W_x$. For $y\in W_x$, apply the Gram-Schmidt Process to the basis $[\frac{\partial }{\partial y_1},\dots,\frac{\partial }{\partial y_m}]$ of $T_x M$ and then normalize the lengths. This gives an orthonormal basis $[\mathbf{v}_1(y),\dots,\mathbf{v}_m(y)]$ for $T_y M$ and a $C^\infty$ $m\times m$ invertible upper-triangular matrix $P(y)$ such that $[\frac{\partial }{\partial y_1},\dots,\frac{\partial }{\partial y_m}]=[\mathbf{v}_1(y),\dots,\mathbf{v}_m(y)]P(y)$. For $y \in U_x$ and $\mathbf{u}\in T_y M$, we use the orthonormal basis $[\mathbf{v}_1(y),\dots,\mathbf{v}_m(y)]$ for $T_y M$ and the basis \linebreak $[PT_R(\mathbf{u},\mathbf{v}_1(y)),\dots,PT_R(\mathbf{u},\mathbf{v}_m(y))]$ for $T_{R_y(\mathbf{u})} M$. By Lemma \ref{lemma-PT-R-C-1}, $[PT_R(\mathbf{u},\mathbf{v}_1(y)),\dots,PT_R(\mathbf{u},\mathbf{v}_m(y))]$ is orthonormal and $C^1$ jointly in $y$ and $\mathbf{u}$. 

Under the standard identification $T_{\mathbf{u}}(T_y M) = T_y M$, denote by $[dR_{i,j}]$ the $m \times m$ matrix of the differential map $dR_y|_{\mathbf{u}}: T_y M \rightarrow T_{R_y(\mathbf{u})} M$ under these bases for the two tangent spaces. Then each $dR_{i,j}$ is $C^1$ jointly in $y$ and $\mathbf{u}$. Therefore, the gradient $\nabla_{\mathbf{u}} dR_{i,j}(y,\mathbf{u})$ is continuous jointly in $y$ and $\mathbf{u}$ for $1 \leq i,j \leq m$. So $D_{x,r} := \max\{\|\nabla_\mathbf{u} dR_{i,j}(y,\mathbf{u})\|_y ~\big{|}~1\leq i,j\leq m,~y\in \overline{U_x},~\mathbf{u}\in T_y M,~\|\mathbf{u}\|_y \leq r\}< \infty$. By the Mean Value Theorem, there is an $s^\star \in [0,1]$ such that 
\[
| dR_{i,j}(y,\mathbf{u}) - dR_{i,j}(y,\mathbf{0}_y)| = |\left\langle \nabla_{\mathbf{u}} dR_{i,j}(y,s^\star\mathbf{u}),\mathbf{u}\right\rangle_y| 
 \leq  \|\nabla_{\mathbf{u}} dR_{i,j}(y,s^\star\mathbf{u})\|_y  \|\mathbf{u}\|_y\leq D_{x,r} \|\mathbf{u}\|_y.
\]
Recall that $dR_y|_{\mathbf{0}_y} =\id_{T_y M}$ and $[PT_R(\mathbf{0},\mathbf{v}_1(y)),\dots,PT_R(\mathbf{0},\mathbf{v}_m(y))] = [\mathbf{v}_1(y),\dots,\mathbf{v}_m(y)]$. So $[dR_{i,j}(y,\mathbf{0}_y)]=I_m$, that is,
\[
dR_{i,j}(y,\mathbf{0}_y) =\delta_{i,j}=
\begin{cases}
1 &\text{if }i=j, \\ 
0 &\text{if }i\neq j.
\end{cases}
\]
Combining the above, we get that, for $y\in \overline{U_x}$ and $\mathbf{u}\in T_y M$ satisfying $\|\mathbf{u}\|_y \leq r$,
\begin{equation}\label{eq-lemma-gradient-difference-2}
|dR_{i,j}(y,\mathbf{u}) - \delta_{i,j}| \leq D_{x,r} \|\mathbf{u}\|_y.
\end{equation}
Furthermore, note that the matrix representing $\mathrm{adj} (dR_y|_{\mathbf{u}}): T_{R_y(\mathbf{u})} M \rightarrow T_y M$ is $[dR_{i,j}]^T$ under the bases $[PT_R(\mathbf{u},\mathbf{v}_1(y)),\dots,PT_R(\mathbf{u},\mathbf{v}_m(y))]$ for $T_{R_y(\mathbf{u})} M$ and $[\mathbf{v}_1(y),\dots,\mathbf{v}_m(y)]$ for $T_y M$ since both bases are orthonormal.

Denote by $\|\ast\|$ the standard norm on $R^m$ and by $\|\ast\|_F$ the Frobenius norm on $\R^{m \times m}$. Also, for $y\in \overline{U_x}$ and $\mathbf{u}\in T_y M$ satisfying $\|\mathbf{u}\|_y \leq r$, let $\vec{w} = \vec{w}(y,\mathbf{u})=[w_1(y,\mathbf{u}),\dots,w_m(y,\mathbf{u})]^T \in \R^m$ be such that
\[
\nabla F(R_y(\mathbf{u})) = [PT_R(\mathbf{u},\mathbf{v}_1(y)),\dots,PT_R(\mathbf{u},\mathbf{v}_m(y))] \vec{w}.
\]
Then, for $y\in \overline{U_x}$ and $\mathbf{u}\in T_y M$ satisfying $\|\mathbf{u}\|_y \leq r$, we have
\begin{eqnarray*}
&& \left|\|\nabla F (R_y(\mathbf{u}))\|_{R_y(\mathbf{u})} - \|\nabla(F\circ R_y)(\mathbf{u})\|_y\right| = \left|\|\nabla F (R_y(\mathbf{u}))\|_{R_y(\mathbf{u})} - \|\mathrm{adj}(R_y|_\mathbf{u})(\nabla F (R_y(\mathbf{u})))\|_y\right| \\
& = & \left| \|[PT_R(\mathbf{u},\mathbf{v}_1(y)),\dots,PT_R(\mathbf{u},\mathbf{v}_m(y))]\vec{w}\|_{R_y(\mathbf{u})} - \|[\mathbf{v}_1(y),\dots, \mathbf{v}_m(y)][dR_{i,j}(y,\mathbf{u})]^T\vec{w}\|_{y} \right| \\
& = & \left| \|\vec{w}\| - \|[dR_{i,j}(y,\mathbf{v})]^T\vec{w}\| \right| \leq \|[dR_{i,j}(y,\mathbf{u})]^T\vec{w} - \vec{w}\|= \|([dR_{i,j}(y,\mathbf{u})-\delta_{i,j}]^T)\vec{w} \| \\
& = & \sqrt{\sum_{j=1}^m (\sum_{i=1}^m  ( dR_{i,j}(y,\mathbf{u})-\delta_{i,j}) w_i(y,\mathbf{u}))^2} \leq \sqrt{m\sum_{j=1}^m \sum_{i=1}^m  ( dR_{i,j}(y,\mathbf{u})-\delta_{i,j})^2 (w_i(y,\mathbf{u}))^2} \\
& \leq & \sqrt{m\sum_{j=1}^m \sum_{i=1}^m  D_{x,r}^2\|\mathbf{u}\|_y^2 (w_i(y,\mathbf{u})))^2} = \sqrt{m^2 D_{x,r}^2\|\mathbf{u}\|_y^2 \sum_{i=1}^m (w_i(y,\mathbf{u}))^2} \\
& = & m D_{x,r}\|\mathbf{u}\|_y\|\vec{w}\| = m D_{x,r} \|\mathbf{u}\|_y \|\nabla F (R_y(\mathbf{u}))\|_{R_y(\mathbf{u})} \leq m D_{x,r} \|\mathbf{u}\|_y G_{x,r},
\end{eqnarray*}
where $G_{x,r} := \max\{ \|\nabla F(R_y(\mathbf{u}))\|_{R_y(\mathbf{u})}~\big{|}~ y \in \overline{U_x}, ~\mathbf{u}\in T_y M, ~ \|\mathbf{u}\|\leq  r \}< \infty$.
Now define $C_{x,r} := m D_{x,r} G_{x,r}>0$. The above computations show that Lemma \ref{lemma-gradient-difference-local} is true for this choice of $C_{x,r}$.
\end{proof}

The following lemma is an improved version of \cite[Lemma A.7]{Xu-Yang-Wu-CSGD} and follows easily from Lemma \ref{lemma-gradient-difference-local}. 

\begin{lemma} \label{lemma-gradient-difference}
For any fixed positive number $r$, there is a constant $C_2>0$ such that 
\[
\left|\|\nabla F (R_x(\mathbf{v}))\|_{R_x(\mathbf{v})} - \|\nabla(F\circ R_x)(\mathbf{v})\|_x\right| \leq C_2 \|\mathbf{v}\|_x
\] 
for every $x \in K_1$ and every $\mathbf{v} \in T_x M$ satisfying $\|\mathbf{v}\|_x \leq r$.
\end{lemma}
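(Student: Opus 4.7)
The plan is to deduce this global (but restricted to $K_1$) statement from Lemma \ref{lemma-gradient-difference-local} by a straightforward compactness argument, since $K_1$ is compact by Assumption (\ref{assumption-confinement-function}) of Theorem \ref{thm-Ada-SGD-mfd}.

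First, I would fix $r>0$ and apply Lemma \ref{lemma-gradient-difference-local} at every point $x \in K_1$, obtaining for each such $x$ an open neighborhood $U_x \subset M$ and a constant $C_{x,r}>0$ such that
\[
\left|\|\nabla F (R_y(\mathbf{u}))\|_{R_y(\mathbf{u})} - \|\nabla(F\circ R_y)(\mathbf{u})\|_y\right| \leq C_{x,r} \|\mathbf{u}\|_y
\]
whenever $y \in U_x$ and $\mathbf{u} \in T_y M$ with $\|\mathbf{u}\|_y \leq r$. The collection $\{U_x\}_{x\in K_1}$ is then an open cover of the compact set $K_1$.

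Next, I would extract a finite subcover $U_{x_1}, U_{x_2}, \dots, U_{x_n}$ of $K_1$ and define
\[
C_2 := \max\{C_{x_1,r}, C_{x_2,r}, \dots, C_{x_n,r}\} > 0.
\]
Given any $x \in K_1$, there is some $i \in \{1,\dots,n\}$ with $x \in U_{x_i}$, and then for every $\mathbf{v} \in T_x M$ with $\|\mathbf{v}\|_x \leq r$ the local estimate from Lemma \ref{lemma-gradient-difference-local} yields
\[
\left|\|\nabla F (R_x(\mathbf{v}))\|_{R_x(\mathbf{v})} - \|\nabla(F\circ R_x)(\mathbf{v})\|_x\right| \leq C_{x_i,r} \|\mathbf{v}\|_x \leq C_2 \|\mathbf{v}\|_x,
\]
proving the lemma.

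There is no substantive obstacle here: all of the geometric work (handling the difference between $\nabla F$ transported back along $R$ and $\nabla(F\circ R_x)$, controlling $dR_y|_\mathbf{u}$ near $\mathbf{u}=\mathbf{0}_y$, etc.) has already been absorbed into the proof of Lemma \ref{lemma-gradient-difference-local}. The only mild point to verify is that the local constants $C_{x,r}$ produced there do not depend on the specific $y$ chosen inside $U_x$, which is precisely the content of that lemma's conclusion. Thus the argument reduces to the standard ``compactness plus finite maximum'' pattern.
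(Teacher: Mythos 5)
Your argument is correct and is exactly the paper's own proof: cover the compact set $K_1$ by the neighborhoods from Lemma \ref{lemma-gradient-difference-local}, pass to a finite subcover, and take $C_2$ to be the maximum of the finitely many local constants. No gaps.
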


\begin{proof}
By Lemma \ref{lemma-gradient-difference-local}, for every $x \in K_1$, there are an open neighborhood $U_x$ of $x$ in $M$ and a positive number $C_{x,r}$ such that 
\[
\left|\|\nabla F (R_y(\mathbf{u}))\|_{R_y(\mathbf{u})} - \|\nabla(F\circ R_y)(\mathbf{u})\|_y\right| \leq C_{x,r} \|\mathbf{u}\|_y
\] 
for every $y \in U_x$ and every $\mathbf{u} \in T_y M$ satisfying $\|\mathbf{u}\|_y \leq r$. But $K_1$ is compact. So there are $x_1,\cdots x_n \in K_1$ such that $K_1 \subset \bigcup_{i=1}^n U_{x_i}$. Set $C_2:= \max\{C_{x_1,r}.\dots.C_{x_n,r}\}$. Then Lemma \ref{lemma-gradient-difference} is true for this choice of $C_2$.
\end{proof}

With the above lemmas in hand, the rest of the proof of Theorem \ref{thm-Ada-SGD-mfd} proceeds similarly to that of \cite[Theorem 1]{Li-Orabona:2019}. The next lemma is a manifold version of \cite[Lemma 3]{Li-Orabona:2019}. Similar lemmas have been used in the previous literature (for example \cite{Bertsekas-Tsitsiklis:2000}).

\begin{lemma}\label{lemma-bound-F-gradient-sum}
Set $F^{\ast}= \min\{F(x)~\big{|}~x\in K_1\}$. Then, for any $T\geq 1$, 
\begin{equation}\label{eq-bound-F-gradient-sum}
E(\sum_{t=0}^T \eta_t \|\nabla F(x_t)\|_{x_t}^2) \leq F(x_0) - F^{\ast} + \frac{C_1}{2}E(\sum_{t=0}^T \eta_t^2 \|\nabla_M f(x_t,\omega_t)\|_{x_t}^2),
\end{equation}
where $C_1$ is the positive constant from Lemma \ref{lemma-F-locally-Lipschitz}.
\end{lemma}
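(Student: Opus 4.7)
The plan is to carry out a standard telescoping descent argument, with one small technical check needed to ensure that the descent inequality from Lemma \ref{lemma-F-locally-Lipschitz} is actually applicable along the trajectory. First I would note that, by Lemma \ref{lemma-x-t-confined}, every $x_t$ lies in $K_1$, and by \eqref{eq-eta-t-alpha-beta-bound} we have $\eta_t \leq \kappa$, so the step $\mathbf{v}_t := -\eta_t \nabla_M f(x_t,\omega_t) \in T_{x_t} M$ satisfies $\|\mathbf{v}_t\|_{x_t} \leq \kappa \|\nabla_M f(x_t,\omega_t)\|_{x_t} \leq \kappa G_1$, where $G_1$ is the bound from Lemma \ref{lemma-F-locally-Lipschitz}. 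Applying that lemma to $\mathbf{v}_t$ therefore yields the pointwise descent inequality
\[
F(x_{t+1}) \leq F(x_t) - \eta_t \left\langle \nabla F(x_t), \nabla_M f(x_t,\omega_t) \right\rangle_{x_t} + \frac{C_1}{2}\eta_t^2 \|\nabla_M f(x_t,\omega_t)\|_{x_t}^2.
\]

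Next I would introduce the natural filtration $\mathcal{F}_t := \sigma(\omega_0,\dots,\omega_{t-1})$. By induction from the recursion \eqref{eq-def-x-t}, both $x_t$ and $\eta_t$ are $\mathcal{F}_t$-measurable, while $\omega_t$ is independent of $\mathcal{F}_t$ with distribution $\mu$. Since $F(x) = E(f(x,\omega))$ and $\nabla F(x) = \int_\Omega \nabla_M f(x,\omega)\,d\mu$ (as verified inside the proof of Lemma \ref{lemma-F-locally-Lipschitz}), I obtain
\[
E\!\left(\eta_t \left\langle \nabla F(x_t), \nabla_M f(x_t,\omega_t) \right\rangle_{x_t} \,\Big{|}\, \mathcal{F}_t\right) = \eta_t \|\nabla F(x_t)\|_{x_t}^2.
\]
Taking conditional expectation of the descent inequality given $\mathcal{F}_t$ and then taking total expectation, I arrive at
\[
E(\eta_t \|\nabla F(x_t)\|_{x_t}^2) \leq E(F(x_t)) - E(F(x_{t+1})) + \frac{C_1}{2} E(\eta_t^2 \|\nabla_M f(x_t,\omega_t)\|_{x_t}^2).
\]

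Finally, I would sum this inequality from $t=0$ to $t=T$. The right-hand side telescopes to $F(x_0) - E(F(x_{T+1})) + \frac{C_1}{2}E\bigl(\sum_{t=0}^T \eta_t^2 \|\nabla_M f(x_t,\omega_t)\|_{x_t}^2\bigr)$. Since $x_{T+1} \in K_1$ by Lemma \ref{lemma-x-t-confined}, and $F^\ast = \min_{K_1} F$ is well defined because $F$ is continuous on the compact set $K_1$, we have $E(F(x_{T+1})) \geq F^\ast$, and \eqref{eq-bound-F-gradient-sum} follows at once.

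The only genuinely delicate point, and the one I would guard most carefully, is the measurability step ensuring the cross term collapses to $\eta_t\|\nabla F(x_t)\|_{x_t}^2$; in particular, it is essential that $\eta_t$ depends only on $\omega_0,\dots,\omega_{t-1}$ (not on $\omega_t$), which is visible from the definition in \eqref{eq-def-x-t}. Everything else is bookkeeping, and the argument is a verbatim manifold analogue of the Euclidean telescoping used in \cite{Bertsekas-Tsitsiklis:2000,Li-Orabona:2019}, made legitimate here by the confinement Lemma \ref{lemma-x-t-confined} together with the retraction-adapted descent estimate in Lemma \ref{lemma-F-locally-Lipschitz}.
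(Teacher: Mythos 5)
Your proposal is correct and follows essentially the same route as the paper's proof: the retraction-adapted descent inequality from Lemma \ref{lemma-F-locally-Lipschitz}, collapse of the cross term via conditioning on the past (the paper conditions on $(x_t,\eta_t)$, which is equivalent to your filtration $\mathcal{F}_t$), and telescoping with $E(F(x_{T+1}))\geq F^{\ast}$. Your explicit check that $\|\eta_t\nabla_M f(x_t,\omega_t)\|_{x_t}\leq\kappa G_1$, which licenses the application of Lemma \ref{lemma-F-locally-Lipschitz}, is a welcome bit of care that the paper leaves implicit.
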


\begin{proof}
By Lemma \ref{lemma-F-locally-Lipschitz}, we have that
\[
F(x_{t+1}) = F(R_{x_t}(-\eta_t \nabla_M f(x_t,\omega_t)) \leq F(x_{t}) - \eta_t\left\langle \nabla F(x_t),  \nabla_M f(x_t,\omega_t)\right\rangle_{x_t} + \frac{C_1}{2} \eta_t^2\|\nabla_M f(x_t,\omega_t) \|_{x_t}^2.
\]
Taking expectations on both sides, we get that
\[
E(F(x_{t+1})) \leq E(F(x_{t})) - E(\eta_t\left\langle \nabla F(x_t),  \nabla_M f(x_t,\omega_t)\right\rangle_{x_t}) + \frac{C_1}{2} E(\eta_t^2\|\nabla_M f(x_t,\omega_t) \|_{x_t}^2)).
\]
Note that
\[
E(\eta_t\left\langle \nabla F(x_t),  \nabla_M f(x_t,\omega_t)\right\rangle_{x_t}) = E(E(\eta_t\left\langle \nabla F(x_t),  \nabla_M f(x_t,\omega_t)\right\rangle_{x_t}~\big{|}~ x_t,\eta_t)) = E(\eta_t\left\langle \nabla F(x_t),  \nabla F(x_t)\right\rangle_{x_t}).
\]
So 
\begin{equation}\label{eq-bound-F-gradient-sum-1}
E(\eta_t \|\nabla F(x_t)\|_{x_t}^2) \leq E(F(x_t)) - E(F(x_{t+1})) + \frac{C_1}{2}E( \|\nabla_M f(x_t,\omega_t)\|_{x_t}^2).
\end{equation}
Summing Inequality \eqref{eq-bound-F-gradient-sum-1} from $0$ to $T$, we get that
\begin{equation}\label{eq-bound-F-gradient-sum-2}
E(\sum_{t=0}^T \eta_t \|\nabla F(x_t)\|_{x_t}^2) \leq F(x_0) - E(F(x_{T+1})) + \frac{C_1}{2}E(\sum_{t=0}^T \eta_t^2 \|\nabla_M f(x_t,\omega_t)\|_{x_t}^2),
\end{equation}
where $E(F(x_0)) = F(x_0)$ since $x_0$ is fixed. But $E(F(x_{T+1}))\geq F^{\ast}$ since $\{x_{t}\}_{t=0}^\infty \subset K_1$. This prove Inequality \eqref{eq-bound-F-gradient-sum}.
\end{proof}

The next two lemmas are \cite[Lemma 1 and 2]{Li-Orabona:2019}. Please see \cite{Li-Orabona:2019} and the references therein for their proofs.

\begin{lemma}[] \label{lemma-Li-Orabona-1} 
{
\makeatletter
\newif\ifnobrackets
\renewcommand\@cite[2]{\ifnobrackets\else[\fi{#1\if@tempswa , #2\fi}\ifnobrackets\else]\fi\nobracketsfalse}
\newcommand\nbcite{\nobracketstrue\cite}
\makeatother

\textup{[\nbcite[Proposition 2]{Alber-Iusem-Solodov:1998}; \nbcite[Lemma A.5]{Mairal:2013}]} 
}
Let $\{a_t\}_{t=0}^\infty$ and $\{b_t\}_{t=0}^\infty$be two sequence of non-negative numbers. Assume that 
\begin{itemize}
	\item $\sum_{t=0}^\infty a_t b_t$ converges,
	\item $\sum_{t=0}^\infty a_t$ diverges,
	\item there exists an $L\geq 0$ such that $|b_{t+1}-b_t|\leq L a_t$ for $t\geq 0$. 
\end{itemize}
Then $\lim_{t\rightarrow \infty} b_t =0$.
\end{lemma}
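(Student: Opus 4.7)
The plan is to argue by contradiction: suppose $b_t \not\to 0$, so $\limsup_{t\to\infty} b_t \geq 2\ve$ for some $\ve > 0$, and infinitely many indices $t$ satisfy $b_t \geq 2\ve$. The Lipschitz-type hypothesis telescopes to
\[
|b_n - b_m| \leq L \sum_{t=m}^{n-1} a_t \quad \text{for all } m \leq n,
\]
so $b$ cannot move between two levels without accumulating proportional $a$-mass. This single observation drives everything.

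I then split into two cases. If there exists $N$ such that $b_t \geq \ve$ for every $t \geq N$, then $\sum_{t \geq N} a_t b_t \geq \ve \sum_{t \geq N} a_t = \infty$, because $\sum_t a_t$ diverges; this contradicts the convergence of $\sum_t a_t b_t$. Otherwise, $b_t < \ve$ holds for infinitely many $t$. Combined with $b_t \geq 2\ve$ infinitely often, this allows me to inductively extract a disjoint sequence of intervals $[T_k, T_k')$ where $b_{T_k} \geq 2\ve$ and $T_k'$ is the first index after $T_k$ for which $b_{T_k'} < \ve$ (pick $T_{k+1} > T_k'$ with $b_{T_{k+1}} \geq 2\ve$ and iterate).

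On each such interval, minimality of $T_k'$ forces $b_t \geq \ve$ for $T_k \leq t \leq T_k' - 1$. Moreover, $b_{T_k} - b_{T_k'} > \ve$ combined with the telescoped Lipschitz bound yields $\sum_{t=T_k}^{T_k'-1} a_t \geq \ve/L$, and hence
\[
\sum_{t=T_k}^{T_k'-1} a_t b_t \geq \ve \cdot \frac{\ve}{L} = \frac{\ve^2}{L}.
\]
Summing this uniform lower bound over the infinitely many disjoint intervals gives $\sum_{t=0}^{\infty} a_t b_t = \infty$, again a contradiction. Hence $\lim_{t\to\infty} b_t = 0$.

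The main obstacle is a bookkeeping one: arranging the excursions to be genuinely disjoint while exploiting the ``infinitely often'' hypotheses on both sides, and ensuring the per-excursion contribution $\ve^2/L$ is uniform across $k$. Once the extraction of $\{(T_k, T_k')\}$ is set up carefully, the rest is a direct chain of inequalities and requires no further input beyond the three stated hypotheses.
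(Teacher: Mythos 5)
Your argument is correct, and it is essentially the standard proof of this classical lemma. The paper itself does not prove this statement---it cites [Alber--Iusem--Solodov, Proposition 2] and [Mairal, Lemma A.5] and refers the reader to Li--Orabona---and the excursion argument you give (telescoping the Lipschitz bound over maximal intervals on which $b_t$ stays above $\ve$, extracting a uniform lower bound $\ve^2/L$ per disjoint excursion, and contradicting the convergence of $\sum a_t b_t$) is precisely the argument in those references. The only cosmetic point worth tightening is the degenerate case $L=0$: there the telescoped bound forces $b$ to be constant, so your Case 2 never arises and the division by $L$ is never performed; a one-line remark suffices.
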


\begin{lemma}\cite[Lemma 2]{Li-Orabona:2019}  \label{lemma-Li-Orabona-2}
Let $a_0>1$, $a_t\geq 0$ for $t=1,\dots, T$ and $b>1$. Then 
\[
\sum_{t=1}^T \frac{a_t}{(a_0+\sum_{i=1}^ta_i)^b} \leq \frac{1}{(b-1)a_0^{b-1}}.
\]
\end{lemma}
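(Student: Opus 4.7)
The plan is an integral-comparison argument that telescopes. I would introduce the partial sums $S_0 := a_0$ and $S_t := a_0 + \sum_{i=1}^{t} a_i$ for $t \geq 1$, so that $a_t = S_t - S_{t-1}$ and $\{S_t\}_{t=0}^T$ is a non-decreasing sequence of positive numbers bounded below by $a_0 > 1 > 0$.

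First I would observe that on the interval $[S_{t-1}, S_t]$ one has $x \leq S_t$, and since $b > 0$ this gives $x^{-b} \geq S_t^{-b}$ pointwise. Integrating this inequality yields the per-term bound
\[
\frac{a_t}{S_t^b} \;=\; \int_{S_{t-1}}^{S_t} \frac{1}{S_t^b}\,dx \;\leq\; \int_{S_{t-1}}^{S_t} \frac{1}{x^b}\,dx,
\]
which remains correct (both sides equal to $0$) in the degenerate case $a_t = 0$, where $S_{t-1} = S_t$.

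Next I would sum over $t = 1, \dots, T$; the right-hand side telescopes into a single integral, and the assumption $b > 1$ lets me evaluate it via the antiderivative $-\frac{1}{(b-1)x^{b-1}}$:
\[
\sum_{t=1}^T \frac{a_t}{S_t^b} \;\leq\; \int_{S_0}^{S_T} \frac{dx}{x^b} \;=\; \frac{1}{b-1}\left(\frac{1}{a_0^{b-1}} - \frac{1}{S_T^{b-1}}\right) \;\leq\; \frac{1}{(b-1)\,a_0^{b-1}},
\]
where the final inequality simply discards the non-negative term $\frac{1}{(b-1)S_T^{b-1}}$.

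There is no real obstacle here; the only subtlety is making sure vanishing increments $a_t = 0$ do not break the per-term estimate, which is why I phrase that step as an integral comparison rather than a pointwise bound. I also note that the hypothesis $a_0 > 1$ is stronger than what this proof requires (any $a_0 > 0$ would suffice), so the bound stated in the lemma follows as written.
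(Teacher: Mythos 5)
Your proof is correct. Note that the paper itself does not prove this lemma---it is quoted verbatim from Li and Orabona and the reader is referred to that reference for the proof---so there is no in-paper argument to compare against; your integral-comparison telescoping is the standard way to establish this bound and is exactly the kind of argument the cited source uses. Each step checks out: the per-term bound $\frac{a_t}{S_t^b} \leq \int_{S_{t-1}}^{S_t} x^{-b}\,dx$ follows from monotonicity of $x^{-b}$ on $[S_{t-1},S_t]$, the sum telescopes into $\int_{a_0}^{S_T} x^{-b}\,dx$, and the hypothesis $b>1$ gives the finite antiderivative. Your observation that $a_0>0$ suffices (rather than $a_0>1$) is also accurate; the stronger hypothesis is simply inherited from how the lemma is applied with $a_0=\beta$ elsewhere in the literature.
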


Now we are ready to adapt the proof of \cite[Theorem 1]{Li-Orabona:2019} to manifolds and prove Theorem \ref{thm-Ada-SGD-mfd}.

\begin{lemma}\label{lemma-gradient-square-sum-converge}
We have
\begin{itemize}
	\item $\sum_{t=0}^\infty \eta_t^2\| \nabla_M f(x_t, \omega_t)\|_{x_t}^2$ converges,
	\item $\sum_{t=0}^\infty \eta_t\| \nabla F(x_t)\|_{x_t}^2$ converges almost surely,
	\item $\sum_{t=0}^\infty \eta_t = \infty$.
\end{itemize}
\end{lemma}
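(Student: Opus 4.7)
The plan is to leverage the confinement from Lemma~\ref{lemma-x-t-confined}: since $\{x_t\}_{t=0}^\infty\subset K_1$ and $\|\nabla_M f\|$ is locally bounded, the quantity $G_1$ from~\eqref{eq-G-1-def} is finite and gives the deterministic uniform bound $\|\nabla_M f(x_t,\omega_t)\|_{x_t}\le G_1$ for all $t$ and all $\omega$. Setting $S_t:=\beta+\sum_{s=0}^{t-1}\|\nabla_M f(x_s,\omega_s)\|_{x_s}^2$, we have $\eta_t=\alpha/S_t^{1/2+\ve}$, and the sequence $(S_t)$ is non-decreasing with $S_0=\beta>0$ and increments bounded above by $G_1^2$. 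All three bullets rest on these basic facts.

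For the first bullet, I would prove that $\sum_{t=0}^\infty \eta_t^2\|\nabla_M f(x_t,\omega_t)\|_{x_t}^2$ is bounded by a \emph{deterministic} constant. Writing each term as $\alpha^2(S_{t+1}-S_t)/S_t^{1+2\ve}$, the natural attempt is to invoke Lemma~\ref{lemma-Li-Orabona-2}, but the denominator of $\eta_t$ sums $\|\nabla_M f\|^2$ only up to index $t-1$, whereas Lemma~\ref{lemma-Li-Orabona-2} requires the sum up to index $t$. I would instead use a telescoping integral estimate: the monotonicity of $x\mapsto x^{-(1+2\ve)}$ gives $(S_{t+1}-S_t)/S_{t+1}^{1+2\ve}\le \int_{S_t}^{S_{t+1}} x^{-(1+2\ve)}\,dx=(S_t^{-2\ve}-S_{t+1}^{-2\ve})/(2\ve)$, and the uniform ratio bound $S_{t+1}/S_t\le 1+G_1^2/\beta=:K$ then converts this into $(S_{t+1}-S_t)/S_t^{1+2\ve}\le K^{1+2\ve}(S_t^{-2\ve}-S_{t+1}^{-2\ve})/(2\ve)$. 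Summing telescopes to a bound of the form $\alpha^2 K^{1+2\ve}/(2\ve\beta^{2\ve})$, which is independent of $\omega$.

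For the second bullet, I would feed the deterministic bound from the first bullet into Lemma~\ref{lemma-bound-F-gradient-sum} to conclude that, for every $T\ge 1$, $E\bigl(\sum_{t=0}^T \eta_t\|\nabla F(x_t)\|_{x_t}^2\bigr)$ is bounded above by a finite constant depending only on $F(x_0)$, $F^\ast$, $C_1$, $\alpha$, $\beta$, $\ve$, and $G_1$, uniformly in $T$. Since the partial sums are non-decreasing in $T$, the monotone convergence theorem yields $E\bigl(\sum_{t=0}^\infty \eta_t\|\nabla F(x_t)\|_{x_t}^2\bigr)<\infty$, whence the series is almost surely finite.

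For the third bullet, the bounded-increment estimate $S_t\le\beta+tG_1^2$ yields the deterministic lower bound $\eta_t\ge \alpha/(\beta+tG_1^2)^{1/2+\ve}$. The hypothesis $0<\ve\le 1/2$ puts $1/2+\ve\in(1/2,1]$, so the comparison series $\sum_t(\beta+tG_1^2)^{-(1/2+\ve)}$ diverges, and hence so does $\sum_t\eta_t$. The only genuinely non-routine element in the whole lemma is the telescoping integral estimate in the first bullet, forced by the slight indexing mismatch with Lemma~\ref{lemma-Li-Orabona-2}; once it is in place, the remaining two bullets are essentially immediate consequences.
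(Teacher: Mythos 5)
Your proposal is correct, and the second and third bullets follow the paper's own argument essentially verbatim (uniform bound $G_1$ from the confinement, Lemma \ref{lemma-bound-F-gradient-sum} plus monotone convergence, and the comparison $\eta_t\geq \alpha/(\beta+tG_1^2)^{1/2+\ve}$ with $\frac12+\ve\leq 1$). The genuine difference is in the first bullet. You correctly identify the indexing mismatch: the denominator of $\eta_t$ omits the index-$t$ term, so Lemma \ref{lemma-Li-Orabona-2} does not apply directly. The paper resolves this by the decomposition $\sum_t \eta_t^2\|\nabla_M f(x_t,\omega_t)\|_{x_t}^2=\sum_t \eta_{t+1}^2\|\nabla_M f(x_t,\omega_t)\|_{x_t}^2+\sum_t(\eta_t^2-\eta_{t+1}^2)\|\nabla_M f(x_t,\omega_t)\|_{x_t}^2$: the first piece matches Lemma \ref{lemma-Li-Orabona-2} exactly and is bounded by $\frac{\alpha^2}{2\ve\beta^{2\ve}}$, while the second telescopes after bounding $\|\nabla_M f\|_{x_t}^2$ by $G_1^2$, contributing $G_1^2\alpha^2/\beta^{1+2\ve}$. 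You instead bypass Lemma \ref{lemma-Li-Orabona-2} entirely with the integral comparison $(S_{t+1}-S_t)/S_{t+1}^{1+2\ve}\leq\frac{1}{2\ve}(S_t^{-2\ve}-S_{t+1}^{-2\ve})$ combined with the ratio bound $S_{t+1}/S_t\leq 1+G_1^2/\beta$, arriving at the deterministic constant $\alpha^2(1+G_1^2/\beta)^{1+2\ve}/(2\ve\beta^{2\ve})$. Both routes yield a deterministic (sample-path-independent) bound, which is what Lemma \ref{lemma-bound-F-gradient-sum} needs downstream; the paper's version is slightly shorter because it reuses a packaged lemma, whereas yours is self-contained and in effect reproves that lemma via the integral test. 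Note that both routes lean on $\{x_t\}\subset K_1$ and the finiteness of $G_1$; your version uses $G_1$ in the ratio bound $K$, the paper's uses it in the telescoped remainder term, so neither avoids the confinement hypothesis.
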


\begin{proof}[Proof. (Following \cite{Li-Orabona:2019}.)]
\[
\sum_{t=0}^\infty \eta_t^2\| \nabla_M f(x_t, \omega_t)\|_{x_t}^2 = \sum_{t=0}^\infty \eta_{t+1}^2\| \nabla_M f(x_t, \omega_t)\|_{x_t}^2 + \sum_{t=0}^\infty (\eta_t^2-\eta_{t+1}^2)\| \nabla_M f(x_t, \omega_t)\|_{x_t}^2.
\]
By Lemma \ref{lemma-Li-Orabona-2}, for any $T\geq 1$,
\[
\sum_{t=0}^T \eta_{t+1}^2\| \nabla_M f(x_t, \omega_t)\|_{x_t}^2 = \sum_{t=0}^T \frac{\alpha^2 \| \nabla_M f(x_t, \omega_t)\|_{x_t}^2}{(\beta+\sum_{i=0}^{t} \|\nabla_M f(x_i,\omega_i)\|_{x_i}^2)^{1+2\ve}} \leq \frac{\alpha^2}{2\ve \beta^{2\ve}}.
\]
So 
\[
\sum_{t=0}^\infty \eta_{t+1}^2\| \nabla_M f(x_t, \omega_t)\|_{x_t}^2 \leq \frac{\alpha^2}{2\ve \beta^{2\ve}}.
\]
Note that $\{\eta_t\}$ is a decreasing sequence of positive numbers. 
Since $\{x_{t}\}_{t=0}^\infty \subset K_1$, we have 
\[
\sum_{t=0}^\infty (\eta_t^2-\eta_{t+1}^2)\| \nabla_M f(x_t, \omega_t)\|_{x_t}^2 \leq \sum_{t=0}^\infty (\eta_t^2-\eta_{t+1}^2) G_1^2\leq G_1^2 \eta_0^2 = G_1^2 \frac{\alpha^2}{\beta^{1+2\ve}},
\] 
where $G_1$ is given in Equation \eqref{eq-G-1-def}.
Thus, $\sum_{t=0}^\infty \eta_{t}^2\| \nabla_M f(x_t, \omega_t)\|_{x_t}^2 \leq \frac{\alpha^2}{2\ve \beta^{2\ve}}+ G_1^2 \frac{\alpha^2}{\beta^{1+2\ve}}$ and is therefore convergent.

By Lemma \ref{lemma-bound-F-gradient-sum}, we have that 
\begin{eqnarray*}
E(\sum_{t=0}^\infty \eta_t\| \nabla F(x_t)\|_{x_t}^2) & \leq &  F(x_0) - F^{\ast} + \frac{C_1}{2}E(\sum_{t=0}^\infty \eta_t^2 \|\nabla_M f(x_t,\omega_t)\|_{x_t}^2) \\
& \leq & F(x_0) - F^{\ast} + \frac{C_1}{2}(\frac{\alpha^2}{2\ve \beta^{2\ve}}+ G_1^2 \frac{\alpha^2}{\beta^{1+2\ve}})<\infty.
\end{eqnarray*}
This implies that the probability of $\sum_{t=0}^\infty \eta_t\| \nabla F(x_t)\|_{x_t}^2<\infty$ is $1$. In other words, $\sum_{t=0}^\infty \eta_t\| \nabla F(x_t)\|_{x_t}^2$ converges almost surely.

Finally, for the series  $\sum_{t=0}^\infty \eta_t$, we have that, since $\{x_{t}\}_{t=0}^\infty \subset K_1$,
\[
\sum_{t=0}^\infty \eta_t = \sum_{t=0}^\infty\frac{\alpha}{(\beta+\sum_{i=0}^{t-1} \|\nabla_M f(x_i,\omega_i)\|_{x_i}^2)^{\frac{1}{2}+\ve}} \geq  \sum_{t=0}^\infty\frac{\alpha}{(\beta+tG_1^2)^{\frac{1}{2}+\ve}} =\infty
\]
where $G_1$ is given in Equation \eqref{eq-G-1-def} and $\frac{1}{2}+\ve \leq 1$ by Assumption \eqref{assumption-constants} in Theorem \ref{thm-Ada-SGD-mfd}.
\end{proof}

\begin{proof}[Proof of Theorem \ref{thm-Ada-SGD-mfd}. (Following \cite{Li-Orabona:2019} mostly.)]
Let $G_1$ be given in Equation \eqref{eq-G-1-def}, and $C_1$, $C_2$ be the positive constants given in Lemmas \ref{lemma-F-locally-Lipschitz} and \ref{lemma-gradient-difference}, where we take $r=\frac{\alpha G_1}{\beta^{\frac{1}{2}+\ve}}$ in Lemma \ref{lemma-gradient-difference}. By Lemma \ref{lemma-x-t-confined}, we have
\begin{eqnarray*}
&& \left|\|\nabla F(x_{t+1})\|_{x_{t+1}}^2 -\|\nabla F(x_{t})\|_{x_{t}}^2\right| = (\|\nabla F(x_{t+1})\|_{x_{t+1}}+\|\nabla F(x_{t})\|_{x_{t}})~\big{|}\|\nabla F(x_{t+1})\|_{x_{t+1}} -\|\nabla F(x_{t})\|_{x_{t}}\big{|} \\
& \leq & 2G_1\big{|}\|\nabla F(x_{t+1})\|_{x_{t+1}} -\|\nabla F(x_{t})\|_{x_{t}}\big{|}\\
& = & 2G_1\big{|}\|\nabla F(R_{x_t}\left(-\eta_t \nabla_M f(x_t,\omega_t)\right))\|_{R_{x_t}\left(-\eta_t \nabla_M f(x_t,\omega_t)\right)} -\|\nabla F(x_{t})\|_{x_{t}}\big{|} \\
& \leq & 2G_1(\big{|}\|\nabla F(R_{x_t}\left(-\eta_t \nabla_M f(x_t,\omega_t)\right))\|_{R_{x_t}\left(-\eta_t \nabla_M f(x_t,\omega_t)\right)} -\|\nabla (F\circ R_{x_t})\left(-\eta_t \nabla_M f(x_t,\omega_t)\right)\|_{x_{t}}\big{|} \\
&& +  \big{|} \|\nabla (F\circ R_{x_t})\left(-\eta_t \nabla_M f(x_t,\omega_t)\right)\|_{x_{t}}  -\|\nabla F(x_{t})\|_{x_{t}}\big{|})
\end{eqnarray*}
By Lemmas \ref{lemma-x-t-confined} and \ref{lemma-gradient-difference}, 
\begin{eqnarray*}
&& \big{|}\|\nabla F(R_{x_t}\left(-\eta_t \nabla_M f(x_t,\omega_t)\right))\|_{R_{x_t}\left(-\eta_t \nabla_M f(x_t,\omega_t)\right)} -\|\nabla (F\circ R_{x_t})\left(-\eta_t \nabla_M f(x_t,\omega_t)\right)\|_{x_{t}}\big{|} \\
& \leq &  C_2 \eta_t  \|\nabla_M f(x_t,\omega_t)\|_{x_{t}} \leq C_2 G_1 \eta_t.
\end{eqnarray*}
Note that, by Lemma \ref{lemma-x-t-confined}, $x_t \in K_1$ for $t \geq 0$. Therefore, $\|\eta_t \nabla_M f(x_t,\omega_t)\|_{x_{t}} \le \frac{\alpha G_1}{\beta^{\frac{1}{2}+\ve}} \le \kappa G_1$. Consequencetly, by Lemma \ref{lemma-F-locally-Lipschitz}, we have 
\[
\big{|} \|\nabla (F\circ R_{x_t})\left(-\eta_t \nabla_M f(x_t,\omega_t)\right)\|_{x_{t}} -\|\nabla F(x_{t})\|_{x_{t}}\big{|} \leq C_1\eta_t  \|\nabla_M f(x_t,\omega_t)\|_{x_{t}}\leq C_1 G_1 \eta_t.
\]
So
\begin{equation}\label{eq-gradient-square-difference-bound}
\left|\|\nabla F(x_{t+1})\|_{x_{t+1}}^2 -\|\nabla F(x_{t})\|_{x_{t}}^2\right| \leq 2  G_1^2 (C_1+C_2) \eta_t.
\end{equation}
In summary, we have that
\begin{itemize}
	\item $\sum_{t=0}^\infty \eta_t\| \nabla F(x_t)\|_{x_t}^2$ converges almost surely by Lemma \ref{lemma-gradient-square-sum-converge},
	\item $\sum_{t=0}^\infty \eta_t = \infty$ by Lemma \ref{lemma-gradient-square-sum-converge},
	\item $\left|\|\nabla F(x_{t+1})\|_{x_{t+1}}^2 -\|\nabla F(x_{t})\|_{x_{t}}^2\right| \leq 2  G_1^2 (C_1+C_2) \eta_t$ by Inequality \eqref{eq-gradient-square-difference-bound}.
\end{itemize}
Thus, by Lemma \ref{lemma-Li-Orabona-1}, $\{\|\nabla F(x_t)\|_{x_t}\}_{t=0}^\infty$ converges almost surely to $0$. By Lemma \ref{lemma-x-t-confined}, $\{x_{t}\}_{t=0}^\infty \subset K_1$. So $\{x_{t}\}_{t=0}^\infty$ has convergent subsequences, which converge almost surely to stationary points of $F$.
\end{proof}

\section{An Application: Weighted Low-rank Approximation}\label{sec-WLRA-Ada-SGD}

\subsection{The Weighted Low-Rank Approximation Problem}

\begin{problem}[The Weighted Low-Rank Approximation Problem]\label{prob-WLRA}
Assume that:
\begin{enumerate}
    \item \label{problem-assumption-1} $m$, $n$ and $k$ are fixed positive integers satisfying $k \leq \min\{m,n\}$,
    \item \label{problem-assumption-2} $A=[a_{i,j}] \in \R^{m\times n}$ is a given matrix of constants,
    \item \label{problem-assumption-3} $W=[w_{i,j}] \in \R^{m\times n}$ is a given matrix of weights satisfying $w_{i,j} \geq 0$ for $(i,j) \in \{1,2,\dots,m\}\times \{1,2,\dots,n\}$ and $\sum_{i=1}^m \sum_{j=1}^n w_{i,j} = 1$,
\end{enumerate}
where $\R^{m\times n}$ is the space of $m \times n$ real matrices. Define $\hat{F}:\R^{m\times n} \rightarrow \R$ by \newline
$\hat{F}(P) = \sum_{i=1}^m \sum_{j=1}^n w_{i,j}(a_{i,j}-p_{i,j})^2$ for $P=[p_{i,j}] \in \R^{m\times n}$. Solve for
\begin{equation*}
\mathrm{argmin}\{\hat{F}(P) ~|~ P\in \R^{m\times n},~  \rank P \leq k\}.
\end{equation*}
\end{problem}

Problem \ref{prob-WLRA} and its special case, the Matrix Completion Problem, are studied extensively in the recent literature. (See, for example, \cite{Bertsimas-Cory-Lo-Pauphilet:2024,Yan-Tang-Li:2024,Yan-Zhang:2024,Lee:2024,Kelner-Li-Liu-Sidford-Tian:2023,Chakraborty-Dey:2023,Yang-Ma:2023,Bordenave-Coste-Nadakuditi:2023,Wan-Cheng:2023,Boumal-Absil:2011}). 

Problem \ref{prob-WLRA} is known to be NP-hard \cite{Gillis-Glineur:2011}. One existing approach to this problem is to reformulate it as an uncontrained problem on $\R^{m\times k}\times \R^{n\times k}$ and then estimate the solution to the uncontrained problem by various first order or second order algorithms on the Euclidean space. (See, for instance, \cite{Ban-Woodruff-Zhang:2019,Boumal-Absil:2011,Srebro-Jaakkola:2003}). We approach Problem \ref{prob-WLRA} by reformulating it as an unconstrained problem on a Riemmanian manifold in \cite{Xu-Yang-Wu-CSGD}.

In the current paper, we continue with this approach but using adaptive learning rates.
To guarantee the convergence of the algorithm, we consider a regularized version of Problem \ref{prob-WLRA} below.

\begin{problem}[A Regularized Weighted Low Rank Approximation Problem]\label{prob-RWLRA-1}
Under Assumptions (\ref{problem-assumption-1})-(\ref{problem-assumption-3}) in Problem \ref{prob-WLRA}, fix a positive number $\lambda$ and define $F:\R^{m\times n} \rightarrow \R$ by ${F(P) = \sum_{i=1}^m \sum_{j=1}^n w_{i,j}(a_{i,j}-p_{i,j})^2+\lambda \|P\|_F^2}$ for $P=[p_{i,j}] \in \R^{m\times n}$ where $\|P\|_F :=\sqrt{\sum_{i=1}^m \sum_{j=1}^n p_{i,j}^2}$ is the Frobenius norm. Solve for
\[
\mathrm{argmin}\{F(P) ~|~ P\in \R^{m\times n},~  \rank P \leq k\}.
\]
\end{problem}

To reformulate Problem \ref{prob-RWLRA-1} as an unconstrained problem, let us quickly review the Reduced Singular Value Decomposition. 
The $n\times k$ Stiefel manifold $V_k(\R^n)$ is defined by
\[
V_k(\R^n)  =  \{V \in \R^{n\times k}~|~ V^T V =I_k\},
\]
where $I_k$ is the $k \times k$ identity matrix. Define a function $D^{k\times k}_k: \R^k \rightarrow \R^{k\times k}$ by 
\begin{equation}\label{eq-def-D-k-by-k}
D^{k\times k}_k([x_1,\dots,x_k]^T)=[d_{i,j}], \text{ where } d_{i,j}=\begin{cases} x_i & \text{if } i=j, \\ 0 & \text{otherwise}. \end{cases}
\end{equation} 
For $P \in \R^{m\times n}$, $\rank P \leq k$ if any only if it admits the following Reduced Singular Value Decomposition
\begin{equation}\label{eq-rsvd}
P = U D^{k\times k}_k(\mathbf{x}) V^T,
\end{equation}
where $U \in V_k(\R^m)$, $V \in V_k(\R^n)$ and $\mathbf{x} \in \R^k$.

The Reduced Singular Value Decomposition motivates us to reformulate Problem \ref{prob-RWLRA-1} into Problem \ref{prob-RWLRA-1-reform}, which is an unconstrained problem over the Riemannian manifold $V_k(\R^m)\times \R^k \times V_k(\R^n)$. 
We then apply Theorem \ref{thm-Ada-SGD-mfd} to design a stochastic gradient descent with adaptive learning rates over this manifold for Problem \ref{prob-RWLRA-1-reform}. 

\begin{problem}[A Reformulated Regularized Weighted Low Rank Approximation Problem]\label{prob-RWLRA-1-reform}
Under Assumptions (\ref{problem-assumption-1})-(\ref{problem-assumption-3}) in Problem \ref{prob-WLRA}, fix a positive number $\lambda$ and let $F:\R^{m\times n} \rightarrow \R$ be as in Problem \ref{prob-RWLRA-1}. Solve for
\[
\mathrm{argmin}\{F(U D^{k\times k}_k(\mathbf{x}) V^T) ~|~ (U,\mathbf{x},V)  \in V_k(\R^m)\times \R^k \times V_k(\R^n) \}.
\]
\end{problem}
For $\mathbf{x} = [x_1,\dots,x_k]^T \in \R^k$ the standard Euclidean norm of $\mathbf{x}$ is given by $\|\mathbf{x}\| := \sqrt{\sum_{l=1}^k x_l^2}$. Note that, for $(U,\mathbf{x},V)  \in V_k(\R^m)\times \R^k \times V_k(\R^n)$ and $P=U D^{k\times k}_k(\mathbf{x}) V^T$,
\begin{equation}\label{eq-norm-equal}
\|P\|_F = \|\mathbf{x}\|.
\end{equation}

Next, we apply Theorem \ref{thm-Ada-SGD-mfd} to design a convergent stochastic gradient descent algorithm with adaptive learning rates for Problem \ref{prob-RWLRA-1-reform}. 

\subsection{The Random Function and the Retraction}

The basic setup here is similar to that in \cite{Xu-Yang-Wu-CSGD}. 
For the convenience of the reader, we still present it here.

Recall that we normalize the weight matrix $W=[w_{i,j}]$ so that $\sum_{i=1}^m\sum_{j=1}^n w_{i,j}=1$. 
\begin{definition} \label{def-mu-prob-space}
Define the probability space $\Omega_{m,n}$ by
\begin{equation}
\Omega_{m,n} = \{(i,j) \in \mathbb{Z}^2 ~|~  1 \le i \le m,\  1 \le j \le n \}.
\end{equation}
The probability measure $\mu$ on $\Omega_{m,n}$ is given by $\mu(\{(i,j)\})=w_{i,j}$ for every $(i,j) \in \Omega_{m,n}$.
\end{definition}

\begin{definition}\label{def-func-G}
For $F:\R^{m\times n} \rightarrow \R$ given in Problem \ref{prob-RWLRA-1-reform}, define $G:V_k(\R^m)\times \R^k \times V_k(\R^n)\rightarrow \R$ by
\begin{equation}\label{eq-def-G-RWLRA-1}
G(U,\mathbf{x},V) = F(U D^{k\times k}_k(\mathbf{x}) V^T),
\end{equation}
for all $(U,\mathbf{x},V) \in V_k(\R^m)\times \R^k \times V_k(\R^n)$. 
\end{definition}

\begin{definition}\label{def-random-functions-tau-gamma-RWLRA-1}
For $(\tau,\gamma)\in \Omega_{m,n}$, define $\hat{f}_{\tau,\gamma}:\R^{m\times n}\rightarrow \R$ and $f_{\tau,\gamma}:\R^{m\times n}\rightarrow \R$ by 
\begin{eqnarray}
\label{eq-def-hat-f-tau-gamma-RWLRA-1} \hat{f}_{\tau,\gamma}(P): & = & (a_{\tau,\gamma}-p_{\tau,\gamma})^2,\\
\label{eq-def-f-tau-gamma-RWLRA-1} f_{\tau,\gamma} (P): & = & \hat{f}_{\tau,\gamma} (P) + \lambda \|P\|_F^2 
= (a_{\tau,\gamma}-p_{\tau,\gamma})^2 + \lambda \|P\|_F^2,
\end{eqnarray}
for all $P=[p_{i,j}]\in \R^{m\times n}$, and define $g_{\tau,\gamma}:V_k(\R^m)\times \R^k \times V_k(\R^n)\rightarrow \R$ by
\begin{equation}\label{eq-def-hat-g-tau-gamma-RWLRA-1} 
g_{\tau,\gamma}(U,\mathbf{x},V): = f_{\tau,\gamma} (U D^{k\times k}_k(\mathbf{x}) V^T) = \hat{f}_{\tau,\gamma} (U D^{k\times k}_k(\mathbf{x}) V^T) + \lambda \| \mathbf{x} \|^2,
\end{equation}
for all $(U,\mathbf{x},V)  \in V_k(\R^m)\times \R^k \times V_k(\R^n)$, where $D^{k\times k}_k(\mathbf{x})$ is defined in Equation \eqref{eq-def-D-k-by-k}. 
The random function that we use is $g:V_k(\R^m)\times \R^k \times V_k(\R^n) \times \Omega_{m,n} \rightarrow \R$ given by $g(U,\mathbf{x},V;\tau,\gamma) = g_{\tau,\gamma}(U,\mathbf{x},V)$.
\end{definition}

\begin{lemma}\label{lemma-g-tau-gamma-RWLRA-1-expectation}
Let $G: V_k(\R^m)\times \R^k \times V_k(\R^n) \rightarrow \R$ be as in Definition \ref{def-func-G}. 
We take the expectation of $g$ over the probability space $\Omega_{m,n}$ with respect to the random input $(\tau, \gamma)$ with the probability distribution $\mu$ given in Definition \ref{def-mu-prob-space}.
For $(U,\mathbf{x},V) \in V_k(\R^m)\times \R^k \times V_k(\R^n)$, we have that the expectation of $g$ is
\begin{equation*}
E_{(\tau,\gamma)\sim \mu}(g_{\tau,\gamma}(U,\mathbf{x},V)) = G(U,\mathbf{x},V).
\end{equation*}
\end{lemma}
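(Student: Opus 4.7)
The plan is to unwind the definitions and perform a direct computation, since this lemma is essentially a bookkeeping check that the random function $g$ has the desired expectation.

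First I would write out $E_{(\tau,\gamma)\sim \mu}(g_{\tau,\gamma}(U,\mathbf{x},V))$ using the definition of $\mu$ from Definition \ref{def-mu-prob-space}, obtaining
\[
E_{(\tau,\gamma)\sim \mu}(g_{\tau,\gamma}(U,\mathbf{x},V)) = \sum_{i=1}^m \sum_{j=1}^n w_{i,j}\, g_{i,j}(U,\mathbf{x},V).
\]
Next, I would substitute the definition of $g_{\tau,\gamma}$ from Equation \eqref{eq-def-hat-g-tau-gamma-RWLRA-1}, setting $P := U D^{k\times k}_k(\mathbf{x}) V^T$, so that the sum splits as
\[
\sum_{i,j} w_{i,j} (a_{i,j} - p_{i,j})^2 + \lambda \|\mathbf{x}\|^2 \sum_{i,j} w_{i,j}.
\]

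Then I would apply the two normalizations already on hand: the assumption $\sum_{i,j} w_{i,j} = 1$ from Assumption (\ref{problem-assumption-3}) of Problem \ref{prob-WLRA}, which reduces the second term to $\lambda \|\mathbf{x}\|^2$, and Equation \eqref{eq-norm-equal}, which identifies $\|\mathbf{x}\|^2 = \|P\|_F^2$. This gives
\[
\sum_{i,j} w_{i,j}(a_{i,j}-p_{i,j})^2 + \lambda \|P\|_F^2,
\]
which is precisely $F(P)$ by the definition in Problem \ref{prob-RWLRA-1}. Finally, by Equation \eqref{eq-def-G-RWLRA-1} this equals $G(U,\mathbf{x},V)$, completing the computation.

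There is no substantive obstacle here; the only thing to be careful about is not to double-count the regularization term, i.e. to remember that $\lambda \|\mathbf{x}\|^2$ appears inside each $g_{\tau,\gamma}$ rather than being an additional summand, and to invoke $\sum_{i,j}w_{i,j}=1$ to pull it outside the expectation correctly.
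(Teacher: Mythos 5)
Your computation is correct and complete; the paper in fact states this lemma without proof, treating it as an immediate consequence of the definitions, and your direct expansion of the expectation using $\sum_{i,j} w_{i,j}=1$ and Equation \eqref{eq-norm-equal} is exactly the verification it implicitly relies on.
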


To define the retraction, we first describe the tangent spaces and a retraction of the Stiefel manifold in Lemma \ref{lemma-stiefel}.

\begin{definition}\label{def-qf}
Assume that $k\leq n$. For an $n\times k$ real matrix $C$ with linearly independent columns, define $\qf(C)=Q$ in the $QR$ decomposition $C=QR$, where 
\begin{itemize}
	\item $Q\in V_k(\R^n)$,
	\item $R$ is a $k \times k$ upper triangular matrix with positive entries along its diagonal.
\end{itemize}
Computationally, $\qf(C)$ can be obtained by applying the Gram-Schmidt Process on the columns of $C$, scaling the resulting orthogonal set of vectors into an orthonormal set and then using this orthonormal set of vectors as the columns of $\qf(C)$.
\end{definition}

\begin{lemma}\cite[Examples 3.5.2, 3.6.2 and 4.1.3]{AMS} \label{lemma-stiefel}
The $n \times k$ Stiefel manifold $V_k(\R^n)$ is a Riemannnian submanifold of $\R^{n\times k}$. Moreover:
\begin{itemize}
	\item For $X \in V_k(\R^n)$, the tangent space of $V_k(\R^n)$ at $X$ is $T_X V_k(\R^n) = \{ Z \in \R^{n\times k} ~|~X^TZ+Z^TX=0\}$.
	\item Denote by $\Pi_X$ the orthogonal projection $\Pi_X: \R^{n\times k} \rightarrow T_X V_k(\R^n)$. Then, for any $\xi \in \R^{n\times k}$, $\Pi_X(\xi)= (I_n -XX^T)\xi +\frac{1}{2}X(X^T\xi-\xi^TX)$.
	\item For any $X\in V_k(\R^n)$ and $Z \in T_X  V_k(\R^n)$, $X+Z\in \R^{n\times k}$ has linearly independent columns. Define $R^{V_k(\R^n)} :TV_k(\R^n)\rightarrow V_k(\R^n)$ by $R^{V_k(\R^n)}_X(Z) = \qf(X+Z)$. Then $R^{V_k(\R^n)}$ is a retraction on $V_k(\R^n)$.
\end{itemize}
\end{lemma}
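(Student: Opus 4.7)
The plan is to obtain all three statements by presenting $V_k(\R^n)$ as a regular level set of the map $\Phi : \R^{n\times k} \to \Sym(k)$ defined by $\Phi(X) = X^TX - I_k$, where $\Sym(k)$ denotes the space of symmetric $k\times k$ matrices. First I would verify that at every $X \in \Phi^{-1}(0)$ the differential $d\Phi_X(Z) = X^TZ + Z^TX$ is surjective: for any symmetric $S$, the choice $Z = \tfrac{1}{2}XS$ gives $d\Phi_X(Z) = S$ since $X^TX = I_k$. The Regular Value Theorem then identifies $V_k(\R^n)$ as an embedded submanifold of $\R^{n\times k}$ of dimension $nk - k(k+1)/2$, and its Riemannian structure is inherited from the Frobenius inner product on $\R^{n\times k}$. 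The tangent space description is immediate from $T_X V_k(\R^n) = \ker d\Phi_X = \{Z : X^TZ + Z^TX = 0\}$.

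Next I would derive the projection formula by identifying the normal space. The orthogonal complement of $T_X V_k(\R^n)$ in $\R^{n\times k}$ is precisely $\{XS : S \in \Sym(k)\}$: for $Z$ tangent and $S$ symmetric, $\Tr(Z^TXS) = \tfrac{1}{2}\Tr((Z^TX + X^TZ)S) = 0$, and the dimensions add to $nk$. Writing $\xi = \Pi_X(\xi) + XS$ and multiplying on the left by $X^T$ gives $X^T\xi = X^T\Pi_X(\xi) + S$; since $X^T\Pi_X(\xi)$ is skew-symmetric, taking the symmetric part forces $S = \tfrac{1}{2}(X^T\xi + \xi^TX)$. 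Substituting back and simplifying yields the stated formula $\Pi_X(\xi) = (I_n - XX^T)\xi + \tfrac{1}{2}X(X^T\xi - \xi^TX)$.

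For the retraction, the positivity of $(X+Z)^T(X+Z) = I_k + Z^TZ$, which uses $X^TZ + Z^TX = 0$, shows that $X+Z$ has linearly independent columns, so $\qf(X+Z)$ is well defined and smooth. The base-point condition $R^{V_k(\R^n)}_X(\mathbf{0}_X) = \qf(X) = X$ is immediate since the columns of $X$ are already orthonormal. The step I expect to be the main obstacle is verifying $dR^{V_k(\R^n)}_X(\mathbf{0}_X) = \id_{T_X V_k(\R^n)}$. For this I would fix $Z \in T_X V_k(\R^n)$ and write the smooth QR family $X + tZ = Q(t)R(t)$ with $Q(t) \in V_k(\R^n)$ and $R(t)$ upper triangular with positive diagonal, so that $Q(0) = X$ and $R(0) = I_k$. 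Differentiating at $t=0$ gives $Z = \dot{Q}(0) + X\dot{R}(0)$, and multiplying on the left by $X^T$ yields $X^TZ = X^T\dot{Q}(0) + \dot{R}(0)$. Now $X^TZ$ is skew-symmetric (by tangency), $X^T\dot{Q}(0)$ is skew-symmetric (by differentiating $Q(t)^TQ(t) = I_k$ at $t=0$), and $\dot{R}(0)$ is upper triangular; any matrix that is simultaneously skew-symmetric and upper triangular must vanish, so $\dot{R}(0) = 0$ and therefore $\dot{Q}(0) = Z$. This establishes $dR^{V_k(\R^n)}_X(\mathbf{0}_X) = \id$, completing the verification of all retraction axioms.
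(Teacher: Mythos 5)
Your proof is correct, and it follows the same route as the source the paper cites for this lemma (the paper itself offers no proof, deferring entirely to \cite[Examples 3.5.2, 3.6.2 and 4.1.3]{AMS}): realizing $V_k(\R^n)$ as the regular level set of $X \mapsto X^TX - I_k$, identifying the normal space as $\{XS : S \in \Sym(k)\}$ to get the projection, and killing $\dot{R}(0)$ via the skew-symmetric--upper-triangular argument to verify the local rigidity of $\qf$. All steps check out; the only detail left implicit is the smooth dependence of the $QR$ factors on a full-rank matrix, which is standard and is what makes $R^{V_k(\R^n)}$ globally $C^1$ rather than just differentiable along rays.
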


The manifold $V_k(\R^m)\times \R^k \times V_k(\R^n)$ is a Riemannian submanifold of the Euclidean space $\R^{m\times k} \times \R^{k} \times \R^{n\times k}$. 
Note that the tangent space of the manifold $V_k(\R^m)\times \R^k \times V_k(\R^n)$ at $(U,\mathbf{x},V) \in V_k(\R^m)\times \R^k \times V_k(\R^n)$ is $T_U V_k(\R^m)\times \R^k \times T_VV_k(\R^n)$. We are ready to describe the retraction on the manifold $V_k(\R^m)\times \R^k \times V_k(\R^n)$.

\begin{definition}\label{def-retraction-GS-prod}
For $(U,\mathbf{x},V) \in V_k(\R^m)\times \R^k \times V_k(\R^n)$ and $(Y,\hat{\mathbf{x}},Z) \in  T_{U}V_k(\R^m)\times \R^k \times T_{V}V_k(\R^n)$, define $R_{(U,\mathbf{x},V)}(Y,\hat{\mathbf{x}},Z) = (\qf(U+Y),\mathbf{x} +\hat{\mathbf{x}}, \qf(V +Z))$. Then, by Lemma \ref{lemma-stiefel}, $R: TV_k(\R^m)\times \R^k \times TV_k(\R^n) \rightarrow V_k(\R^m)\times \R^k \times V_k(\R^n)$ is a retraction on $V_k(\R^m)\times \R^k \times V_k(\R^n)$.
\end{definition}

The following lemma is very useful in finding the gradient of the restriction of a function on a Riemannian submanifold.

\begin{lemma} \cite[Equation (3.37)]{AMS} \label{lemma-gradient-submfd}
Let $\overline{M}$ be a Riemannian manifold and $f: \overline{M} \rightarrow \R$ a differentiable function. Assume that $M$ is a Riemannian submanifold of $\overline{M}$. For any $x \in M$, denote by $\pi_x:T_x\overline{M} \rightarrow T_x M$ the orthogonal projection. Then $\nabla (f|_M)(x) = \pi_x(\nabla f(x))$ for any $x \in M$.
\end{lemma}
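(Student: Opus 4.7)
The plan is to deduce the identity directly from the defining property of the Riemannian gradient and the fact that, since $M$ is a Riemannian submanifold of $\overline{M}$, the inner product $\langle \ast, \ast \rangle_x$ on $T_x M$ is the restriction of the inner product on $T_x \overline{M}$.

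First I would recall that the Riemannian gradient $\nabla h(x)$ of a differentiable function $h$ on a Riemannian manifold is characterized as the unique tangent vector at $x$ dual to the differential $dh|_x$ via the Riemannian inner product, i.e.\ the unique vector satisfying $\langle \nabla h(x), v \rangle_x = dh|_x(v)$ for every $v$ in the tangent space at $x$. Applying this to $h = f|_M$ and observing that $d(f|_M)|_x$ is simply the restriction of $df|_x$ to $T_x M$, the quantity $\nabla(f|_M)(x)$ is thus determined by the identity $\langle \nabla(f|_M)(x), v\rangle_x = df|_x(v)$ for every $v \in T_x M$.

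Next I would compare this with the decomposition induced by $\pi_x$. Writing $\nabla f(x) = \pi_x(\nabla f(x)) + w$, where $w$ lies in the orthogonal complement of $T_x M$ inside $T_x \overline{M}$, any $v \in T_x M$ satisfies $\langle w, v\rangle_x = 0$, so
\begin{equation*}
df|_x(v) = \langle \nabla f(x), v\rangle_x = \langle \pi_x(\nabla f(x)), v\rangle_x + \langle w, v\rangle_x = \langle \pi_x(\nabla f(x)), v\rangle_x.
\end{equation*}
Since $\pi_x(\nabla f(x)) \in T_x M$ and it pairs with every $v \in T_x M$ the same way the characterizing gradient does, the uniqueness of the dual vector forces $\nabla(f|_M)(x) = \pi_x(\nabla f(x))$.

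There is no real obstacle here; the only point that requires care is the verification that $d(f|_M)|_x$ and $df|_x$ agree on $T_x M \subset T_x \overline{M}$ and that the metric on $T_x M$ is inherited from that on $T_x \overline{M}$, both of which are immediate from the definition of a Riemannian submanifold. The bulk of the argument is a one-line application of orthogonal decomposition and the uniqueness clause in the definition of the gradient.
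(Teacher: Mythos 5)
Your proof is correct. The paper gives no proof of this lemma at all---it is quoted directly from \cite[Equation (3.37)]{AMS}---and your argument is exactly the standard one found there: the gradient is characterized by duality with the differential, $d(f|_M)|_x$ is the restriction of $df|_x$ to $T_xM$ because the inclusion's differential is the inclusion of tangent spaces, the metric on $T_xM$ is inherited, and the orthogonal decomposition $\nabla f(x) = \pi_x(\nabla f(x)) + w$ with $w \perp T_xM$ together with uniqueness of the dual vector yields the claim.
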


Now let $f: \R^{m\times n} \rightarrow \R$ be any differentiable function. Write 
\begin{itemize}
	\item $P=U D^{k\times k}_k(\mathbf{x}) V^T$ where $(U,\mathbf{x},V) \in \R^{m\times k} \times \R^{k} \times \R^{n\times k}$,
	\item $P=[p_{i,j}] \in \R^{m\times n}$, $U=[u_{i,j}] \in \R^{m\times k}$, $V=[v_{i,j}] \in \R^{n\times k}$ and $\mathbf{x}=[x_1,\dots,x_k]^T\in \R^k$.
\end{itemize}
Consider $f=f(P) =f(U D^{k\times k}_k(\mathbf{x}) V^T)$. Note that 
\begin{equation}\label{eq-p-uxv}
p_{i,j} = \sum_{l=1}^k u_{i,l}x_l v_{j,l}.
\end{equation} By the Chain Rule, we have that, for $l=1,\dots,k$, $i=1,\dots,m$ and $j=1,\dots,n$, 
\begin{equation}\label{eq-partial-dev}
\frac{\partial f}{\partial u_{i,l}} = \sum_{j=1}^n \frac{\partial f}{\partial p_{i,j}}x_l v_{j,l}, \hspace{1pc}\frac{\partial f}{\partial v_{j,l}} = \sum_{i=1}^m \frac{\partial f}{\partial p_{i,j}}x_l u_{i,l} \hspace{1pc} \text{and}\hspace{1pc} \frac{\partial f}{\partial x_{l}} = \sum_{i=1}^m \sum_{j=1}^n\frac{\partial f}{\partial p_{i,j}} u_{i,l}v_{j,l}.
\end{equation}

Let us compute the gradient of $g_{\eta,\gamma}$ next.

\begin{corollary}\label{cor-random-g-gradient-RWLRA-1}
Define the matrices
\begin{eqnarray*}
\nabla_U \hat{f}_{\tau,\gamma} & = & \left[\frac{\partial \hat{f}_{\tau,\gamma}}{\partial u_{i,l}}\right]_{m\times k}= \left[-2\delta_{\tau,i} (a_{\tau,\gamma}-p_{\tau,\gamma})x_l v_{\gamma,l}\right]_{m\times k}, \\
\nabla_V \hat{f}_{\tau,\gamma} & = & \left[\frac{\partial \hat{f}_{\tau,\gamma}}{\partial v_{j,l}}\right]_{n\times k} =  \left[-2\delta_{\gamma,j} (a_{\tau,\gamma}-p_{\tau,\gamma})x_l u_{\tau,l}\right]_{n\times k}, \\
\nabla_{\mathbf{x}} \hat{f}_{\tau,\gamma} & = & \left[\begin{array}{c}
\frac{\partial f_{\tau,\gamma}}{\partial x_{1}} \\
\frac{\partial f_{\tau,\gamma}}{\partial x_{2}} \\
\vdots \\
\frac{\partial f_{\tau,\gamma}}{\partial x_{k}}
\end{array}\right]
= \left[\begin{array}{c}
-2 (a_{\tau,\gamma}-p_{\tau,\gamma})u_{\tau,1} v_{\gamma,1} \\
-2 (a_{\tau,\gamma}-p_{\tau,\gamma})u_{\tau,2} v_{\gamma,2} \\
\vdots \\
-2 (a_{\tau,\gamma}-p_{\tau,\gamma})u_{\tau,k} v_{\gamma,k}
\end{array}\right]
\end{eqnarray*}
Then the gradient of $g_{\tau,\gamma}$ at any $(U,\mathbf{x},V) \in V_k(\R^m)\times \R^k \times V_k(\R^n)$ is 
\begin{equation}\label{eq-g-gradient-RWLRA-1}
\nabla g_{\tau,\gamma}(U,\mathbf{x},V) = (\Pi_U(\nabla_U \hat{f}_{\tau,\gamma}), \nabla_{\mathbf{x}} \hat{f}_{\tau,\gamma} + 2\lambda \mathbf{x}, \Pi_V(\nabla_V \hat{f}_{\tau,\gamma})) \in T_U V_k(\R^m)\times \R^k \times T_V V_k(\R^n),
\end{equation}
where $\Pi_X(\xi)= (I_n -XX^T)\xi +\frac{1}{2}X(X^T\xi-\xi^TX)$ is the orthogonal projection given in Lemma \ref{lemma-stiefel}.
\end{corollary}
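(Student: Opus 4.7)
The plan is to compute the ambient Euclidean gradient of $g_{\tau,\gamma}$ on $\R^{m\times k}\times \R^k \times \R^{n\times k}$ and then project it onto the tangent space of the submanifold $V_k(\R^m)\times \R^k \times V_k(\R^n)$ using Lemma \ref{lemma-gradient-submfd}. Because the product metric is used and $\R^k$ is an open submanifold of itself, the orthogonal projection onto $T_U V_k(\R^m)\times \R^k \times T_V V_k(\R^n)$ splits as $(\Pi_U, \id_{\R^k}, \Pi_V)$, where $\Pi_U$ and $\Pi_V$ are the orthogonal projections from Lemma \ref{lemma-stiefel}. Thus it suffices to compute the ambient partial derivatives of $g_{\tau,\gamma}$ with respect to the entries of $U$, $V$ and $\mathbf{x}$, and then apply $(\Pi_U,\id,\Pi_V)$ to the resulting triple.

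The first step is to record that $\hat{f}_{\tau,\gamma}(P)=(a_{\tau,\gamma}-p_{\tau,\gamma})^2$ depends only on the single entry $p_{\tau,\gamma}$, so $\frac{\partial \hat{f}_{\tau,\gamma}}{\partial p_{i,j}} = -2\delta_{\tau,i}\delta_{\gamma,j}(a_{\tau,\gamma}-p_{\tau,\gamma})$. Substituting this into the three chain-rule identities of Equation \eqref{eq-partial-dev} immediately yields the closed-form expressions displayed for $\nabla_U \hat{f}_{\tau,\gamma}$, $\nabla_V \hat{f}_{\tau,\gamma}$ and $\nabla_{\mathbf{x}} \hat{f}_{\tau,\gamma}$; the Kronecker deltas kill all but the single $\tau$-th row (respectively $\gamma$-th row) in the $U$- and $V$-derivatives, and the summation collapses to the single term corresponding to $(\tau,\gamma)$ in the $\mathbf{x}$-derivative.

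Next, using $g_{\tau,\gamma}(U,\mathbf{x},V) = \hat{f}_{\tau,\gamma}(U D^{k\times k}_k(\mathbf{x}) V^T) + \lambda\|\mathbf{x}\|^2$, the extra regularization term $\lambda\|\mathbf{x}\|^2$ contributes $2\lambda\mathbf{x}$ only to the $\mathbf{x}$-component, while the first summand contributes exactly the three matrices above. This gives the ambient gradient
\[
\nabla^{\mathrm{amb}} g_{\tau,\gamma}(U,\mathbf{x},V) = \bigl(\nabla_U \hat{f}_{\tau,\gamma},\ \nabla_{\mathbf{x}} \hat{f}_{\tau,\gamma}+2\lambda\mathbf{x},\ \nabla_V \hat{f}_{\tau,\gamma}\bigr).
\]
Finally, applying Lemma \ref{lemma-gradient-submfd} factor-by-factor and invoking the formula $\Pi_X(\xi) = (I-XX^T)\xi + \tfrac{1}{2}X(X^T\xi-\xi^T X)$ from Lemma \ref{lemma-stiefel} on the $U$- and $V$-slots (and the identity on the $\R^k$ slot) yields Equation \eqref{eq-g-gradient-RWLRA-1}.

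There is no real obstacle here: the argument is a direct chain-rule computation followed by an orthogonal projection. The only point that deserves a brief mention is that the Riemannian metric on the product $V_k(\R^m)\times \R^k \times V_k(\R^n)$ is inherited from the product Euclidean metric on $\R^{m\times k}\times \R^k \times \R^{n\times k}$, which is what guarantees both that the ambient gradient decomposes into the three componentwise gradients above and that the orthogonal projection onto the product tangent space is the product of the three individual orthogonal projections.
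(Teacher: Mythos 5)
Your proposal is correct and follows essentially the same route as the paper: compute $\partial \hat{f}_{\tau,\gamma}/\partial p_{i,j}$, push it through the chain-rule identities of Equation \eqref{eq-partial-dev}, add the $2\lambda\mathbf{x}$ contribution from the regularizer, and then project onto the tangent space via Lemmas \ref{lemma-stiefel} and \ref{lemma-gradient-submfd}. The only difference is that you make explicit the (correct) observation that the orthogonal projection onto the product tangent space factors as $(\Pi_U,\id,\Pi_V)$, which the paper leaves implicit.
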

\begin{proof}
For $(U,\mathbf{x},V)\in \R^{m\times k} \times \R^k \times \R^{n\times k}$, view $\hat{f}_{\tau,\gamma}$ as a composite function of $(U,\mathbf{x},V)$ via $\hat{f}_{\tau,\gamma}=\hat{f}_{\tau,\gamma}(P)$ and $P= U D^{k\times k}_k(\mathbf{x}) V^T$. Then 
\begin{eqnarray*}
\frac{\partial \hat{f}_{\tau,\gamma}}{\partial u_{i,l}} & = & -2\delta_{\tau,i} (a_{\tau,\gamma}-p_{\tau,\gamma})x_l v_{\gamma,l}, \\
\frac{\partial \hat{f}_{\tau,\gamma}}{\partial v_{j,l}} & = & -2\delta_{\gamma,j} (a_{\tau,\gamma}-p_{\tau,\gamma})x_l u_{\tau,l}, \\
\frac{\partial \hat{f}_{\tau,\gamma}}{\partial x_{l}} & = &
-2 (a_{\tau,\gamma}-p_{\tau,\gamma})u_{\tau,l} v_{\gamma,l}.
\end{eqnarray*}
Then, by Lemmas \ref{lemma-stiefel} and \ref{lemma-gradient-submfd}, the gradient of $g_{\tau,\gamma}$ is given by Equation \eqref{eq-g-gradient-RWLRA-1}.
\end{proof}

\subsection{The $\kappa$-confinement}

Next we give a $\kappa$-confinement for the random function $g$.

\begin{definition} \label{def-amax}
For $A=[a_{i,j}] \in \R^{m\times n}$ in Problem \ref{prob-RWLRA-1-reform}, define $a := \max \{a_{i,j}^2 \ | \ (i,j) \in \Omega_{m,n}\}$.
\end{definition}

\begin{definition} \label{def-rho}
The function $\rho : V_k(\R^m)\times \R^k \times V_k(\R^n)\rightarrow \R$ is defined by $\rho(U,\mathbf{x},V) = \| \mathbf{x} \|^2$ for any $(U,\mathbf{x},V) \in V_k(\R^m)\times \R^k \times V_k(\R^n)$.
\end{definition}

We show below that $\rho$ is a $\kappa$-confinement for the random function $g$ as defined in Definition \ref{def-kappa-confinement} with appropriately chosen $\kappa$, $\rho_0$ and $\rho_1$.

\begin{lemma}\cite[Lemma 3.12]{Xu-Yang-Wu-CSGD}  \label{lemma-rho-g-gradient}
Let $\rho: V_k(\R^m)\times \R^k \times V_k(\R^n)\rightarrow \R$ be the function given in Definition \ref{def-rho} and $g_{\tau,\gamma}: V_k(\R^m)\times \R^k \times V_k(\R^n)\rightarrow \R$ the function given in Definition \ref{def-random-functions-tau-gamma-RWLRA-1}. For any $(U,\mathbf{x},V)  \in V_k(\R^m)\times \R^k \times V_k(\R^n)$,
\begin{equation} \label{eq-rho-g-gradient}
\left\langle \nabla \rho(U,\mathbf{x},V), \nabla g_{\tau,\gamma}(U,\mathbf{x},V)\right\rangle = -4(a_{\tau,\gamma}-p_{\tau,\gamma})p_{\tau,\gamma} + 4\lambda \rho(U,\mathbf{x},V) \ge -a + 4\lambda \| \mathbf{x} \|^2.
\end{equation}
In particular, $\left\langle \nabla \rho(U,\mathbf{x},V), \nabla g_{\tau,\gamma}(U,\mathbf{x},V)\right\rangle \geq 0$ if $\rho(U,\mathbf{x},V) \geq \frac{a}{4\lambda}$.
\end{lemma}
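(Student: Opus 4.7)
The plan is to unpack both gradients using the formulas already assembled in the paper and observe that almost everything cancels. Since $\rho(U,\mathbf{x},V) = \|\mathbf{x}\|^2$ depends only on the middle factor, its ambient gradient on $\R^{m\times k}\times\R^k\times\R^{n\times k}$ is $(0_{m\times k}, 2\mathbf{x}, 0_{n\times k})$. Applying Lemma \ref{lemma-gradient-submfd} together with the projection formula from Lemma \ref{lemma-stiefel} (projections of zero being zero) leaves the Riemannian gradient as $\nabla\rho(U,\mathbf{x},V) = (0, 2\mathbf{x}, 0) \in T_U V_k(\R^m)\times \R^k \times T_V V_k(\R^n)$.

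Next, substituting this expression together with the formula for $\nabla g_{\tau,\gamma}$ from Corollary \ref{cor-random-g-gradient-RWLRA-1} into the product inner product, only the middle component contributes, yielding
\[
\left\langle \nabla\rho,\nabla g_{\tau,\gamma}\right\rangle = \left\langle 2\mathbf{x},\, \nabla_{\mathbf{x}} \hat{f}_{\tau,\gamma} + 2\lambda\mathbf{x}\right\rangle = 2\left\langle \mathbf{x},\nabla_{\mathbf{x}}\hat{f}_{\tau,\gamma}\right\rangle + 4\lambda\|\mathbf{x}\|^2.
\]
Using the column-vector expression for $\nabla_{\mathbf{x}}\hat{f}_{\tau,\gamma}$ in Corollary \ref{cor-random-g-gradient-RWLRA-1}, I would contract with $\mathbf{x}$ to obtain $\langle\mathbf{x},\nabla_{\mathbf{x}}\hat{f}_{\tau,\gamma}\rangle = -2(a_{\tau,\gamma}-p_{\tau,\gamma})\sum_{l=1}^k x_l u_{\tau,l} v_{\gamma,l}$, and then invoke Equation \eqref{eq-p-uxv} to identify the sum as $p_{\tau,\gamma}$. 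This delivers the equality $-4(a_{\tau,\gamma}-p_{\tau,\gamma})p_{\tau,\gamma}+4\lambda\rho(U,\mathbf{x},V)$.

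For the inequality, I would employ the completion-of-the-square identity $4(a_{\tau,\gamma}-p_{\tau,\gamma})p_{\tau,\gamma} = a_{\tau,\gamma}^2 - (a_{\tau,\gamma}-2p_{\tau,\gamma})^2 \leq a_{\tau,\gamma}^2 \leq a$, where the last step uses Definition \ref{def-amax}. Negating gives $-4(a_{\tau,\gamma}-p_{\tau,\gamma})p_{\tau,\gamma} \geq -a$, and adding $4\lambda\|\mathbf{x}\|^2$ to both sides completes the proof. The ``in particular'' clause is then an immediate algebraic consequence of $\rho(U,\mathbf{x},V) = \|\mathbf{x}\|^2 \geq a/(4\lambda)$.

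I do not anticipate any real obstacle; the argument is essentially a direct computation once the Stiefel projection kills the $U$- and $V$-components of $\nabla\rho$. The one mildly creative step is the square-completion bound $4bc \leq (b+c)^2/1 \cdot \text{(actually } \leq (b+c)^2 - (b-c)^2 \text{ trick)}$ that produces the sharp constant $a_{\tau,\gamma}^2$ rather than some softer, nonuniform bound.
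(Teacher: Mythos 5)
Your proof is correct and is essentially the computation the paper relies on: the paper itself defers the proof to \cite[Lemma 3.12]{Xu-Yang-Wu-CSGD}, but your route (project the ambient gradient $(0,2\mathbf{x},0)$, contract the middle components, identify $\sum_l u_{\tau,l}x_lv_{\gamma,l}=p_{\tau,\gamma}$ via Equation \eqref{eq-p-uxv}, and bound $4(a_{\tau,\gamma}-p_{\tau,\gamma})p_{\tau,\gamma}=a_{\tau,\gamma}^2-(a_{\tau,\gamma}-2p_{\tau,\gamma})^2\le a$) is exactly the argument the paper reuses later in the proof of Lemma \ref{lemma-rho0-condition}. No gaps; only the garbled aside about the square-completion identity in your final paragraph should be cleaned up, since the identity as you actually applied it is the correct one.
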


\begin{lemma}\label{lemma-hessian}
Let $\rho: V_k(\R^m)\times \R^k \times V_k(\R^n)\rightarrow \R$ be the function given in Definition \ref{def-rho} and $R$ the retraction given in Definition \ref{def-retraction-GS-prod}. For any $(U,\mathbf{x},V)  \in V_k(\R^m)\times \R^k \times V_k(\R^n)$ and any $(Y',\hat{\mathbf{x}}',Z') \in  T_{U}V_k(\R^m)\times \R^k \times T_{V}V_k(\R^n)$, $ \Hess(\rho \circ R_{(U,\mathbf{x},V)})|_{(Y',\hat{\mathbf{x}}',Z')} $ is independent of $(Y',\hat{\mathbf{x}}',Z')$. Moreover,
\begin{equation}\label{eq-hessian}
\Hess(\rho \circ R_{(U,\mathbf{x},V)}) (\nabla g_{\tau,\gamma}(U,\mathbf{x},V) ,\nabla g_{\tau,\gamma}(U,\mathbf{x},V))
= 8\sum_{l=1}^k (-(a_{\tau,\gamma}-p_{\tau,\gamma})u_{\tau,l} v_{\gamma,l} + \lambda x_l)^2.
\end{equation}
\end{lemma}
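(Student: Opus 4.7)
The plan is to exploit the fact that $\rho$ depends only on the middle $\R^k$-factor and that the retraction $R$ acts on that factor by pure translation $\mathbf{x}\mapsto \mathbf{x}+\hat{\mathbf{x}}$. Writing out the composition, we have
\[
(\rho\circ R_{(U,\mathbf{x},V)})(Y,\hat{\mathbf{x}},Z) \;=\; \|\mathbf{x}+\hat{\mathbf{x}}\|^{2},
\]
which is independent of $Y\in T_U V_k(\R^m)$ and $Z\in T_V V_k(\R^n)$ and is a quadratic polynomial in $\hat{\mathbf{x}}$. Consequently its Hessian, viewed as a symmetric bilinear form on the inner product space $T_U V_k(\R^m)\times \R^k\times T_V V_k(\R^n)$, is literally constant: for any two tangent triples $(Y_1,\hat{\mathbf{x}}_1,Z_1)$ and $(Y_2,\hat{\mathbf{x}}_2,Z_2)$,
\[
\Hess(\rho\circ R_{(U,\mathbf{x},V)})\bigl((Y_1,\hat{\mathbf{x}}_1,Z_1),(Y_2,\hat{\mathbf{x}}_2,Z_2)\bigr) \;=\; 2\,\langle \hat{\mathbf{x}}_1,\hat{\mathbf{x}}_2\rangle.
\]
This immediately gives the first assertion of the lemma, namely that $\Hess(\rho\circ R_{(U,\mathbf{x},V)})|_{(Y',\hat{\mathbf{x}}',Z')}$ does not depend on the base point $(Y',\hat{\mathbf{x}}',Z')$.

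For the second assertion, I would extract the middle $\R^k$-component of $\nabla g_{\tau,\gamma}(U,\mathbf{x},V)$ from Corollary \ref{cor-random-g-gradient-RWLRA-1}: it equals $\nabla_{\mathbf{x}}\hat{f}_{\tau,\gamma}+2\lambda\mathbf{x}$, whose $l$th entry is
\[
-2(a_{\tau,\gamma}-p_{\tau,\gamma})\,u_{\tau,l}\,v_{\gamma,l}+2\lambda x_l \;=\; 2\bigl(-(a_{\tau,\gamma}-p_{\tau,\gamma})u_{\tau,l}v_{\gamma,l}+\lambda x_l\bigr).
\]
Plugging this $\hat{\mathbf{x}}$-component into the bilinear form above (the $Y$- and $Z$-components contribute nothing) yields
\[
\Hess(\rho\circ R_{(U,\mathbf{x},V)})(\nabla g_{\tau,\gamma},\nabla g_{\tau,\gamma}) = 2\sum_{l=1}^{k}\bigl(2(-(a_{\tau,\gamma}-p_{\tau,\gamma})u_{\tau,l}v_{\gamma,l}+\lambda x_l)\bigr)^{2},
\]
which simplifies to the claimed $8\sum_{l=1}^k(-(a_{\tau,\gamma}-p_{\tau,\gamma})u_{\tau,l}v_{\gamma,l}+\lambda x_l)^2$.

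There is really no obstacle here; the only subtle point is justifying that the $Y$- and $Z$-directions contribute nothing to the Hessian, but this is immediate from the fact that $\rho\circ R_{(U,\mathbf{x},V)}$ is literally constant in those variables (so partial derivatives of every order in those directions vanish, and the mixed second derivatives also vanish). I would state this observation once at the beginning to reduce the computation to the standard quadratic $\|\mathbf{x}+\hat{\mathbf{x}}\|^2$ on $\R^k$, and then finish with the one-line substitution above.
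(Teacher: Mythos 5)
Your proof is correct and follows essentially the same route as the paper's: both identify $(\rho\circ R_{(U,\mathbf{x},V)})(Y,\hat{\mathbf{x}},Z)=\|\mathbf{x}+\hat{\mathbf{x}}\|^2$ as a quadratic in $\hat{\mathbf{x}}$ alone, conclude the Hessian is the constant form $2\langle\hat{\mathbf{x}}_1,\hat{\mathbf{x}}_2\rangle$, and then substitute the $\R^k$-component of $\nabla g_{\tau,\gamma}$ from Corollary \ref{cor-random-g-gradient-RWLRA-1} to obtain the factor of $8$.
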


\begin{proof}
Note that $(\rho \circ R_{(U,\mathbf{x},V)})(Y,\hat{\mathbf{x}},Z)=\|\mathbf{x}+\hat{\mathbf{x}}\|^2$ for any $(U,\mathbf{x},V)  \in V_k(\R^m)\times \R^k \times V_k(\R^n)$ and any $(Y,\hat{\mathbf{x}},Z) \in  T_{U}V_k(\R^m)\times \R^k \times T_{V}V_k(\R^n)$. So $\Hess(\rho \circ R_{(U,\mathbf{x},V)})|_{(Y',\hat{\mathbf{x}}',Z')}((Y,\hat{\mathbf{x}},Z),(Y,\hat{\mathbf{x}},Z))=2\|\hat{\mathbf{x}}\|^2$ for any $(U,\mathbf{x},V)  \in V_k(\R^m)\times \R^k \times V_k(\R^n)$ and any $(Y,\hat{\mathbf{x}},Z), ~(Y',\hat{\mathbf{x}}',Z') \in  T_{U}V_k(\R^m)\times \R^k \times T_{V}V_k(\R^n)$. In particular, it is independent of $(Y',\hat{\mathbf{x}}',Z')$. Thus by Equation \eqref{eq-g-gradient-RWLRA-1}, for any $(U,\mathbf{x},V)  \in V_k(\R^m)\times \R^k \times V_k(\R^n)$, we know that $\Hess(\rho \circ R_{(U,\mathbf{x},V)}) (\nabla g_{\tau,\gamma}(U,\mathbf{x},V) ,\nabla g_{\tau,\gamma}(U,\mathbf{x},V))= 2\|\nabla_{\mathbf{x}} g_{\tau,\gamma} \|^2 =2\|\nabla_{\mathbf{x}} \hat{f}_{\tau,\gamma} + 2\lambda\mathbf{x}\|^2$. Combining this with Corollary \ref{cor-random-g-gradient-RWLRA-1}, we get Equation \eqref{eq-hessian}.
\end{proof}

\begin{lemma} \label{lemma-rho0-condition}
Let $\rho: V_k(\R^m)\times \R^k \times V_k(\R^n)\rightarrow \R$ be the function given in Definition \ref{def-rho} and $R$ be the retraction given in Definition \ref{def-retraction-GS-prod}. 
We have that, for every $(U,\mathbf{x},V) \in V_k(\R^m)\times \R^k \times V_k(\R^n) $,
\begin{equation} \label{eq-kappa-conf-hess-con}
\left\langle \nabla \rho(U,\mathbf{x},V), \nabla g_{\tau,\gamma}(U,\mathbf{x},V)\right\rangle \ge \frac{\kappa}{2} | \Hess(\rho \circ R_{(U,\mathbf{x},V)}) (\nabla g_{\tau,\gamma}(U,\mathbf{x},V) ,\nabla g_{\tau,\gamma}(U,\mathbf{x},V)) |
\end{equation}
if the positive real number $\kappa$ satisfies 
\begin{equation} \label{eq-kappa-rho0-condition}
-a + 4\lambda \|\mathbf{x}\|^2 \ge 8\kappa \left(2ka + 2k \|\mathbf{x}\|^2 + \lambda^2 \|\mathbf{x}\|^2 \right).
\end{equation}
\end{lemma}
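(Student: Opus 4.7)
The plan is to prove inequality \eqref{eq-kappa-conf-hess-con} pointwise by bounding the two sides separately and then chaining them through the hypothesis \eqref{eq-kappa-rho0-condition}. Lemma~\ref{lemma-rho-g-gradient} already delivers the lower bound $\langle \nabla \rho(U,\mathbf{x},V), \nabla g_{\tau,\gamma}(U,\mathbf{x},V)\rangle \geq -a + 4\lambda \|\mathbf{x}\|^2$, so the only remaining work is to upper-bound the Hessian expression from Lemma~\ref{lemma-hessian}.

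Starting from equation \eqref{eq-hessian}, I would apply the elementary inequality $(A+B)^2 \leq 2A^2 + 2B^2$ to each summand, which yields
\[
|\Hess(\rho\circ R_{(U,\mathbf{x},V)})(\nabla g_{\tau,\gamma}, \nabla g_{\tau,\gamma})| \;\leq\; 16(a_{\tau,\gamma}-p_{\tau,\gamma})^2 \sum_{l=1}^k u_{\tau,l}^2 v_{\gamma,l}^2 \;+\; 16\lambda^2 \|\mathbf{x}\|^2.
\]
Two ingredients from the Stiefel structure then complete the estimation. First, since the columns of $U \in V_k(\R^m)$ and $V \in V_k(\R^n)$ are orthonormal vectors, each coordinate obeys $|u_{\tau,l}|\leq 1$ and $|v_{\gamma,l}|\leq 1$, so $\sum_{l=1}^k u_{\tau,l}^2 v_{\gamma,l}^2 \leq k$. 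Second, applying Cauchy--Schwarz to the expansion $p_{\tau,\gamma} = \sum_{l=1}^k u_{\tau,l} v_{\gamma,l} x_l$ and using $\sum_l u_{\tau,l}^2 \leq 1$ (a diagonal entry of the orthogonal projection $UU^T$), I get $p_{\tau,\gamma}^2 \leq \|\mathbf{x}\|^2$ and hence $(a_{\tau,\gamma}-p_{\tau,\gamma})^2 \leq 2a_{\tau,\gamma}^2 + 2p_{\tau,\gamma}^2 \leq 2a + 2\|\mathbf{x}\|^2$.

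Substituting these two bounds gives $|\Hess(\rho\circ R_{(U,\mathbf{x},V)})(\nabla g_{\tau,\gamma}, \nabla g_{\tau,\gamma})| \leq 16(2ka + 2k\|\mathbf{x}\|^2 + \lambda^2\|\mathbf{x}\|^2)$, which after multiplying by $\kappa/2$ is precisely the right-hand side of \eqref{eq-kappa-rho0-condition}. Combined with the lower bound from Lemma~\ref{lemma-rho-g-gradient}, the hypothesis \eqref{eq-kappa-rho0-condition} forces \eqref{eq-kappa-conf-hess-con}, finishing the argument.

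There is no real obstacle here; the proof is a direct calculation built on top of Lemmas~\ref{lemma-rho-g-gradient} and \ref{lemma-hessian} together with the unit-norm property of Stiefel columns. The only bookkeeping point worth flagging is that one must pair the \emph{loose} bound $\sum_l u_{\tau,l}^2 v_{\gamma,l}^2 \leq k$ with the \emph{tight} bound $p_{\tau,\gamma}^2 \leq \|\mathbf{x}\|^2$ in order to produce exactly the coefficients $2ka$, $2k\|\mathbf{x}\|^2$, and $\lambda^2 \|\mathbf{x}\|^2$ that appear on the right-hand side of \eqref{eq-kappa-rho0-condition}.
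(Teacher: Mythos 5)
Your proposal is correct and follows essentially the same route as the paper: bound the Hessian via $(A+B)^2\leq 2A^2+2B^2$, the Stiefel column orthonormality, and $p_{\tau,\gamma}^2\leq\|\mathbf{x}\|^2$, then chain with the lower bound from Lemma~\ref{lemma-rho-g-gradient} through hypothesis \eqref{eq-kappa-rho0-condition}. The only cosmetic difference is that you justify $p_{\tau,\gamma}^2\leq\|\mathbf{x}\|^2$ by Cauchy--Schwarz whereas the paper implicitly uses $\|P\|_F=\|\mathbf{x}\|$; both are fine.
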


\begin{proof}
From Equation \eqref{eq-hessian}, we have
\begin{eqnarray} 
&&\Hess(\rho \circ R_{(U,\mathbf{x},V)}) (\nabla g_{\tau,\gamma}(U,\mathbf{x},V) ,\nabla g_{\tau,\gamma}(U,\mathbf{x},V)) \nonumber \\
& = & 8\sum_{l=1}^k (-(a_{\tau,\gamma}-p_{\tau,\gamma})u_{\tau,l} v_{\gamma,l} + \lambda x_l)^2 \le  8\sum_{l=1}^k \left( 2(a_{\tau,\gamma}-p_{\tau,\gamma})^2 u_{\tau,l}^2 v_{\gamma,l}^2 + 2\lambda^2 x_l^2 \right) \nonumber \\
& \le & 16\sum_{l=1}^k (a_{\tau,\gamma}-p_{\tau,\gamma})^2 + 16\lambda^2 \sum_{l=1}^k x_l^2 = 16\lambda^2 \| \mathbf{x} \|^2 + 16\sum_{l=1}^k (a_{\tau,\gamma}-p_{\tau,\gamma})^2 \nonumber \\
& \le & 16\lambda^2 \| \mathbf{x} \|^2 + 16\sum_{l=1}^k 2\left( a_{\tau,\gamma}^2 + p_{\tau,\gamma}^2 \right) \le 16\lambda^2 \| \mathbf{x} \|^2 + 16\sum_{l=1}^k 2\left( a + \| \mathbf{x} \|^2 \right) \nonumber \\
& = & 16 \left( 2ka + 2k \| \mathbf{x} \|^2 + \lambda^2 \| \mathbf{x} \|^2 \right). \label{eq-hess-upbd}
\end{eqnarray}
Then from Equation \eqref{eq-rho-g-gradient} and Inequality \eqref{eq-kappa-rho0-condition}, we have
\begin{eqnarray*}
&&\left\langle \nabla \rho(U,\mathbf{x},V), \nabla g_{\tau,\gamma}(U,\mathbf{x},V)\right\rangle \\
& = & -4(a_{\tau,\gamma}-p_{\tau,\gamma})p_{\tau,\gamma} + 4\lambda \| \mathbf{x} \|^2 \geq -a_{\tau,\gamma}^2 + 4\lambda \| \mathbf{x} \|^2 \\
& \ge & -a + 4\lambda \| \mathbf{x} \|^2 \ge 8\kappa \left( 2ka + 2k \| \mathbf{x} \|^2 + \lambda^2 \| \mathbf{x} \|^2 \right)\\
& \ge & \frac{\kappa}{2} | \Hess(\rho \circ R_{(U,\mathbf{x},V)}) (\nabla g_{\tau,\gamma}(U,\mathbf{x},V) ,\nabla g_{\tau,\gamma}(U,\mathbf{x},V)) |.
\end{eqnarray*}
\end{proof}

\begin{lemma} \label{lemma-rho1-condition}
Let $\rho: V_k(\R^m)\times \R^k \times V_k(\R^n)\rightarrow \R$ be the function given in Definition \ref{def-rho} and $R$ the retraction given in Definition \ref{def-retraction-GS-prod}. 
We have
\begin{equation} \label{eq-rho-R-le-rho1}
\rho(R_{(U,\mathbf{x},V)}(-s\nabla g_{\tau,\gamma}(U,\mathbf{x},V)))\leq \rho_1
\end{equation}
for every $(U,\mathbf{x},V) \in V_k(\R^m)\times \R^k \times V_k(\R^n) $ satisfying $ \|\mathbf{x}\|^2 \le \rho_0$ and for every $s \in [0,\kappa]$, if the positive real numbers $\rho_0$, $\rho_1$ and $\kappa$ satisfy that
\begin{equation} \label{eq-kappa-rho1-condition}
\rho_0 + a\kappa + (16ka + 8\lambda^2\rho_0 + 16k\rho_0)\kappa^2 \le \rho_1.
\end{equation}
\end{lemma}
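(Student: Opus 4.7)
The plan is to expand $\rho(R_{(U,\mathbf{x},V)}(-s \nabla g_{\tau,\gamma}(U,\mathbf{x},V)))$ by a second-order Taylor expansion via Lemma \ref{lemma-Taylor}, and then control each term using estimates already assembled above. Concretely, there exists $s^\star \in [0,s]$ with
\[
\rho(R_{(U,\mathbf{x},V)}(-s\nabla g_{\tau,\gamma})) = \rho(U,\mathbf{x},V) - s \langle \nabla \rho,\nabla g_{\tau,\gamma}\rangle + \tfrac{1}{2}s^2 \Hess(\rho \circ R_{(U,\mathbf{x},V)})|_{-s^\star \nabla g_{\tau,\gamma}}(\nabla g_{\tau,\gamma},\nabla g_{\tau,\gamma}),
\]
and I will separately upper-bound each of the three summands.

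For the zeroth-order term, the hypothesis $\|\mathbf{x}\|^2 \leq \rho_0$ gives $\rho(U,\mathbf{x},V) \leq \rho_0$ directly. For the first-order term I will invoke Lemma \ref{lemma-rho-g-gradient}, which yields $\langle \nabla \rho,\nabla g_{\tau,\gamma}\rangle \geq -a + 4\lambda\|\mathbf{x}\|^2$; hence $-s\langle \nabla \rho,\nabla g_{\tau,\gamma}\rangle \leq s(a - 4\lambda\|\mathbf{x}\|^2) \leq \kappa a$, where the non-positive piece $-4\lambda s\|\mathbf{x}\|^2$ can be discarded without loss (and if the original expression is already negative, the bound is trivial).

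For the second-order term, Lemma \ref{lemma-hessian} establishes that $\Hess(\rho \circ R_{(U,\mathbf{x},V)})$ is independent of the base point at which it is evaluated, so in particular its value at $-s^\star \nabla g_{\tau,\gamma}$ equals its value at the origin. The chain of inequalities \eqref{eq-hess-upbd} from the proof of Lemma \ref{lemma-rho0-condition} then bounds this quantity by $16(2ka + 2k\|\mathbf{x}\|^2 + \lambda^2\|\mathbf{x}\|^2)$, which under $\|\mathbf{x}\|^2 \leq \rho_0$ and $s\leq \kappa$ gives a contribution of at most $\kappa^2(16ka + 16k\rho_0 + 8\lambda^2\rho_0)$.

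Summing the three estimates produces $\rho_0 + a\kappa + (16ka + 8\lambda^2\rho_0 + 16k\rho_0)\kappa^2$, which is $\leq \rho_1$ by the standing hypothesis \eqref{eq-kappa-rho1-condition}, finishing the proof. I do not anticipate any substantive obstacle: the two ingredients that do the real work (the lower bound on $\langle \nabla\rho,\nabla g_{\tau,\gamma}\rangle$ and the upper bound on the Hessian) have already been proved, and the Hessian's independence of the evaluation point removes the only subtlety one might have expected from a Taylor expansion on a manifold. The only minor care needed is the sign discussion in the first-order step.
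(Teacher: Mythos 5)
Your proposal is correct and follows essentially the same route as the paper: a second-order Taylor expansion via Lemma \ref{lemma-Taylor}, the lower bound on $\left\langle \nabla\rho,\nabla g_{\tau,\gamma}\right\rangle$ from Lemma \ref{lemma-rho-g-gradient}, and the Hessian upper bound \eqref{eq-hess-upbd}, combined under $\|\mathbf{x}\|^2\leq\rho_0$ and $s\leq\kappa$. No gaps.
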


\begin{proof}
By Lemma \ref{lemma-Taylor}, for any $(U,\mathbf{x},V)  \in V_k(\R^m)\times \R^k \times V_k(\R^n)$, and any $s \geq 0$, we have that
\begin{eqnarray*}
& \rho(R_{(U,\mathbf{x},V)}(-s\nabla g_{\tau,\gamma}(U,\mathbf{x},V))) = & \rho(U,\mathbf{x},V) -s\left\langle \nabla \rho(U,\mathbf{x},V), \nabla g_{\tau,\gamma}(U,\mathbf{x},V)\right\rangle \\
&& + \frac{s^2}{2}  \Hess(\rho \circ R_{(U,\mathbf{x},V)}) (\nabla g_{\tau,\gamma}(U,\mathbf{x},V) ,\nabla g_{\tau,\gamma}(U,\mathbf{x},V))) .
\end{eqnarray*}
From Equation \eqref{eq-hessian} and Inequality \eqref{eq-hess-upbd}, we have
\begin{equation} \label{eq-hess-est}
\Hess(\rho \circ R_{(U,\mathbf{x},V)}) (\nabla g_{\tau,\gamma}(U,\mathbf{x},V) ,\nabla g_{\tau,\gamma}(U,\mathbf{x},V)) \le 16 \left( 2ka + 2k \| \mathbf{x} \|^2 + \lambda^2 \| \mathbf{x} \|^2 \right).
\end{equation}
Then by Inequality \eqref{eq-rho-g-gradient}, for $ \| \mathbf{x} \|^2 \le \rho_0$ and $s \in [0,\kappa]$, we have
\begin{eqnarray*}
&& \rho(R_{(U,\mathbf{x},V)}(-s\nabla g_{\tau,\gamma}(U,\mathbf{x},V))) \\
& \le & \| \mathbf{x} \|^2 - s(-a + 4\lambda \| \mathbf{x} \|^2) + 8s^2 \left( 2ka + 2k \| \mathbf{x} \|^2 + \lambda^2 \| \mathbf{x} \|^2 \right) \\
& \le & \| \mathbf{x} \|^2 + as + \left(16ka + 8\lambda^2 \| \mathbf{x} \|^2 + 16k \| \mathbf{x} \|^2 \right)s^2 \\
& \le & \rho_0 + a\kappa + (16ka + 8\lambda^2\rho_0 + 16k\rho_0)\kappa^2 \le \rho_1.
\end{eqnarray*}
\end{proof}

\begin{proposition}\label{prop-rho0-rho1-value}
For any fixed $\lambda>0$, assume the positive real numbers $\rho_0$, $\rho_1$ and $\kappa$ satisfy Inequality  \eqref{eq-kappa-rho1-condition} above and the following inequalities
\begin{eqnarray}
& 0  <  \kappa  <  \frac{\lambda}{4k + 2\lambda^2},  \label{eq-kappa-lambda-constraint} \\
& \rho_0 \ge  \frac{(1+16k\kappa)a}{4\lambda - 16k\kappa - 8\lambda^2 \kappa}.  \label{eq-rho0-def} 
\end{eqnarray}
Then the function $\rho: V_k(\R^m)\times \R^k \times V_k(\R^n)\rightarrow \R$ given in Definition \ref{def-rho} is a $\kappa$-confinement of the random function $g$ on the manifold $V_k(\R^m)\times \R^k \times V_k(\R^n)$.
In particular, $\rho$ satisfies Inequalities \eqref{eq-def-confinement-1} and \eqref{eq-def-confinement-2} for any values of $\rho_0$ and $\rho_1$ satisfying Inequalities \eqref{eq-kappa-rho1-condition} and \eqref{eq-rho0-def}.
\end{proposition}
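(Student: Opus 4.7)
The plan is to verify, one at a time, the three bullet points in Definition \ref{def-kappa-confinement}, using Lemmas \ref{lemma-rho-g-gradient}, \ref{lemma-hessian}, \ref{lemma-rho0-condition} and \ref{lemma-rho1-condition} as black boxes. Since $\rho$ is simply the Euclidean norm squared of the $\R^k$-factor, $\rho$ is $C^\infty$, and the sublevel set $\{\rho \le c\}$ equals $V_k(\R^m) \times \{\mathbf{x}\in\R^k : \|\mathbf{x}\|^2 \le c\} \times V_k(\R^n)$. Since $V_k(\R^m)$ and $V_k(\R^n)$ are compact and the closed ball in $\R^k$ is compact (or empty when $c<0$), the sublevel set is compact. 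This disposes of the first bullet.

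The second bullet, Inequality \eqref{eq-def-confinement-1}, is literally the conclusion of Lemma \ref{lemma-rho1-condition} once the algebraic constraint \eqref{eq-kappa-rho1-condition} is in force; this is one of the hypotheses of the proposition, so nothing more is needed.

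The interesting step will be the third bullet, Inequality \eqref{eq-def-confinement-2}. By Lemma \ref{lemma-rho0-condition}, it suffices to show that
\[
-a + 4\lambda\|\mathbf{x}\|^2 \ \ge\ 8\kappa\bigl(2ka + 2k\|\mathbf{x}\|^2 + \lambda^2\|\mathbf{x}\|^2\bigr)
\]
whenever $(U,\mathbf{x},V)$ satisfies $\rho_0 \le \|\mathbf{x}\|^2 \le \rho_1$. I would rearrange this into
\[
(4\lambda - 16k\kappa - 8\lambda^2\kappa)\,\|\mathbf{x}\|^2 \ \ge\ (1 + 16k\kappa)\,a,
\]
observe that Inequality \eqref{eq-kappa-lambda-constraint} forces the coefficient $4\lambda - 16k\kappa - 8\lambda^2\kappa$ on the left to be strictly positive, and then divide through to obtain the equivalent statement
\[
\|\mathbf{x}\|^2 \ \ge\ \frac{(1+16k\kappa)\,a}{4\lambda - 16k\kappa - 8\lambda^2\kappa}.
\]
This is exactly what hypothesis \eqref{eq-rho0-def} says about $\rho_0$, and we assumed $\|\mathbf{x}\|^2 \ge \rho_0$, so the inequality holds. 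Feeding this back into Lemma \ref{lemma-rho0-condition} yields \eqref{eq-def-confinement-2}, completing the verification.

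The main obstacle is really only bookkeeping: the heavy lifting (the Taylor expansion, the formula for $\nabla g_{\tau,\gamma}$, the explicit Hessian, and the two sign inequalities) has already been absorbed into Lemmas \ref{lemma-rho0-condition} and \ref{lemma-rho1-condition}. What this proposition does is repackage their conclusions into decoupled constraints on $\kappa$ and $\rho_0$ (with $\rho_1$ then determined by \eqref{eq-kappa-rho1-condition}); the only subtlety is ensuring that the denominator in the bound for $\rho_0$ is positive, which is precisely why Inequality \eqref{eq-kappa-lambda-constraint} appears.
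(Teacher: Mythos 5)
Your proof is correct and follows essentially the same route as the paper's: verify the three requirements of Definition \ref{def-kappa-confinement} directly, with compactness of sublevel sets from compactness of the Stiefel factors, \eqref{eq-def-confinement-1} delegated to Lemma \ref{lemma-rho1-condition}, and \eqref{eq-def-confinement-2} delegated to Lemma \ref{lemma-rho0-condition}. Your treatment of the last step is in fact slightly cleaner than the paper's: you rearrange \eqref{eq-kappa-rho0-condition} as a lower bound on $\|\mathbf{x}\|^2$ with a positive coefficient, so monotonicity in $\|\mathbf{x}\|^2$ is immediate, whereas the paper isolates $\kappa$ and argues that the rational function $\frac{4\lambda t - a}{(16k+8\lambda^2)t+16ka}$ is increasing for $t \ge \frac{a}{4\lambda}$. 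The one item you omit is the requirement $\rho_0 < \rho_1$ in Definition \ref{def-kappa-confinement}; this follows at once from \eqref{eq-kappa-rho1-condition} together with $\kappa, \rho_0 > 0$, since $\rho_1 \ge \rho_0 + a\kappa + (16ka + 8\lambda^2\rho_0 + 16k\rho_0)\kappa^2 > \rho_0$, and you should add that line for completeness.
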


\begin{proof}
For any $r>0$, 
\[
\{(U,\mathbf{x},V)\in V_k(\R^m)\times \R^k \times V_k(\R^n)~\big{|}~\rho(U,\mathbf{x},V) \leq r^2\} = V_k(\R^m)\times \{\mathbf{x}\in \R^k ~\big{|}~\|\mathbf{x}\| \leq r\}\times V_k(\R^n),
\] 
which is compact. 

By Inequalities  \eqref{eq-kappa-rho1-condition} and \eqref{eq-kappa-lambda-constraint}, we have $\rho_1 \ge \rho_0 + a\kappa + (16ka + 8\lambda^2\rho_0 + 16k\rho_0)\kappa^2  > \rho_0$. By Lemma \ref{lemma-rho1-condition}, Inequality \eqref{eq-rho-R-le-rho1} follows from Inequality \eqref{eq-kappa-rho1-condition}. Also, by Inequalities \eqref{eq-kappa-lambda-constraint} and  \eqref{eq-rho0-def}, $\rho_0 \ge \frac{(1+16k\kappa)a}{4\lambda - 16k\kappa - 8\lambda^2 \kappa} \ge \frac{a}{4\lambda} >0$.

Inequality \eqref{eq-rho0-def} is equivalent to 
\begin{equation*}
\kappa \le \frac{4\lambda\rho_0-a}{(16k+8\lambda^2)\rho_0 + 16ka}.
\end{equation*}
Notice that $\frac{4\lambda\|\mathbf{x}\|^2-a}{(16k+8\lambda^2)\|\mathbf{x}\|^2 + 16ka}$ is an increasing function of $\|\mathbf{x}\|^2$ when $\|\mathbf{x}\|^2 \ge \frac{a}{4\lambda}$.
Then 
\begin{equation} \label{eq-kappa-value}
\kappa \le \frac{4\lambda\|\mathbf{x}\|^2-a}{(16k+8\lambda^2)\|\mathbf{x}\|^2 + 16ka}
\end{equation}
for $\rho_0 \le \|\mathbf{x}\|^2 \le \rho_1$.
Moreover, Inequality \eqref{eq-kappa-value} is equivalent to Inequality \eqref{eq-kappa-rho0-condition}.
Thus, by Lemma \ref{lemma-rho0-condition}, Inequality \eqref{eq-kappa-conf-hess-con} follows.
Therefore, $\rho(U,\mathbf{x},V)$ is a $\kappa$-confinement of $g$ according to Definition \ref{def-kappa-confinement}.
\end{proof}

\subsection{The Algorithm}

Now we fix the scalars associated to the $\kappa$-confinement in Assumption \eqref{assumption-confinement-function} of Theorem \ref{thm-Ada-SGD-mfd}.
For $\lambda$ and $k$ given in Problem \ref{prob-RWLRA-1-reform} and $a$ defined in Definition \ref{def-amax}, we fix a $\kappa$ satisfying Inequality \eqref{eq-kappa-lambda-constraint}, and set
\begin{equation} \label{eq-rho0-val}
\rho_0   =  \frac{(1+16k\kappa)a}{4\lambda - 16k\kappa - 8\lambda^2 \kappa}
\end{equation}
which satisfies Inequality \eqref{eq-rho0-def}. 
For $\mathbf{x}_0$ in the initial iterate $(U_0,\mathbf{x}_0,V_0)  \in V_k(\R^m)\times \R^k \times V_k(\R^n)$, we set
\begin{equation} \label{eq-rho1-val}
\rho_1   =  \max \{\| \mathbf{x}_0 \|^2  , \  \rho_0 + a\kappa + (16ka + 8\lambda^2\rho_0 + 16k\rho_0)\kappa^2  \}.
\end{equation}
Thus, Assumption (\ref{assumption-initial value}) in Theorem \ref{thm-Ada-SGD-mfd} and Inequality \eqref{eq-kappa-rho1-condition} are satisfied.
By Proposition \ref{prop-rho0-rho1-value}, $\rho(U,\mathbf{x},V) = \| \mathbf{x} \|^2$ defined in Definition \ref{def-rho} is a $\kappa$-confinement of the random function $g$ for $\kappa$, $\rho_0$ and $\rho_1$ satisfying the conditions in that proposition.

We fix the initial learning rate to be $\eta_0 = \frac{\alpha}{\beta^{\frac{1}{2}+\ve}} = \kappa$, where $\alpha>0$, $0 < \ve \le \frac{1}{2}$. This means that
\begin{equation} \label{eq-beta-val}
\beta =  \left( \frac{\alpha}{\kappa} \right)^{\frac{2}{1+2\ve}}.
\end{equation} 

With the preparations above, we are now ready to present Algorithm \ref{alg-Ada-SGD-RWLRA} for Problem \ref{prob-RWLRA-1-reform}.

\begin{algorithm}[Stochastic Gradient Descent with Adaptive Learning Rates for Problem \ref{prob-RWLRA-1-reform}]\label{alg-Ada-SGD-RWLRA}
Let $g$ be the random function defined in Definition \ref{def-random-functions-tau-gamma-RWLRA-1} and $R$ the retraction defined in Definition \ref{def-retraction-GS-prod}.

\noindent\makebox[\linewidth]{\rule{\textwidth}{1pt}}

\textbf{Input:}   
\begin{itemize}
\item[-] (From Problem \ref{prob-RWLRA-1-reform}) Positive integers $m,~n$ and $k$ satisfying $k \leq \min\{m,n\}$, a scalar $\lambda>0$, matrices of constants $A=[a_{i,j}] \in \R^{m\times n}$ and $W=[w_{i,j}] \in \R^{m\times n}$ such that $\sum_{i=1}^m\sum_{j=1}^n w_{i,j}=1$ and $w_{i,j} \geq 0$ $\forall ~i,j$. 
\item[-] ($\kappa$-Confinement) A scalar $\kappa$ satisfying $0 < \kappa < \frac{\lambda}{4k + 2\lambda^2}$ (Inequality \eqref{eq-kappa-lambda-constraint}) and $\rho_0 > 0$ given by Equation \eqref{eq-rho0-val}.\footnote{$\rho_1$ does not explicitly occur in Algorithm \ref{alg-Ada-SGD-RWLRA}. But with $\rho_1$ given by Equation \eqref{eq-rho1-val}, all the relevant assumptions in Theorem \ref{thm-Ada-SGD-mfd} are satisfied}
\item[-] (Initial Learning Rate) Positive scalars $\alpha>0$, $0 < \ve \le \frac{1}{2}$, and $\beta$ given by Equation \eqref{eq-beta-val}.
\item[-] (Initial Iterate) $(U_0,\mathbf{x}_0,V_0)  \in V_k(\R^m)\times \R^k \times V_k(\R^n)$.

\end{itemize}

\textbf{Output:} Sequence of iterates $\{(U_t,\mathbf{x}_t,V_t)\}_{t=0}^\infty  \subset V_k(\R^m)\times \R^k \times V_k(\R^n)$.
\begin{itemize}
	\item \emph{for $t=1,2\dots$ do}
	\begin{enumerate}[1.]
	\item Select a random element $(\tau_t, \gamma_t)$ from $\Omega_{m, n}$ with probability distribution $\mu$ independent of $\{(\tau_{\iota}, \gamma_{\iota})\}_{\iota=0}^{t-1}$.
	\item Update learning rate as
	\begin{equation}
	\eta_t = \frac{\alpha}{(\beta+\sum_{\iota=0}^{t-1} \|\nabla g_{\tau_\iota,\gamma_\iota}(U_\iota,\mathbf{x}_\iota,V_\iota)\|^2)^{\frac{1}{2}+\ve}},
	\end{equation}
	where $\nabla g_{\tau,\gamma}(U,\mathbf{x},V)$ is computed in Corollary \ref{cor-random-g-gradient-RWLRA-1}.
	\item Set 
	\begin{equation}\label{eq-ada-RWLRA-1-recursion}
	(U_{t+1},\mathbf{x}_{t+1},V_{t+1})= R_{(U_t,\mathbf{x}_t,V_t)}( -\eta_t \nabla g_{\tau_t,\gamma_t}(U_t,\mathbf{x}_t,V_t)).
	\end{equation}
	\end{enumerate}
	\item \emph{end for}
\end{itemize}
\noindent\makebox[\linewidth]{\rule{\textwidth}{1pt}}
\end{algorithm}

\begin{proposition}\label{prop-Ada-SGD-RWLRA-convergent}
Applying Theorem \ref{thm-Ada-SGD-mfd} to Algorithm \ref{alg-Ada-SGD-RWLRA}, we have
\begin{enumerate}
	\item $\{(U_t,\mathbf{x}_t,V_t)\}_{t=0}^\infty$ is contained in the compact subset $K_1= V_k(\R^m)\times \hat{K}_1 \times V_k(\R^n)$ of $V_k(\R^m)\times \R^k \times V_k(\R^n)$, where $\hat{K}_1 =\{ \mathbf{x} \in \R^k : \| \mathbf{x} \| ^2 \le \rho_1 \}$. Therefore, $\{(U_t,\mathbf{x}_t,V_t)\}_{t=0}^\infty$ has convergent subsequences.
	\item $\{\|\nabla G(U_t,\mathbf{x}_t,V_t)\|\}_{t=0}^\infty$ converges almost surely to $0$.
	\item Any limit point of $\{(U_t,\mathbf{x}_t,V_t)\}_{t=0}^\infty$ is almost surely a stationary point of $G$.
\end{enumerate}
\end{proposition}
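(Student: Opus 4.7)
The proof plan is to verify the eight hypotheses of Theorem \ref{thm-Ada-SGD-mfd} in the setting of Algorithm \ref{alg-Ada-SGD-RWLRA} and then transcribe its three conclusions. The manifold $M = V_k(\R^m)\times \R^k \times V_k(\R^n)$ is a product of Riemannian submanifolds of Euclidean space. Since the $\qf$ map is smooth on matrices of linearly independent columns, the retraction $R$ from Definition \ref{def-retraction-GS-prod} is $C^2$, so hypothesis (1) holds. The finite probability space $(\Omega_{m,n},\mu)$ gives hypothesis (2); Lemma \ref{lemma-g-tau-gamma-RWLRA-1-expectation} gives hypothesis (4) with $F=G$; hypothesis (6) is built into Step 1 of the algorithm; hypothesis (7) holds because $\rho_1 \ge \|\mathbf{x}_0\|^2 = \rho(U_0,\mathbf{x}_0,V_0)$ by Equation \eqref{eq-rho1-val}; and hypothesis (8) reduces to $\alpha/\beta^{\frac{1}{2}+\ve}\le \kappa$, which is actually an equality by the choice of $\beta$ in Equation \eqref{eq-beta-val}.

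The random function hypothesis (3) requires slightly more attention. For each fixed $(\tau,\gamma)\in\Omega_{m,n}$, the function $g_{\tau,\gamma}(U,\mathbf{x},V)$ is a polynomial in the entries of its arguments, so it is smooth, giving (3)(i). Because $\Omega_{m,n}$ is a finite set, local boundedness on any compact $K \subset M$ of both $g$ and $\|\nabla_M g\|$ reduces to bounding finitely many continuous functions on a compact set, which gives (3)(ii). For the locally $R$-Lipschitz gradient property (3)(iii), the composite $g_{\tau,\gamma}\circ R_x(\mathbf{v})$ is $C^2$ jointly in $(x,\mathbf{v})$ (a smooth function composed with $C^2$), so its gradient in the $\mathbf{v}$-direction is $C^1$, and a standard mean value argument on the compact set $\{(x,\mathbf{v}): x\in K,\ \|\mathbf{v}\|_x \le r\}$ produces a Lipschitz constant $C_{K,r}^{(\tau,\gamma)}$; taking the maximum over the finite set $\Omega_{m,n}$ gives the required uniform constant $C_{K,r}$.

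Hypothesis (5), the $\kappa$-confinement of $g$ by $\rho$, is exactly the content of Proposition \ref{prop-rho0-rho1-value}, whose hypotheses \eqref{eq-kappa-lambda-constraint}, \eqref{eq-rho0-def}, and \eqref{eq-kappa-rho1-condition} are arranged by the algorithm's input constraint on $\kappa$ together with the definitions of $\rho_0$ and $\rho_1$ in Equations \eqref{eq-rho0-val} and \eqref{eq-rho1-val}. With all eight hypotheses in place, Theorem \ref{thm-Ada-SGD-mfd} yields the three conclusions of the proposition after identifying $K_1 = V_k(\R^m)\times \hat{K}_1 \times V_k(\R^n)$ with $\hat{K}_1=\{\mathbf{x}\in\R^k:\|\mathbf{x}\|^2\le \rho_1\}$, since $\rho$ depends only on $\mathbf{x}$. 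The only part that is more than pure bookkeeping is the uniform-in-$\omega$ local Lipschitz bound in (3)(iii); however, the finiteness of $\Omega_{m,n}$ turns the ``maximum over $\omega$'' step into a genuine maximum of finitely many continuous constants, so no real obstacle arises beyond carefully transcribing the verification.
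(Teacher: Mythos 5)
Your proposal is correct and follows the route the paper takes implicitly: the paper states Proposition \ref{prop-Ada-SGD-RWLRA-convergent} without a separate proof, treating it as a direct application of Theorem \ref{thm-Ada-SGD-mfd} once the preceding constructions (the random function $g$, the retraction $R$ of Definition \ref{def-retraction-GS-prod}, the $\kappa$-confinement from Proposition \ref{prop-rho0-rho1-value}, and the parameter choices in Equations \eqref{eq-rho0-val}--\eqref{eq-beta-val}) are in place. Your explicit verification of the hypotheses --- in particular the locally $R$-Lipschitz gradient property via joint $C^2$-ness, compactness, and the finiteness of $\Omega_{m,n}$, and the observation that \eqref{eq-bound-kappa} holds with equality by the choice of $\beta$ --- correctly fills in the details the paper leaves unstated.
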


\begin{remark} \label{rmk-lmbd}
Observe that, by Inequality \eqref{eq-kappa-lambda-constraint}, we have that $\eta_t \le \eta_0 \le \kappa = O(\lambda)$ as $\lambda \to 0$. If the regularizing scalar $\lambda$ is too small, then the learning rates would be very small and slow down the convergence. This is observed in the numerical results from Section \ref{sec-numerical}.
\end{remark}

\section{Numerical Results}\label{sec-numerical}

Netflix provided a dataset \cite{kaggle-Netflix-Prize:2024} with $100480507$ integer ratings (ranging from $1$ to $5$) given by $480189$ users on $17770$ movies, forming a $480189\times 17770$ data matrix with about $1.17\%$ of its entries observed. 
In this section, we evaluate the performance of Algorithm \ref{alg-Ada-SGD-RWLRA} on the Matrix Completion Problem with a subset of the Netflix data and compare it to that of \cite[Algorithm 3.13]{Xu-Yang-Wu-CSGD}, which is a convergent stochastic gradient descent algorithm on $V_k(\R^m)\times \R^k \times V_k(\R^n)$ with deterministic learning rates. 
It is worth mentioning that \cite[Algorithm 3.13]{Xu-Yang-Wu-CSGD} out performs a convergent stochastic gradient descent algorithm on the Euclidean space $\R^{m\times k} \times \R^{n\times k}$. See \cite[Problem 4.1, Figure 1 and Algorithm D.13]{Xu-Yang-Wu-CSGD}.

Due to our limited computing power, instead of working with the whole data matrix, we work with a randomly sampled submatrix $A = [a_{i,j}] \in \R^{27000\times 1000}$ of the whole data matrix. $A$ has $278338$ observed entries, which is about $1.03\%$ of all of its entries. We fix the low-rank constraint to be $k = 32$. We also fix the matrix of weights to be 
$$W = [w_{i,j}] \in \R^{27000\times 1000}, \text{ where} \ w_{i,j} = \begin{cases} 0 & \text{if } a_{i,j} \text{ is missing,} \\ \frac{1}{278338} &  \text{if } a_{i,j} \text{ is observed.} \end{cases}$$
These choices for $A$, $k$ and $W$ are used for both Algorithm \ref{alg-Ada-SGD-RWLRA} and \cite[Algorithm 3.13]{Xu-Yang-Wu-CSGD}. Denote by $\Omega_{27000,1000}$ the probability space defined in Definition \ref{def-mu-prob-space} with its probability measure given by the weight matrix $W = [w_{i,j}]$. We then randomly sample a sequence $\{(i_t,j_t)\}_{t=0}^{1000}$ of elements of $\Omega_{27000,1000}$ to use for the iterations of both Algorithm \ref{alg-Ada-SGD-RWLRA} and \cite[Algorithm 3.13]{Xu-Yang-Wu-CSGD}.

In Figures \ref{fig-ASGD-vs-CSGD-netflix} and \ref{fig-ASGD-vs-CSGD-netflix-sp}, we refer to 
\begin{itemize}
	\item $\hat{F}$ from Problem \ref{prob-WLRA} as ``the cost function with regularization removed'',
	\item Algorithm \ref{alg-Ada-SGD-RWLRA} as ``the Adaptive SGD'',
	\item\cite[Algorithm 3.13]{Xu-Yang-Wu-CSGD} as ``the Deterministic SGD''.
\end{itemize}
In Figure \ref{fig-ASGD-vs-CSGD-netflix}, We plot $\{ \hat{F}(U_tD_{32}^{32 \times 32}(\mathbf{x}_t)V_t^T) \}_{t=0}^{1000}$ from Problem \ref{prob-WLRA} for both the Adaptive and the Deterministic SGD algorithms for $\lambda = 10^{-4}$, $10^{-6}$ and $10^{-8}$.
\begin{itemize}
	\item For the Adaptive SGD, we fix $\alpha = 10^5$ and $\varepsilon = \frac{1}{64}$.\footnote{It is worth mentioning that the choice of $\alpha$ in Algorithm \ref{alg-Ada-SGD-RWLRA} barely affects the performance of the algorithm when it is chosen properly as a positive number.} Corresponding to the three values of $\lambda$, the confinement scalar $\kappa$ is chosen to be $10^{-7}$, $5 \times 10^{-9}$ and $5 \times 10^{-11}$ to optimize the performance of the algorithm. Note that these choices of $\kappa$ satisfy the condition $0 < \kappa < \frac{\lambda}{128 + 2\lambda^2}$ imposed by Inequality \eqref{eq-kappa-lambda-constraint}.
	\item For the Deterministic SGD, the scalar $K$ is fixed as $10^4$ for all three values of $\lambda$ to optimize the performance of the algorithm.
\end{itemize}

\begin{figure}[ht]
\centering
\includegraphics[width=7cm, height=7cm]{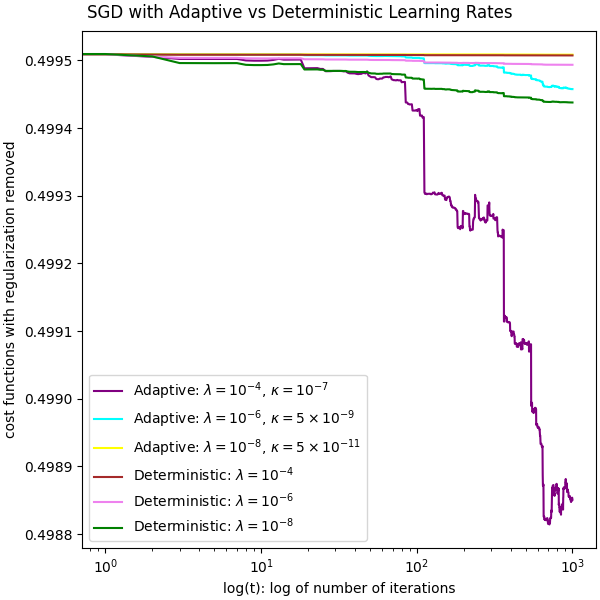}
\caption{The performance profile of the Adaptive and the Deterministic SGD.}
\label{fig-ASGD-vs-CSGD-netflix}
\end{figure}

With these choices of parameters, curves in Figure \ref{fig-ASGD-vs-CSGD-netflix} depict the sequence $\{ \hat{F}(U_tD_{32}^{32 \times 32}(\mathbf{x}_t)V_t^T) \}_{t=0}^{1000}$ over the $\log_{10}$ of iteration $t$. One can see that the Adaptive SGD with $\lambda = 10^{-4}$ and $\kappa = 10^{-7}$ (the purple curve) has the best performance among the six implementations of the two algorithms. Note that the yellow curve (Adaptive SGD: $\lambda = 10^{-8}$ and $\kappa = 5 \times 10^{-11}$) is extremely close to the brown curve (Deterministic SGD: $\lambda = 10^{-4}$). For these two choices of parameters, the two algorithms barely change the value $\hat{F}$. This seems to confirm Remark \ref{rmk-lmbd}.

\begin{figure}[ht]
\centering
\includegraphics[width=15cm, height=5cm]{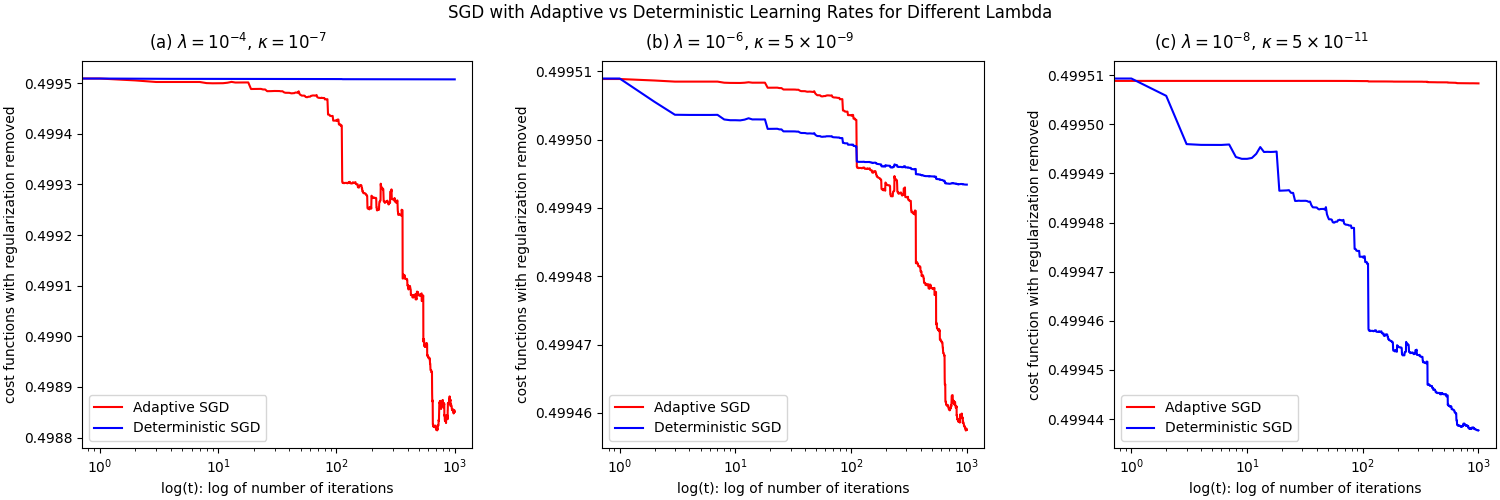}
\caption{The performance profiles of the Adaptive and the Deterministic SGD for different values of $\lambda$.}
\label{fig-ASGD-vs-CSGD-netflix-sp}
\end{figure}

Figure \ref{fig-ASGD-vs-CSGD-netflix-sp} provides a closer look at the same curves in Figure \ref{fig-ASGD-vs-CSGD-netflix} sorted by different $\lambda$ values. 
Once can see that the Adaptive SGD outperforms the Deterministic SGD for $\lambda = 10^{-4}$ and $10^{-6}$, but underperforms the Deterministic SGD for $\lambda = 10^{-8}$. 
This seems to indicate that, as $\lambda \rightarrow 0$, the Adaptive SGD gradually loses its efficientcy relative to the Deterministic SGD.
The Python implementation of the algorithms in this numerical section is available on Github: \url{https://github.com/pqyang9/Adaptive-SGD}.
 
\newpage

\bibliography{rie-opt-refs}    

\end{document}